\documentclass[reqno]{amsart}
\usepackage[utf8]{inputenc}
\usepackage{enumitem}
\usepackage{amsmath,amssymb,amsbsy,amsfonts,amsthm,latexsym,amsopn,amstext,mathtools,leftidx,
            amsxtra,euscript,amscd,verbatim,epsf,colordvi,graphics}
\usepackage{ytableau}
\usepackage{color}
\usepackage{pgf,tikz}
\usetikzlibrary{decorations.pathreplacing}
\usetikzlibrary{arrows}
\usepackage{mathrsfs}
\usepackage{graphicx}

\usepackage{hyperref}
\usepackage{cleveref}
\usepackage[margin=11pt,font=footnotesize,labelfont=bf]{caption}
\newtheoremstyle{tplain}{3pt}{3pt}{\rmfamily}{}{\bfseries}{.}{0.5em}{}
\theoremstyle{tplain}

\usepackage{pgf,tikz}
\usepackage{tcolorbox}
\usepackage[enableskew]{youngtab}
\definecolor{darkgreen}{cmyk}{1,0,1,0}
\newtheorem{thm}{Theorem}
\newtheorem{lem}{Lemma}
\newtheorem{ex}{Example}
\newtheorem{cor}{Corollary}
\newtheorem{prop}{Proposition}
\newtheorem{obs}{Remark}
\newtheorem{defi}{Definition}
\def \cal{\mathcal}
\def \rm {\mathrm}

\def\Z { \mathbb{Z}}
\def\N { \mathbb{N}}
\def\C { \mathbb{C}}
\def\Z { \mathbb{Z}}
\def\C { \mathbb{C}}
\def\R { \mathbb{R}}
\def\LRAII {\mathsf{LR}}
\def \r {\mathbf{r}}
\def \t {\mathbf{t}}
\def \redu {\mathrm{red}}
\def\c { \mathrm{c}}
\def \k {\mathfrak{k}}
\def\wt { \mathrm{wt}}

\def \SST {SST_{2n}}
\def \SpT {SpT_{2n}}
\def \pr {\mathbf{pr}}
\def \s {\mathbf{s}}
\def \i {\overline i}
\def \g {\mathfrak{g}}
\def\domres {\mathrm{domres}}
\def\Hive {\mathrm{Hive}}
\def \res {\mathrm{res}}
\def \v {\mathrm{v}}

\newcommand{\bro}{\color{brown}}

\newcommand{\ora}{\color{orange}}

\newcommand{\oo}{\color{blue}}
\newcommand{\blue}{\color{blue}}
\newcommand{\red}{\color{red}}
\newcommand{\green}{\color{green}}

\makeatletter
\newcommand*\bigcdot{\mathpalette\bigcdot@{.5}}
\newcommand*\bigcdot@[2]{\mathbin{\vcenter{\hbox{\scalebox{#2}{$\m@th#1\bullet$}}}}}
\makeatother

\newcommand*\circled[1]{\tikz[baseline=(char.base)]{
            \node[shape=circle,draw,inner sep=1pt] (char) {#1};}}

\newcommand{\YT}[3]{
\vcenter{\hbox{
\begin{tikzpicture}[x={(0in,-#1)},y={(#1,0in)}] 
\foreach \rowi [count=\i] in {#3} {
 \foreach \e [count=\j] in \rowi {
  \draw (\i,\j) rectangle +(-1,-1);
  \draw (\i-0.5,\j-0.5) node {$#2\e$};
 }
}
\end{tikzpicture}
}}
}

\title[ Quantum Littlewood-Richardson rule and $\widehat{\mathfrak{g}}$-dominant tableaux] {Explicit characterization of $\k$-highest weight and $\widehat{\mathfrak{g}}$-dominant tableaux for $n\le 4$ \\via $1$-$0$-slack recording tableaux in the quantum Littlewood-Richardson rule\\ and  lattice points in  flagged hive polytopes}

\author{Olga Azenhas}
\address{ University of Coimbra, CMUC, Department of Mathematics, Portugal}
\email{oazenhas@mat.uc.pt}

\keywords{k-highest weight tableau,  quantum Littlewood-Richardson rule, dominant tableau, Sundaram branching model, quantum branching model}

\subjclass[2000]{05E05, 05E10, 05E14, 17B37, 68Q17}

\begin{document}

\begin{abstract}We have  previously, for a given positive integer $n$, explicitly characterized by certain linear inequalities the ${\mathfrak{k}}$-highest weight tableaux of  shape length $2n$ or $2n-1$  in the quantum Littlewood-Richardson (LR) rule produced by $1$-$0$-slack recording tableaux. We now extend  that characterization for lower shape lengths than $2n$ or $2n-1$ when the given  $n$ is $3$ or $ 4$. Then using the composition of promotion operators  defining the Naito-Suzuki-Watanabe bijection between ${\mathfrak{k}}$-highest weight tableaux and $\widehat{\mathfrak{g}}$-dominant tableaux, we also explicitly characterize by certain linear inequalities  $\widehat{\mathfrak{g}}$-dominant tableaux for $n= 3$ and $n=4$.
 Since recording tableaux in the quantum Littlewood-Richardson rule are in natural bijection with Littlewood-Richardson-Sundaram (LRS) tableaux, we relate our results on the inverse of the quantum LR rule with other two bijections for the Naito-Sagaki conjecture and establish  bijections  between ${\mathfrak{k}}$-highest weight tableaux, $\widehat{\mathfrak{g}}$-dominant tableaux and the lattice points
in  a   (disjoint) union of  flagged hive polytopes.

\end{abstract}
\maketitle

\tableofcontents

\section{Introduction} The Naito–Sagaki conjecture \cite{naitosagaki} states that the branching rule for the restriction of finite-dimensional, irreducible polynomial representations of $GL_{2n}(\mathbb{C})$ to $Sp_{2n}(\mathbb{C})$  enumerates certain rational paths in the Littelmann's path model \cite{littelmann} satisfying certain dominance conditions. After the study of some cases \cite{naitosagaki, torres}, the conjecture was combinatorially solved by Schumann-Torres \cite{schumanntorres}.
The obtained branching rule establishes a bijection between $\widehat\g$-dominant semi-standard tableaux of shape $\lambda\in Par_{\le 2n}$, partitions with at most $2n$ parts, in the alphabet $\cal{A}_{2n}=\{1<\cdots<2n\}$  with $\widehat\g$-weight $\mu\subset \lambda$ a partition with at most $n$ parts, and the set $LRS_{2n}(\lambda, \mu)$  of
Littlewood–Richardson–Sundaram tableaux of skew shape $\lambda/\mu$. For $\lambda$ a partition  with at most $2n$ parts and $\mu\subset \lambda$ with at most $n$ parts,  those semi-standard dominant tableaux are denoted in \cite{schumanntorres} by $\domres(\lambda,\mu)$, and in \cite{nsw} by $\SST^{{\widehat{\mathfrak{g}}-dom}}(\lambda,\mu)$ and we inter-twin between the two notations.
Here $ {\mathfrak{g}}:={\mathfrak{gl}}_{2n}$ and $\widehat {\mathfrak{g}}=\widehat{\mathfrak{gl}}_{2n}$ is the fixed point  Lie subalgebra
induced  by the  folding of the Dynkin diagram of type $A_{2n-1}$, along the middle vertex, into the Dynkin diagram of type $C_n$, which is isomorphic to the symplectic Lie algebra $\mathfrak{sp}(2n,\C)$ (we refer to \cite{schumanntorres,nsw} for details). More precisely, given  partitions $\lambda$ with at most $2n$ parts and $\mu\subset \lambda$ with at most $n$ parts, Schumann-Torres \cite{schumanntorres}  build on the Sundaram branching model \cite{sundaram,sundaram90} and provide a bijection between the sets
\begin{align}\phi:\domres(\lambda,\mu)\,(\SST^{{\widehat{\mathfrak{g}}-dom}}(\lambda,\mu))\overset{\sim}\longrightarrow  LRS_{2n}(\lambda,\mu)=\bigsqcup_{\nu \mbox{ \scriptsize{even}}}LRS_{2n}(\lambda/\mu,\nu).\label{phi}
\end{align}

Recently Naito-Suzuki-Watanabe \cite{nsw} have given a new proof of the Naito-Sagaki conjecture using another branching model that comes from the quantum Littlewood-Richardson (LR) rule \cite{watanabe},
\begin{align} \label{lrsIIintr}
 \LRAII^{AII}: SST_{2n}(\lambda) \overset{\sim}\longrightarrow
\bigsqcup_{\begin{smallmatrix}\mu\in Par_{\le n}\\
\mu\subseteq\lambda\\
Q\in Rec_{2n}(\lambda/\mu)\end{smallmatrix}}SpT_{2n}(\mu) \times \{Q\},
\end{align}
where $SpT_{2n}(\mu)$ is the set of symplectic tableaux of shape $\mu$, and $LRS_{2n}(\lambda,\mu)\overset{\sim}{\underset{}\longrightarrow} Rec_{2n}(\lambda/\mu)$  are in natural bijection \cite{azreco,watanabe}.
In this framework,
a non-standard realization of the symplectic Lie algebra is  $\k$,  another  certain subalgebra of $\mathfrak{g}$, isomorphic to the symplectic Lie algebra $ \mathfrak{sp}(2n,C)$ which  arises in $\imath$quantum symmetric pairs of type $AII$ \cite{wang,watanabe,nsw}, \cite{csh24}. Based on that quantum branching model by Watanabe they provide two bijections \cite[Theorem 5.18.]{nsw}. One of them, that we shall use in our discussion, asserts, for all $\lambda\in Par_{\le 2n}$ there exists the bijection

\begin{align}\Phi:SST^{\widehat{\mathfrak{g}}-dom}_{2n}(\lambda)\overset{\sim}\longrightarrow SST^{\mathfrak{k}-hw}_{2n}(\lambda) 
\end{align}
 such that for each $T\in SST^{\widehat{\mathfrak{g}}-dom}_{2n}(\lambda)$,
$\wt_{\widehat{\mathfrak{g}}}(T)=\wt_{\widehat{\mathfrak{k}}}(\Phi(T)),
$ where $\Phi$ is a combinatorial bijection expressed as a composition of certain promotion operators \cite[Definition 7.19]{nsw}.
Here, for each $\lambda\in Par_{\le 2n}$,
\begin{align}SST^{\mathfrak{k}-hw}
_{2n}(\lambda):=\bigsqcup_{\begin{smallmatrix}\mu\in Par_{\le n}\\
\mu\subseteq\lambda\end{smallmatrix}} {\LRAII^{AII}}^{-1}(\{S^{H,\mu}\}\times Rec_{2n}(\lambda/\mu)),
\end{align}
is the set of all $\mathfrak{k}$-highest weight tableaux of shape $\lambda$ in $SST_{2n}$, where for each $\mu\subset \lambda$ with at most $n$ parts, $S^{H,\mu}$ is the symplectic tableau of shape and $\k$-weight $\mu$, that is, the symplectic $\k$-highest weight tableau of shape $\mu$ in $SpT_{2n}(\mu)$. In particular,
$$SST^{\mathfrak{k}-hw}
_{2n}(\lambda,\mu):={\LRAII^{AII}}^{-1}(\{S^{H,\mu}\}\times Rec_{2n}(\lambda/\mu)):=\{S\in    SST^{\mathfrak{k}-hw}_{2n}(\lambda)| \wt_{\mathfrak{k}}(S)=\mu\} \subseteq SST_{2n}(\lambda).$$

For $\lambda\in Par_{\le 2n}$, and $\mu\subset \lambda$ with $\mu\in Par_{\le n}$, we consider   for the Naito-Sagaki conjecture, the Schumann-Torres bijection $\phi$ (or $\phi^{-1}$) above \eqref{phi}  and the bijection
$$\daleth:=\Phi^{-1}\circ {\LRAII^{AII}}^{-1}\circ \pi^{-1}\circ \lozenge, \mbox{ such that }  \daleth LRS_{2n}(\lambda,\mu)=SST^{\widehat{\mathfrak{g}}-dom}_{2n}(\lambda,\mu)$$ as below

\begin{align}\label{dale}\daleth: 
LRS_{2n}(\lambda,\mu)\overset{\sim}{\underset{\lozenge}\longrightarrow }Rec_{2n}(\lambda/\mu)\overset{\sim}{\underset{\pi^{-1}}\longrightarrow }\{S^{H,\mu}\}\times Rec_{2n}(\lambda/\mu) \overset{\sim}{\underset{{\LRAII^{AII}}^{-1}}\longrightarrow }SST^{\mathfrak{k}-hw}_{2n}(\lambda,\mu)  \overset{\sim}{\underset{\Phi^{-1}}\longrightarrow}SST^{\widehat{\mathfrak{g}}-dom}_{2n}(\lambda,\mu)
\end{align}
 where $\pi$ is the natural projection, $(S^{H,\mu}, S)\rightarrow S$ for each $S\in Rec_{2n}(\lambda/\mu)$, and  $\lozenge$ is the bijection in \cite[Theorem 1]{azreco}, \cite[Lemma 8.3.2]{watanabe}.  We note that there is another recent bijection \cite{muniz} for the Naito-Sagaki conjecture which does not rely on those two others  that it is not considered here.

Take into account that the Berenstein-Gelfand-Zelevinsky LR model in \cite[Theorem 4.3]{BZphy} restricted to symplectic Gelfand-Tselin patterns is in bijection with  LR-Sundaram tableaux \cite{az26}, and that,  in turn, the set $LRS(\lambda/\mu, \nu)$, $\nu$ even, is in bijection with integer flagged hives whose boundary data is given by the same three partitions \cite{sathishtorres}.
Let $\Hive(\mu,\nu,\lambda,\varphi) $ denote the flagged hive polytope with boundary data $\mu,\nu,\lambda$  and  flag $\varphi$ \cite{krv21}.  Let
$\Hive_{\Z}(\mu,\nu,\lambda,\varphi) $ be its set of integer points, that is, the set of integer flagged hives with flag $\varphi$ whose boundary data is given by the same three partitions.
We  then have the  two chains of bijections
\begin{align}\label{bijectionsnsw}SST^{\widehat{\mathfrak{g}}-dom}_{2n}(\lambda,\mu)\overset{\sim}{\underset{\Phi}\rightarrow } SST^{\mathfrak{k}-hw}_{2n}(\lambda,\mu)  \overset{\sim}{\underset{ \lozenge^{-1}\circ \pi\circ \LRAII^{AII}} \rightarrow } LRS_{2n}(\lambda,\mu) \overset{\sim}\rightarrow
\bigsqcup_{\nu \mbox{ \scriptsize{even}}} Sp\mathcal{GT}^{rev(\lambda-\nu)}_{\mu,2n}\overset{\sim}\rightarrow \bigsqcup_{\nu \mbox{ \scriptsize{even}}} \Hive_{\Z}(\mu,\nu,\lambda,\varphi)
\end{align}

and

\begin{align}\label{bijectionschuto} SST^{\mathfrak{k}-hw}_{2n}(\lambda,\mu)\overset{\sim}{\underset{\Phi^{-1}}\rightarrow} SST^{\widehat{\mathfrak{g}}-dom}_{2n}(\lambda,\mu)\underset{\phi} \rightarrow  LRS_{2n}(\lambda,\mu) \overset{\sim}\rightarrow
\bigsqcup_{\nu \mbox{ \scriptsize{even}}} Sp\mathcal{GT}^{rev(\lambda-\nu)}_{\mu,2n}\overset{\sim}\rightarrow \bigsqcup_{\nu \mbox{ \scriptsize{even}}} \Hive_{\Z}(\mu,\nu,\lambda,\varphi)
\end{align}
where $Sp\mathcal{GT}^{rev(\lambda-\nu)}_{\mu,2n} $ is the set of  symplectic Gelfand-Tselin patterns of type $\mu$ and weight $rev(\lambda-\nu)$, the reverse of the vector $\lambda-\nu$ with $\nu$ an even partition, which are the left companions  of the  LR-Sundaram tableaux  in $LRS_{2n}(\lambda/\mu,\nu)$ \cite{az26} (we refer to \cite[Section 2.4]{akt16} for right and left Gelfand-Tsetlin companions of  an LR tableau); and $\Hive_{Z}(\mu,\nu,\lambda,\varphi)$ is the set of integral hives with boundary data $\mu,\nu,\lambda$  and  flag $\varphi$ in bijection with $LRS_{2n}(\lambda/\mu,\nu)$ studied by Sathish-Torres in \cite[Remark 4.8]{sathishtorres} in relation with  Kwon branching models \cite{kwon18}. Flagged hives were introduced by Kushwaha-Raghavan-Viswanath in \cite{krv21} as a generalization of Knutson-Tao hives \cite{knutson} (We also refer to \cite{alexanderssonpolytope} for integral points of a hive polytope whose boundary data is given by three partitions.)

 In conclusion, the  tableaux in $SST^{\widehat{\mathfrak{g}}-dom}_{2n}(\lambda,\mu)$ and the  tableaux in $SST^{\mathfrak{k}-hw}_{2n}(\lambda,\mu)$ in the quantum LR rule are each of which in bijection with the lattice points
in the (disjoint) union of the flagged hive polytopes $\Hive(\mu,\nu,\lambda,\varphi) $, where the
union is over all even partitions $\nu$. The lattice points of those flagged hive  polytopes are also in bijection with the integral triangles obtained by interlocking  the left and right companion Gelfand-Tstlin pattern  pair of each LRS tableau in the union of $LRS_{2n}(\lambda/\mu,\nu)$ where the union is over all   even partition $\nu$ \cite[Example 2.6]{akt16}.

The approach of  looking at the tableaux in $\domres(\lambda,\mu)$ (or $SST^{\widehat{\mathfrak{g}}-dom}_{2n}(\lambda,\mu)$) as lattice points of the LR cone under the light of the Pak-Vallejo work \cite{pakvallejocones} have  already been considered by Torres \cite{torres} and Schuman-Torres \cite[Theorem 57]{schumanntorres}  in the stable case, that is,  $\ell(\lambda),\ell(\mu)\le n $. In  the stable case, $LRS_{2n}(\lambda/\mu,\nu)=$ $LR_{2n}(\lambda/\mu,\nu)$ and $LR_{2n}(\lambda/\mu,\nu) \overset{\sim}\longleftrightarrow   \mathcal{H}(\mu,\nu,\lambda)$.

For a given positive integer $n$, we have explicitly characterized, by linear inequalities, the ${\mathfrak{k}}$-highest weight tableaux of shape length $2n$ or $2n-1$  produced by $1$-$0$-slack recording tableaux in the quantum Littlewood-Richardson (LR) rule \cite[Theorem 8, Theorem 9]{azreco} and their corollaries. Here, in Section \ref{sec:chracterization}, we extend this characterization for lower shape lengths in the cases $n=3$ and $n=4$ in
Theorem \ref{thm:1} and Theorem \ref{thm:2}.

Given $l\in [0,2n]$ and $t\in[0,n]$ such that  $0\le t\le min\{l,2n-l\}$ and $l-t\in 2\Z$,
explicit procedures were given in \cite{azreduction}, as a  complement to \cite{watanabe}, to compute the inverse of the reduction map
\begin{align}\label{expintro}
\redu^{-1}_t:SpT_{2n}(\varpi_t)&\rightarrow SST_{2n}(\varpi_l).
\nonumber
\end{align}

We are here restricting our analysis to the subset of $1$-$0$ recording tableaux  ${Rec}_{2n}^{1,0}(\lambda/\mu)\subseteq Rec_{2n}(\lambda/\mu) $ \eqref{recording0-1}, and thus, in our case, $t\in \{0,1\}$, which means that

  \begin{align}l\notin 2\Z\Rightarrow t=1 \mbox{  and } l\in 2\Z\Rightarrow t=0,
  \end{align}
and the procedure to compute \eqref{expintro} is \cite[Theorem 2]{azreduction}.
As a consequence,  we have established Lemma \ref{lem:basic} which asserts that the  $\k$-highest weight tableaux of odd shape length   $l\le 2n-1$ produced  by $1$-$0$ slack recording tableaux  are obtained from those of shape length $l+1$ by deleting the columns of length $l+1$. The tableaux obtained in this way share the same linear inequalities. This allows to reduce the analysis for $n=3$ to the lengths $6$, $4$ and $2$, and for $n=4$ to the lengths $8$, $6$, $4$, and $2$. This is done in Theorem \ref{thm:n=3slack1} and Theorem \ref{thm:2}.

Using the promotion operators defining the bijection $\Phi$ between $SST^{\mathfrak{k}-hw}_{2n}(\lambda,\mu)$  and $SST^{\widehat{\mathfrak{g}}-dom}_{2n}(\lambda,\mu)$ in \cite{nsw}, we characterize the corresponding $\widehat{\mathfrak{g}}$-dominant tableaux for $n=3,4$ in Theorem \ref{thm:1} respectively Theorem \ref{thm:2}.
 We note that the cases $n=1,2$ to characterizing $SST^{\widehat{\mathfrak{g}}-dom}_{2n}(\lambda,\mu)$ have   been earlier considered by Torres \cite{torres} and Naito-Suzuki-Watanabe \cite{nsw}, and to characterizing $\k$-highest weight tableaux  has  also been considered by  the latter.
 We highlight Remarks \ref{ha} and \ref{haha} which convey much of the ideas behind the linear inequalities for $\widehat{\mathfrak{g}}$-dominant tableaux produced by $1$-$0$ slack recording tableaux.

 Our work to  characterize $\k$-highest tableaux and  $\widehat{\mathfrak{g}}$-dominant tableaux  for a generic shape length $< 2n-1$  in the quantum Littlewood-Richardson (LR) rule produced by $1$-$0$-slack recording tableaux is still in progress.

Lastly, in the last Section \ref{sec:last}, we finish our manuscript by illustrating the bijections $\phi$ and $\daleth$ for $n=3$ (without restricting to $1$-$0$ recording tableaux). As for the   illustration we see that bijections $\phi$ \ref{phi} and $\daleth$ coincide. In general, it rises the question on the coincidence of the bijections  $\phi$ \eqref{phi} and $\daleth$ \eqref{dale} and Muniz bijection \cite{muniz} for the Naito-Sagaki conjecture.
\medskip

\section*{Acknowledgements}
The author acknowledges financial support by the Centre for Mathematics of the University of Coimbra (CMUC, https://doi.org/10.54499/UID/00324/2025) under the Portuguese Foundation for Science and Technology (FCT), Grants UID/00324/2025 and UID/PRR/00324/2025.

\section{Preliminaries}
This section follows the references \cite{torres, schumanntorres} and \cite{nsw} and we refer to them for more details.

Given $m\in \mathbb{N}$, the set of partitions with at most $m$ parts is denoted by $Par_{\le m}$. Given $\lambda\in Par_{\le m}$, the length of $\lambda$, $\ell(\lambda)$, is the number of  parts of $\lambda$ and $\ell(\lambda)\le m$.

Let $SSYT_m$ be the set of semi-standard Young tableaux of all shapes in $Par_{\le m}$ with entries in the
ordered alphabet $\cal{A}_m = \{a_1 < \cdots < a_m\}$. Let $T \in SSYT_m(\lambda)$, that is, $T \in SSYT_m$ of shape $\lambda\in Par_{\le m}$. The word $w(T)$ of the tableau
$T$ is obtained from it by reading its entries column-wise, top to bottom,  from right to left. The weight of $T$ (respectively the word  $w(T)$) is the vector $\wt(T)=(T[a_1],\dots,T[a_m])$ where $T[a_i]$ is the number of entries $a_i$ of $T$ (respectively  the word $w(T)$), for $i=1,\dots,m$.

\subsection{Dominant tableaux}

Let $SSYT_{C_n}$ be the set of all semi-standard Young tableaux of shapes the partitions in $Par_{\le 2n}$
with entries in the ordered alphabet
$C_n=\{1 < \cdots < n < \bar n <  \cdots< \bar 1\}$. Given $T\in SSYT_{C_n}$, the integral weight of $\mathfrak{sp}(2n,\C)$ is attached to  $T$ and to its word $w(T)$,  that is,  the $\mathfrak{sp}(2n,\C)$-weight of $T$, $\widehat{\wt}(T)$, is defined to be  $\widehat{\wt}(T)=(T[1]-T[\bar 1],\dots, T[n]-T[\bar n])\in \Z^n$. We say that $T\in SSYT_{C_n}$ is \emph{dominant} if the ($\mathfrak{sp}(2n,\C)$) weight of each prefix of its word $w(T)$ is a  partition \cite{schumanntorres,nsw}. In this case, $\wt(T)\in Par_{\le n}$.
For instance,

\begin{align}\label{dominantC}T=\YT{0.15in}{}{
{1,1,1,1,1,1,1,1,1,1,1},
{2,2,2,2,2,2,2,2,2},
{3,3,3,3,3,3,\overline 2,\overline 2},
{\overline 3,\overline 3,\overline 3,\overline 3,\overline 2,\overline 1,\overline 1},
{\overline 2,\overline 2,\overline 1,\overline 1,\overline 1},
{\overline 1}
}
\end{align}
is a dominant tableau either in $SST_{C_3}(11,8,7,6,4,1)$ with weight $\mu=(5,4,2)$ or in $SST_{C_6}(11,8,7,6,4,1,0^6)$ with weight $\mu=(5,4,2,0,0,0)$.

\begin{defi}\cite{schumanntorres} Let  $\lambda\in Par_{\le 2n}$ and $\mu\in Par_{\le n}$ such that $\mu\subseteq \lambda$.
The
set of tableaux in $SSYT_{C_n}$ of shape $\lambda$ and weight $\mu$ that have the dominance property is denoted by $\domres(\lambda,\mu)$.
\end{defi}

Let $\mathfrak{g}:={\mathfrak{gl}}_{2n}(\mathbb{C})$. The fixed point  Lie subalgebra  $\widehat {\mathfrak{g}}=\widehat{\mathfrak{gl}}_{2n}$, induced  by the  folding of the Dynkin diagram of type $A_{2n-1}$, along the middle vertex, into the Dynkin diagram of type $C_n$, is isomorphic to the symplectic Lie algebra $\mathfrak{sp}(2n,C)$ (we refer to \cite{nsw} for details).

Given a semi-standard
Young tableau $T\in \SST$ of shape $\lambda$
of at most $2n$ parts in the alphabet $\cal{A}_{2n}=\{1<2<\cdots<2n\}$, replace each letter $i>n$ by $\overline{2n-i+1}$
to produce a new tableau of the same shape $\lambda$, $\res(T)\in SSYT_{C_n}$ \cite{schumanntorres}. We write $\wt_{\widehat {\mathfrak{g}}}(T):=\widehat{\wt}(\res(T))$.

Read the word of this new tableau $res(T)$ column-wise from right to left and top
to bottom and recall that $\bar i=2n-i+1$. At each step $j$, let $\mu_i^j$ equal to the number of entries  $i$ minus the number of entries $\bar i$ in the word up to that point. Then $\res(T)$ belongs to the set $\domres(\lambda)$ if and only if $\mu_j=(\mu_1^j,\mu_2^j,\dots,\mu_n^j)$ is a partition for all steps $j$. In this case, if the last step returns the partition $\mu\in Par_{\le n}$, $\res(T)$ belongs to the set $\domres(\lambda,\mu)$.

\begin{defi} In \cite{nsw}, given  $\lambda\in Par_{\le 2n}$ and  $\mu\in Par_{\le n}$ such that $\mu\subseteq \lambda$, those tableaux $T$ in $\SST(\lambda)$ whose $\res(T)$ belong to $\domres(\lambda)$ are called $\widehat {\mathfrak{g}}$-\emph{dominant} or $\widehat{\mathfrak{gl}}_{2n}$-\emph{dominant}.
 The set of $\widehat {\mathfrak{g}}$-\emph{dominant} tableaux in  $\SST(\lambda)$ is denoted by
$\SST^{{\widehat{\mathfrak{g}}-dom}}(\lambda)$. The set of $\widehat {\mathfrak{g}}$-\emph{dominant} tableaux $T$ in  $\SST(\lambda)$  whose $\res(T)$ belong to $\domres(\lambda,\mu)$ is denoted by
$\SST^{{\widehat{\mathfrak{g}}-dom}}(\lambda,\mu)$.
\end{defi}

For example, below on LHS the tableau $S$ belongs to $\SST(11,8,7,6,4,1)$, with $n=3$,  and $\res(S)=T\in SST_{C_3}$ in \eqref{dominantC}; and on the $RHS$ the tableau $S'$ belongs to $\SST(11,8,7,6,4,1, 0^6)$ with $n=6$, and $\res(S')=T\in SST_{C_6}$ in \eqref{dominantC}, and their integral $\mathfrak{sp}(2n,C)$-weights are  respectively $\mu=(5,4,2)\in Par_{\le 3}$ and $\mu=(5,4,2,0,0,0)\in Par_{\le 6}$,
\begin{align}S=\YT{0.15in}{}{
{1,1,1,1,1,1,1,1,1,1,1},
{2,2,2,2,2,2,2,2,2},
{3,3,3,3,3,3,5,5},
{4,4,4,4,5,6,6},
{5,5,6,6,6},
{6}
}\in SST_6 \qquad
S'=\YT{0.15in}{}{
{1,1,1,1,1,1,1,1,1,1,1},
{2,2,2,2,2,2,2,2,2},
{3,3,3,3,3,3,11,11},
{10,10,10,10,11,12,12},
{11,11,12,12,12},
{12}
}\in SST_{12}\label{dominantA}
\end{align}

Then $\res(S)\in \domres(\lambda,\mu)$ with $\lambda=(11,8,7,6,4,1)$ and $\wt_{\widehat{\mathfrak{gl}}_{6}}(S)=\mu=(5,4,2)$;  and $\res(S')\in \domres(\lambda , \mu)$
with $\lambda=(11,8,7,6,4,1,0^6)$ and $\wt_{\widehat{\mathfrak{gl}}_{12}}(S')=\mu=(5,4,2,0^3)$. Equivalently $S$ is $\widehat{\mathfrak{gl}}_{6}$-dominant, and $S'$ is $\widehat{\mathfrak{gl}}_{12}$-dominant. In the later case, for $n=6$, $S'\in SST_{2n}$ is such that  $\ell(\lambda), \ell(\mu)\le n$.

\subsubsection{Inequalities for domres in the stable case} Let $\mu\subseteq \lambda$ partitions where $\ell(\mu)\le \ell(\lambda)\le n$. Then we say that $\lambda$ is \emph{stable}.
Since the  entries of a semi-standard Young tableau in $SST_{C_n}$ are weakly increasing along the rows, each semi-standard Young tableau $T $ in the stable case is
determined by the family of non negative numbers, parameterized by pairs in $(i,j)\in C_n\times \{1,\dots,n\}$,

$$\v(T):=\{(i,j):i\in C_n, j\in\{1,\dots,n\}\}$$

where for each $i\in C_n$,
$$(i, j):=\mbox{ number of entries $i\in C_n$ in row $j\in\{1,\dots,n\}$ of $T\in SST_{C_n}$}.$$

 In this case $\domres(\lambda,\mu)$ is the set of lattice points of a polytope \cite{schumanntorres}. This is done in the next lema by encoding each $ T \in \domres(\lambda,\mu)$ by the vector   of non-negative numbers $\v(T)$ as displayed below in \eqref{vector}.

\begin{lem}\cite[Lemma 49]{schumanntorres}\label{encode} Let $ T \in \domres(\lambda,\mu)$ for some $\mu\subseteq \lambda$ where $\ell(\mu)\le \ell(\lambda)\le n$ . Then,  the vector $\v(T)$  of non-negative numbers $(i,j)\in C_n\times \{1,\dots,n\}$  where for each $i\in C_n$,
$$(i, j):=\mbox{ number of entries $i$ in row $j\in\{1,\dots,n\}$ of $T\in SST_{C_n}(\lambda),$}$$
 is determined by the family of non-negative numbers, parameterized
by the pairs pictured in the array as below:
\begin{equation}\label{vector}\begin{array}{cccccccccccccc}
(1,1)&&&&&&&&\\
(2,2)\,&(\overline 1,2)&&&&&&&&\\
(3,3)\,&(\overline 2,3)&(\overline 1,3)&&&&&\\
\vdots&&&\vdots&&&&\\
(n,n)&(\overline {n-1},n)&\cdots&(\overline 3,n)&(\overline 2,n)&(\overline 1,n)&&
\end{array}
\end{equation}
\end{lem}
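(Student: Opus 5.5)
The plan is to show that the remaining entries of $\v(T)$ are forced by the displayed ones and the data $(\lambda,\mu)$, using three facts: column strictness in the stable case, the dominance condition on prefixes of the column reading word, and the row and weight equations.

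\textbf{Step 1: which letters can occur in row $j$.} Since $T$ is semistandard with entries in $C_n=\{1<\cdots<n<\bar n<\cdots<\bar 1\}$ and columns strictly increase, row $j$ contains only letters $\ge j$ in the unbarred part. So $(i,j)=0$ for unbarred $i<j$.

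\textbf{Step 2: row $j$ has no unbarred letter $i>j$, and row $1$ has no barred letter.} We use dominance. Reading the rightmost column top to bottom, its first entry is $1$. Suppose some row $j$ contains an unbarred $i>j$. Take the rightmost such box. Every box to its right in row $j$ holds a letter $\ge i$. Consider the prefix of the reading word ending at the first occurrence of such an $i$ in its column; this is the point where the count of $i$ first exceeds the count of $i-1$. Semistandardness forces the cells above in that column to hold letters $<i$. The standard argument (Schumann--Torres) then shows the prefix weight satisfies $\mu^{\,\cdot}_{i}>\mu^{\,\cdot}_{i-1}$ unless $i-1$ appears strictly above in the same row. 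This is the Yamanouchi-type lattice condition restricted to the unbarred letters, and it forces the unbarred part to be the ``highest'' filling: row $j$ contains only $j$ among the unbarred letters.

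The same prefix argument shows row $1$ contains no barred letter. At the first $\bar i$ read, the count of $i$ would have to remain at least the count of $i+1$ after subtracting one. Together with the filling of the unbarred part, this forbids $\bar i$ in row $1$. More generally it forbids $\bar i$ in row $j$ when $i\ge j$, so row $j$ contains only $\bar 1,\dots,\overline{j-1}$. This yields the triangular shape of the array.

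\textbf{Step 3: the remaining counts are determined.} Now row $j$ has entries $j$ and $\bar 1,\dots,\overline{j-1}$ only. The counts $(j,j)$ and $(\bar i,j)$ for $i<j$ are exactly the listed parameters. All other $(i,j)$ vanish, so $\v(T)$ is determined by the array. Moreover the row equations $\lambda_j=(j,j)+\sum_{i<j}(\bar i,j)$ and the weight equations $\mu_i=(i,i)-\sum_{j>i}(\bar i,j)$ are consistent with these parameters.

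The main obstacle is Step 2. One must check carefully, in the column reading order (right to left, top to bottom), that a misplaced unbarred letter, or a $\bar i$ in a row $j\le i$, creates a non-partition prefix. I would argue by taking the rightmost offending column and comparing with the cells above it, using column strictness and $\ell(\lambda)\le n$.
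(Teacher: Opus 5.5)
Your reduction is the right one. The lemma amounts to showing that, for $j\le \ell(\lambda)\le n$, row $j$ of $T$ contains only the letters $j$ and $\bar 1,\dots,\overline{j-1}$; once that is known, Steps 1 and 3 are bookkeeping. (The paper does not reprove this; it quotes \cite[Lemma 49]{schumanntorres}.)

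However, there is a genuine gap: Step 2, which is the entire content of the lemma, is not proved.
\begin{itemize}
\item You invoke ``the standard argument (Schumann--Torres)'', which is the statement you are proving.
\item The $i$-th coordinate of a prefix weight is $\#i-\#\bar i$, so comparing the counts of $i$ and $i-1$ says nothing until you control the barred letters already read.
\item The phrase ``unless $i-1$ appears strictly above in the same row'' is not meaningful.
\item For a barred letter $\bar i$, the operative constraint is nonnegativity of the $i$-th coordinate, not a comparison with the count of $i+1$.
\item Above all, you set up no induction, so you have no information about the rows above row $j$, and that is exactly what is needed.
\end{itemize}

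The missing argument is short. Induct on $j$, assuming every unbarred letter in a row $r<j$ equals $r$, and fix a box $(j,c)$. The letters read before $T(j,c)$ are those in columns $c'>c$ and those above $(j,c)$ in column $c$. Among them:
\begin{itemize}
\item every entry in a row $\ge j$ is $\ge T(j,c)$, by weak increase along row $j$ and strict increase down columns;
\item every unbarred entry in a row $r<j$ equals $r$.
\end{itemize}
Now split into two cases.
\begin{itemize}
\item If $T(j,c)=i$ is unbarred with $i>j$, then no unbarred $i-1$ has been read. After reading this $i$, the $(i-1)$-th coordinate is $\le 0$. The $i$-th coordinate is $\ge 1$, because it was $\ge 0$ in the preceding prefix (a partition has nonnegative parts). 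This violates weak decrease.
\item If $T(j,c)=\bar i$ with $i\ge j$, then no unbarred $i$ has been read, so the $i$-th coordinate becomes negative.
\end{itemize}
Hence row $j$ contains only $j$ and letters $\bar i$ with $i<j$. Since $\ell(\lambda)\le n$, every row is covered by the array, so all other $(i,j)$ vanish.

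Your last paragraph correctly identifies this as the obstacle, but as written the proposal leaves it open.
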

\begin{defi}\cite[Definition 50]{schumanntorres} \label{def:cancelation}A semi-standard Young tableau $T\in SST_{C_n}$
 satisfying the conclusions \eqref{vector} from \cite[Lemma 49]{schumanntorres}, here Lemma \ref{encode}, has the cancellation property
if, at each step of the word reading, every $\bar i$ in the word $w(T)$ (for $ i \in \{1,\dots , n\}$), can be
paired with one $i $  to its left which is not paired with another $\overline i$ at a previous step.
\end{defi}

\begin{prop}\label{prop:gineq}\cite[Proposition 5.6]{schumanntorres} Let $T$ be a semi-standard Young tableau of shape $\lambda$ and content $\mu$
with entries in $C_n$, satisfying the conclusions from \cite[Lemma 49]{schumanntorres} (here Lemma \ref{encode}) and such that the word of $T$, $w(T)$, has the
cancellation property. Then $T$ belongs to the set $\domres(\lambda, \mu)$ (that is, $w(T)$ has the dominance
property) if and only if for every  $1\le i\le n-1$, and every  $i\le l\le n$, the following inequality
holds for the vector $\v(T)$

\begin{align} \label{ineqdom}
(i,i)- (i+1,i+1)-\sum_{k=i}^l[(\overline i,k)-(\overline{i+1},k-1)] \ge 0,
\end{align}
where for $i\in C_n$, $(i, j):=\mbox{ number of entries $i$ in row $j$}$ of $T$.
\end{prop}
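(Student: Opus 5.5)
The plan is to reduce dominance of $w(T)$ to finitely many prefix conditions, one for each pair $(i,l)$, and to show that each such condition is exactly inequality \eqref{ineqdom}. First I would describe the rows of $T$ from Lemma \ref{encode}. In the stable case, row $j$ consists of a block of $j$'s of length $(j,j)$ followed by blocks of barred letters $\overline{j-1},\dots,\overline 1$ of lengths $(\overline{j-1},j),\dots,(\overline 1,j)$, so $(\overline i,k)=0$ for $k\le i$. By the cancellation property (Definition \ref{def:cancelation}), every coordinate $\mu^j_i$ of every prefix weight is already non-negative. Hence dominance reduces to the conditions $\mu^j_i-\mu^j_{i+1}\ge 0$ for all prefixes $j$ and all $1\le i\le n-1$. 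I would fix $i$ and study the function $f_i(j)=\mu^j_i-\mu^j_{i+1}$ along the column reading (right to left, top to bottom).

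The second step is to locate the minima of $f_i$. The function $f_i$ increases exactly when an $i$ or an $\overline{i+1}$ is read, and decreases exactly when an $\overline i$ or an $i+1$ is read. Column strictness forces the entries of a column to strictly increase downward. I would use this to compare horizontal positions. In the part of the tableau left of the barred region, the $i+1$'s of row $i+1$ sit under $i$'s of row $i$, since $(i,i)\ge(i+1,i+1)$ by stability and row lengths. An $\overline i$ in row $k$ lies weakly left of the region occupied in row $k-1$ by $\overline{i+1}$ and smaller barred letters, so within a column an $\overline i$ in row $k$ can sit directly below an $\overline{i+1}$ in row $k-1$. From this I would argue that, for each $l$ with $i\le l\le n$, a local minimum of $f_i$ is attained at the prefix ending at the leftmost $\overline i$ of row $l$, and that no other prefixes need to be checked. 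To see this, take any prefix and move its endpoint to the next such critical position. Every letter read in between is either an increasing letter, or a decreasing letter that is compensated within its own column.

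The third step computes $f_i$ at these critical prefixes. Along the way I would justify that the letters $i$ and $i+1$ contribute globally through the differences $(i,i)-(i+1,i+1)$. Then the critical value for $l$ equals
\[
(i,i)-(i+1,i+1)-\sum_{k=i}^{l}\bigl[(\overline i,k)-(\overline{i+1},k-1)\bigr],
\]
because each $\overline i$ in rows $\le l$ subtracts one and each $\overline{i+1}$ in rows $\le l-1$ adds one. The pairing of $\overline i$ in row $k$ with $\overline{i+1}$ in row $k-1$ is the column-strict vertical adjacency found in the second step. With both directions in place, the equivalence follows. Dominance gives nonnegativity at the critical prefixes, which are the inequalities. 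Conversely, if the inequalities hold, every prefix value is bounded below by some critical value, and is therefore $\ge 0$.

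The main obstacle I expect is the second step: proving rigorously that the right-to-left, top-to-bottom reading makes all the $i$'s and $(i+1)$'s effectively counted at the critical prefixes, so that the minima are exactly the $n-i+1$ prefixes indexed by $l$. I would first try a careful column-by-column bookkeeping using the shape constraints $\lambda_{k}\ge\lambda_{k+1}$ and the staircase description of Lemma \ref{encode}. If that becomes messy, I would instead argue through the cancellation pairing: match the unmatched surplus of $i$ over $i+1$ against the running count of $\overline i$ minus $\overline{i+1}$, row by row.
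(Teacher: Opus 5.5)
The paper does not prove this statement; it quotes it from Schumann--Torres. So I am comparing your sketch with a complete argument.

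\textbf{What is right.} Your overall plan is sound, and your target formula is correct: the value of $f_i$ at the prefix ending at the leftmost $\overline{i}$ of row $l$ is exactly the left side $E(i,l)$ of \eqref{ineqdom}.

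\textbf{The gap.} The mechanism you give in the second step is false. Letters read between consecutive critical prefixes need not be increasing or compensated within their column. A non-leftmost $\overline{i}$ of row $l\ge i+2$ can sit in a column $1,2,\dots,i+1,\dots,\overline{i}$ containing no $\overline{i+1}$, and that column lowers $f_i$ by one net. For example, take $i=1$ and rows $1\,1\,1\,1\,1\,1$, then $2\,2\,\overline{1}$, then $\overline{1}\,\overline{1}$; this is dominant of weight $(3,2)$. The two critical values are $3$ and $1$, and the $\overline{1}$ in cell $(3,2)$, read between them, is such an uncompensated letter.

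Even if your claim held, it would bound in the wrong direction. Reading only increasing or compensated letters up to the next critical prefix gives $f_i(\text{prefix})\le f_i(\text{critical prefix})$. The converse needs every prefix value to be bounded \emph{below} by a critical value.

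Your third step has a related problem. The vertical adjacency of an $\overline{i}$ in row $k$ with an $\overline{i+1}$ in row $k-1$ is not what makes the count work, since $(\overline{i},k)$ and $(\overline{i+1},k-1)$ are unrelated numbers.

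\textbf{How to repair it.} You need a minimum argument plus positional facts coming from column strictness and the row alphabets $\{k,\overline{k-1},\dots,\overline{1}\}$.

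First, locate the minimum. Take the earliest prefix on which $f_i$ is minimal. If it is nonempty, its last letter lowers $f_i$. That letter is not an $i+1$: the cell above an $i+1$ must hold $i$, because row $i$ has no other letter $<i+1$. That $i$ is read immediately before, so the same value already occurred two letters earlier, contradicting earliest. Hence $\min f_i$ is $0$ or is attained right after some $\overline{i}$.

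Second, reduce to one $\overline{i}$ per row. Above any $\overline{i}$ at $(l,c)$ the entries are forced to be $1,\dots,\min(l-1,i+1)$, followed by letters of which at most one, an $\overline{i+1}$, affects $f_i$. The cells below $(l,c+1)$ hold letters greater than $\overline{i}$, which do not affect $f_i$. So passing from the $\overline{i}$ at $(l,c+1)$ to the one at $(l,c)$ changes $f_i$ by $\epsilon-1\le 0$, or by $0$ if $l=i+1$. The minimum over row $l$ is therefore attained at its leftmost $\overline{i}$, in column $c_l$.

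Third, compute the value there, which is $E(i,l)$:
\begin{itemize}
\item The same forcing shows that exactly $(i,i)-(i+1,i+1)$ more $i$'s than $(i+1)$'s have been read. For $l=i+1$, use $c_l=(i+1,i+1)+1$.
\item The $\overline{i}$'s read are exactly those in rows $\le l$, because the $\overline{i}$'s form a horizontal strip.
\item The $\overline{i+1}$'s read are exactly those in rows $\le l-1$. One at $(k,c)$ with $k<l$ and $c<c_l$ would force $(l,c)\ge\overline{i}$, contradicting the choice of $c_l$. Those in rows $\ge l$ lie left of $c_l$.
\end{itemize}
Hence $\min f_i=\min\bigl(0,\min_l E(i,l)\bigr)$, with $l$ ranging over rows that contain an $\overline{i}$. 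This gives the converse.

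Finally, the forward direction also needs $E(i,l)\ge 0$ when $l=i$, or when row $l$ contains no $\overline{i}$, since there is no critical prefix in those cases. This follows from $E(i,i)=(i,i)-(i+1,i+1)\ge 0$, together with $E(i,l)=E(i,l-1)+(\overline{i+1},l-1)$ whenever row $l$ contains no $\overline{i}$.
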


\begin{thm} \label{polytope}\cite[Theorem 57]{schumanntorres} Let $\mu\subseteq \lambda$ partitions where $\ell(\mu)\le \ell(\lambda)\le n$. Let $\mathcal{DR}(\lambda, \mu)
\subset \R^{n(n+1)/2}$ be the convex polytope of vectors
as in \eqref{vector} satisfying the inequalities \eqref{ineqdom}. Then
\begin{align}\domres(\lambda,\mu)\rightarrow \mathcal{DR}(\lambda, \mu), \quad T\rightarrow & \v(T)
\end{align}
is a bijection between the tableaux in $\domres(\lambda, \mu)$ and the lattice points of the polytope $\mathcal{DR}(\lambda, \mu)$.
\end{thm}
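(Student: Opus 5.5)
The map $T\mapsto \v(T)$ is well defined and injective by Lemma \ref{encode}. That lemma says that for $T\in\domres(\lambda,\mu)$ in the stable case, all the numbers $(i,j)$ are determined by the triangular array \eqref{vector}. So it remains to show two things: the image lies in $\mathcal{DR}(\lambda,\mu)$, and every lattice point of $\mathcal{DR}(\lambda,\mu)$ comes from a tableau.

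\textbf{Structural properties.} First I establish what any $T\in\domres(\lambda,\mu)$ with $\ell(\lambda)\le n$ must satisfy.

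\emph{Row support.} Row $j$ contains only letters $j$ and barred letters $\bar i$ with $i<j$. Reading right to left, top to bottom, a barred letter $\bar i$ appearing before enough unbarred $i$'s would make a prefix weight negative or non-dominant. Column strictness then forces row $j$ to start with its diagonal letter $j$. Dominance forbids $\bar i$ in rows $\le i$: the rightmost column reading hits such a $\bar i$ before any compensating $i$. Semistandardness, combined with dominance of prefixes ending at row starts, forbids unbarred letters $>j$ in row $j$.

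\emph{Recovering $\v(T)$.} Since the row lengths are known ($\lambda$) and the content is $\mu$, the entries $(j,j)$ together with $(\bar i,j)$ for $i<j$ recover everything in $\v(T)$. This is Lemma \ref{encode}.

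\emph{Cancellation.} Dominance of every prefix implies the cancellation property of Definition \ref{def:cancelation}: a prefix with more $\bar i$ than $i$ would have a negative $i$-th coordinate. So Proposition \ref{prop:gineq} applies and yields \eqref{ineqdom}. Together with nonnegativity and the row and content equations, this places $\v(T)$ in $\mathcal{DR}(\lambda,\mu)$.

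\textbf{Surjectivity.} Conversely, take a lattice point of $\mathcal{DR}(\lambda,\mu)$. Build $T$ by filling row $j$ with $(j,j)$ copies of $j$, followed by $(\bar{j-1},j),\dots,(\bar 1,j)$ copies of the barred letters in increasing $C_n$ order. The defining linear constraints of the polytope must force two things: the row lengths equal $\lambda_j$, and the content minus bar-content equals $\mu$. Then I have to check three conditions.
\begin{enumerate}
\item \emph{Column strictness.} The constraint $\mu\subseteq\lambda$ and the interlacing built into the polytope, the diagonal entries $(j,j)$ being bounded by row-length differences, should give it, since a barred letter in row $j$ lies below a letter of row $j-1$ that is no larger. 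I expect this to reduce to inequalities of the type $\sum_{k\ge i}(\bar k,j)\le \sum_{k\ge i}(\bar k,j-1)$ plus diagonal terms. These must either be among the polytope's defining inequalities or follow from \eqref{ineqdom}. If they do not follow, I would read $\mathcal{DR}$ as including the semistandard constraints implicitly.
\item \emph{Cancellation.} This follows since each $\bar i$ sits in rows $j>i$, after all the $i$'s of row $i$ in the reading order, provided $(i,i)$ dominates the total number of $\bar i$'s. That bound is the case $l=n$ summed appropriately in \eqref{ineqdom}.
\item \emph{Dominance.} This is then Proposition \ref{prop:gineq} in reverse.
\end{enumerate}

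\textbf{Main obstacle.} The hard step is the reverse direction: checking that semistandardness and cancellation are genuinely implied by the polytope's inequalities, so that no extra lattice points appear. I would first handle it by induction on $n$. Delete row $n$ and the letters $n,\bar n$ to reduce to $\mathcal{DR}(\bar\lambda,\bar\mu)$ for $n-1$. Then verify that the new inequalities \eqref{ineqdom} with $l=n$ are exactly what is needed to append row $n$ preserving dominance at every prefix. The prefixes that end inside row $n$ are precisely those controlled by the partial sums in \eqref{ineqdom}.
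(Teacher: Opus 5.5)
The paper gives no proof of this statement; it quotes it from Schumann--Torres, and mathematically it is a repackaging of Lemma~\ref{encode} and Proposition~\ref{prop:gineq}. Your skeleton is the natural one: injectivity from Lemma~\ref{encode}, the image via cancellation and Proposition~\ref{prop:gineq}, and surjectivity by filling rows and applying Proposition~\ref{prop:gineq} backwards. The surjectivity half, however, has two real problems.

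\textbf{Column strictness is not a consequence of the inequalities.} It does not follow from \eqref{ineqdom} together with nonnegativity and the row-length and content equations. So the induction on $n$ in your last paragraph is aimed at a false statement. Take $n=3$ with $(1,1)=2$, $(2,2)=0$, $(\overline 1,2)=1$, $(3,3)=(\overline 2,3)=0$ and $(\overline 1,3)=1$.
\begin{itemize}
\item This point has $\lambda=(2,1,1)$ and $\mu=\emptyset$.
\item All five inequalities \eqref{ineqdom} hold, with values $2,1,0,0,0$.
\item The reading word $1\,1\,\overline 1\,\overline 1$ of your filling is even dominant.
\item Yet the first column of the filling is $1,\overline 1,\overline 1$, which is not strictly increasing.
\end{itemize}
Because this word is dominant, no dominance-type inequality can exclude the point. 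The semistandardness inequalities (here $\lambda_3\le (2,2)$) must therefore be part of the definition of $\mathcal{DR}(\lambda,\mu)$. In other words, ``vectors as in \eqref{vector}'' has to mean encodings of semistandard tableaux of shape $\lambda$ and content $\mu$ in the form of Lemma~\ref{encode}. Your hedge is not optional: it is the only reading under which the theorem is true. Once you adopt it, the ``main obstacle'' disappears.

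\textbf{The cancellation argument is wrong as stated.} A $\overline i$ in row $j$, column $c$, is read \emph{before} the $i$'s of row $i$ that lie in columns $<c$. So it is not read after all the $i$'s, and the global bound $(i,i)\ge\sum_j(\overline i,j)$ from \eqref{ineqdom} at $l=n$ does not control prefixes. The correct argument uses column strictness:
\begin{itemize}
\item In the form of Lemma~\ref{encode}, the row-$i$ entry of column $c$ is either $i$ or some $\overline{i'}$ with $i'<i$.
\item Since $\overline{i'}>\overline i$, a $\overline i$ in row $j>i$, column $c$, forces $T(i,c)=i$.
\item Each column contains at most one $\overline i$.
\item Match each $\overline i$ with the $i$ in its own column. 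That $i$ is read earlier, because row $i$ precedes row $j$ within a column.
\end{itemize}
This matching is an injection at every prefix, which is exactly the cancellation property. The ``if'' direction of Proposition~\ref{prop:gineq} then finishes the proof.

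A minor slip: row $j$ need not start with the letter $j$, since $(j,j)$ may be $0$ (e.g.\ the dominant column $1\,\overline 1$).
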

For example, $T\in SST_{C_6}$ in \eqref{dominantC}, equivalently $S'\in SST_{12}$ in \eqref{dominantA}, have $\ell(\mu)\le \ell(\lambda)\le n$ with $n=6$, and are in the conditions of the previous proposition. In particular,  $S'$ satisfies the conclusions from \cite[Lemma 49]{schumanntorres}, that is, $S'$ or $T\in SST_{C_6}$ is defined by the vector $\v(T)$ of non-negative numbers $\{(i,j): i\in C_6, j\in \{1,\dots,6\}\}$  where for $i\in C_6$, $(i, j):=\mbox{ number of entries $i$ in row $j\in\{1,\dots,6\}$}$ of $T\in \in SST_{C_6}$, as displayed below
$$\begin{array}{cccccccccccccc}
\,(1,1)=11&&&&&&&&\\
(2,2)=9\,&(\overline 1,2)=0&&&&&&&&\\
(3,3)=6\,&(\overline 2,3)=2&(\overline 1,3)=0&&&&&\\
(4,4)=0\,&(\overline 3,4)=4&(\overline 2,4)=1&(\overline 1,4)=2&&&&\\
(5,5)=0\,&(\overline 4,5)=0&(\overline 3,5)=0&(\overline 2,5)=2&(\overline 1,5)=3&&&\\
(6,6)=0\,&(\overline 5,6)=0&(\overline 4,6)=0&(\overline 3,6)=0&(\overline 2,6)=0&(\overline 1,6)=1&&
\end{array}
$$

On the other hand, for $n=3$, $T\in SST_{ C_3}$  or $S\in SST_6$ is not parameterized by a vector of non-negative numbers $(i,j)\in C_3\times \{1,2,3\}$ satisfying \eqref{vector} because $\ell(\lambda)=6>3$.
\subsection{Symplectic tableaux and k-highest weight tableaux}
Fix $n\in\mathbb{N}$ and let $SST_{2n}$ the set of all semi-standard tableaux in the alphabet $\mathcal{A}_{2n}$.

\begin{defi}\cite{king76} \label{def:symp}A semistandard tableau $T \in SST_{2n}(\lambda)$ is said to
be symplectic if
$$T(k, 1) \ge 2k-1, \mbox{  for all $  k \in  [ \ell(\lambda)]$}.$$
Let $SpT_{2n}(\lambda)\subset SST_{2n}(\lambda)$ denote the set of all symplectic tableaux of shape $\lambda$ on the alphabet $\mathcal{A}_{2n}$.
\end{defi}
It follows from the definition that if $T\in SpT_{2n}(\lambda)$ then $\ell(\lambda)\le n$.

Let $n\in\mathbb{N}$ and let us consider the two sequences of positive integers defined in \cite{nsw}.
For $k=1,\dots,n$,
\begin{align}&u_k=2k-\frac{1+(-1)^k}{2}=\begin{cases}2k,&\text{ if } k \notin 2\Z,\\
2k-1,&\text{ if } k \in 2\Z,\label{numbers:u}
\end{cases}\\
&v_k=2k-\frac{1+(-1)^{k+1}}{2}=\begin{cases}2k,&\text{ if } k \in 2\Z,\\
2k-1,&\text{ if } k \notin 2\Z. \label{numbers:v}
\end{cases}
\end{align}

It is immediate from the definition of these numbers that any semi-standard tableau in $SST_{2n}$ whose first column is equal to $u_1\cdots u_n$ or $v_1\cdots v_n$ is symplectic.
These two sequences
\begin{align}\{u_i\}_{i=1}^n=\begin{cases}\{2,3,6,7,10,\dots, 2(n-1)-1,2n\},& n\notin 2\Z,\\
\{2,3,6,7,\dots, 2(n-1),2n-1\},& n\in 2\Z\end{cases}\label{numbers:uu}
\end{align}
and
\begin{align}\{v_i\}_{i=1}^n=\begin{cases}\{1,4,5,8,\dots, 2(n-1)-1,2n\},& n\in 2\Z\\
\{1,4,5,\dots, 2(n-1),2n-1\},& n\notin 2\Z,\end{cases}\label{numbers:vv}
\end{align}
 have no common values and its union  gives  $\{u_i\}_{i=1}^n\sqcup\{v_i\}_{i=1}^n=[1,2n]$.

\begin{ex} For $n=6$, $u_i=2,3,6,7,10,11$ and $v_i=1,4,5,8,9,12$, $1\le i\le 6$, and $\{u_i\}_{i=1}^6\cup \{v_i\}_{i=1}^6=\{1,2,\dots,12\}$; and for $n=7$, $u_i=2,3,6,7,10,11, 14$ and $v_i=1,4,5,8,9,12,13$, $1\le i\le 7$,
and $\{u_i\}_{i=1}^6\cup \{v_i\}_{i=1}^6=\{1,2,\dots,14\}$.
\end{ex}

A non-standard realization of the symplectic Lie algebra is  $\k$,  another  certain subalgebra of $\mathfrak{g}$, isomorphic to the symplectic Lie algebra $ \mathfrak{sp}(2n,C)$ and which  arises in $\imath$quantum symmetric pairs \cite{watanabe,nsw}. We now define $\k$-highest tableaux in $SST_{2n}(\lambda)$.
 For $S\in SST_{2n}(\lambda)$, its $\mathfrak{k}$-weight is \cite[Section 5.3]{nsw}
\begin{align}\wt_\mathfrak{k}(S)=(S[u_1]-S[v_1])\tilde\varepsilon_1+(S[u_2]-S[v_2])\tilde\varepsilon_2+\dots+(S[u_n]-S[v_n])\tilde\varepsilon_n.\label{kweight}
\end{align}

For example for $n=3$, $\wt_\mathfrak{k}(S)=(S[2]-S[1])\tilde\varepsilon_1+(S[3]-S[4])\tilde\varepsilon_2+(S[6]-S[5])\tilde\varepsilon_3$.

The set $\widetilde P^+=\{\mu_1 \tilde\varepsilon_1+\cdots+\mu_n \tilde\varepsilon_n\in\widetilde P:\mu_1\ge\cdots\ge \mu_n\ge 0\}$ can be identified with the set $Par_{\le n}$ \cite[Section 5.1]{nsw}.

 If $S$ is  the symplectic tableau below on the LHS 

 \begin{align}\label{symphw-hw}
\YT{0.2in}{}{
 {{u_1},{u_1},\cdots,\cdots,\cdots,\cdots,u_1},
 {{u_2},\cdots,\cdots,\cdots,\cdots,{u_2}},
 {{\vdots},\vdots,\vdots,{\vdots}},
 {u_n,\cdots,u_n},
} &\qquad\qquad
\YT{0.2in}{}{
 {{v_1},{v_1},\cdots,\cdots,\cdots,\cdots,v_1},
 {{v_2},\cdots,\cdots,\cdots,\cdots, {v_2}},
 {{\vdots},\vdots,\vdots,{\vdots}},
 {v_n,\cdots,v_n},
}\\
\mbox{ symplectic $\mathfrak{k}$-highest weight tableau } & \quad \mbox{ symplectic $\mathfrak{k}$-lowest weight tableau }\nonumber
\end{align}
 then the shape $(S[u_1]\ge S[u_2]\ge \dots\ge S[u_n])$ is equal to  its $\mathfrak{k}$-weight,
  $$\wt_\mathfrak{k}(S)=(S[u_1]\ge S[u_2]\ge \dots\ge S[u_n]).$$
 If $S$ is on the RHS \eqref{symphw-hw} then its $\mathfrak{k}$-weight is
   $-(S[v_1]\ge S[v_2]\ge \dots\ge S[v_n])$
   and the shape is
  $$w_0\wt_\mathfrak{k}(S)=w_0(-S[v_1],-S[v_2],\dots,-S[v_n])=(S[v_1]\ge S[v_2]\ge\dots\ge S[v_n])$$
  \noindent where $w_0$ is the longest element of the Weyl group of the Lie algebra $\mathfrak{k}$ isomorphic to the symplectic Lie algebra $\mathfrak{sp}_{2n}(\mathbb{C})$,

  For $n\in\mathbb{N}$, the symplectic $\mathfrak{k}$-highest weight  tableaux  in $SpT_{2n}$ comprise the symplectic columns
\begin{align}u_1\cdots u_{n-1}u_n\in SpT_{2n}(\varpi_n),~u_1\cdots u_{n-1}\in SpT_{2n}(\varpi_{n-1}),\dots, u_1u_2\in SpT_{2n}(\varpi_{2}), ~u_1\in SpT_{2n}(\varpi_1)
\end{align}

The \emph{symplectic} $\mathfrak{k}$-\emph{highest} \emph{weight} \emph{tableau} in $SpT_{2n}(\mu)$ is denoted by $S^{H,\mu}$ where
$$\mu=(S[u_1]\ge S[u_2]\ge \dots\ge S[u_n])$$ depicted on the LHS of \eqref{symphw-hw}.
\begin{ex} Let $n=4$, $S=(1,2,3,4)\in SST_{8}(\varpi_4)$. Then $\wt_\mathfrak{k}(S)=(S[2]-S[1],S[3]-S[4], S[6]-S[5], S[7]-S[8]  )$ $=(0,0,0,0)$.
Let $n=4$, $S=(5,6,7,8 )\in SpT_8(\varpi_4)$ then $$\wt_\mathfrak{k}(S)=(S[2]-S[1],S[3]-S[4], S[6]-S[5],S[7]-S[8])=(0,0,0,0)$$
\end{ex}

More generally, since the sequence $u_1,\dots,u_n$ nor the sequence $v_1,\dots,v_n$ contain an odd number followed with a consecutive even number, then the $\wt_\mathfrak{k}$-weight of any interval fixed by the parity involution  $\s$ is the null vector. That is, if $S=(a_1, a_1+1,\dots, a_1+t-1)\in\SST(\varpi_t)$ with $2\le t\in 2\Z$ and $a_1\notin 2\Z$, then for each $0\le j\le t-2$, either $a_1+j\in \{u_i\}_{i=1}^n$ and $a_1+j+1\in \{v_i\}_{i=1}^n$ or $a_1+j+1\in \{u_i\}_{i=1}^n$ and $a_1+j\in \{v_i\}_{i=1}^n$. Therefore,
$\wt_\mathfrak{k}(S)=0\in \Z_{\ge 0}^n.$

Recall the quantum Littlewood-Richardson rule of type $AII$ \cite{watanabe}
\begin{align} \label{lrsII}
 \LRAII^{AII}: SST_{2n}(\lambda) \overset{\sim}\longrightarrow
\bigsqcup_{\begin{smallmatrix}\mu\in Par_{\le n}\\
\mu\subseteq\lambda\\
Q\in Rec_{2n}(\lambda/\mu)\end{smallmatrix}}SpT_{2n}(\mu) \times \{Q\}.
\end{align}
where $LRS_{2n}(\lambda/\mu)\overset{\sim}{\underset{\lozenge}\longrightarrow} Rec_{2n}(\lambda/\mu)$,
and its inverse ${\LRAII^{AII}}^{-1}$ together with the inverse reduction $\redu^{-1}$ \cite{azreco, azreduction}.
Then,  for each $S\in \SpT(\mu)$,
$$T\in{\LRAII^{AII}}^{-1}(\{S \}\times Rec_{2n}(\lambda/\mu))\Rightarrow \wt_\mathfrak{k}(T)=\wt_\mathfrak{k}(S).$$

For each $\lambda\in Par_{\le 2n}$, let
\begin{align}SST^{\mathfrak{k}-hw}
_{2n}(\lambda):=\bigsqcup_{\begin{smallmatrix}\mu\in Par_{\le n}\\
\mu\subseteq\lambda\end{smallmatrix}} {\LRAII^{AII}}^{-1}(\{S^{H,\mu}\}\times Rec_{2n}(\lambda/\mu)).
\end{align}
be the set of all $\mathfrak{k}$-highest weight tableaux of shape $\lambda$ in $SST_{2n}$.
Then given $\lambda\in Par_{\le 2n}$ and  $\mu\in Par_{\le n}$ such that $\mu\subseteq \lambda $, \cite{azreco}

$$SST^{\mathfrak{k}-hw}
_{2n}(\lambda,\mu):={\LRAII^{AII}}^{-1}(\{S^{H,\mu}\}\times Rec_{2n}(\lambda/\mu)):=\{S\in    SST^{\mathfrak{k}-hw}_{2n}(\lambda)| \wt_{\mathfrak{k}}(S)=\mu\} \subseteq SST_{2n}(\lambda).$$
denotes the set of all $\mathfrak{k}$-highest weight tableaux of shape $\lambda$ in $SST_{2n}$ and $\k$-weight $\mu$.

 When $\ell(\lambda),\ell(\mu)\le n$ and $\mu=\lambda$, then $SST^{\mathfrak{k}-hw}
_{2n}(\lambda,\lambda):={\LRAII^{AII}}^{-1}(\{S^{H,\lambda}\}\times Rec_{2n}(\lambda/\lambda))=\{S^{H,\lambda}\}$, and $S^{H,\lambda}$ the $\k$-highest weight of $SpT_{2n}(\lambda)$.

\section{The k-highest weight tableaux  via  the inverse quantum LR bijection for 1-0-slack sequences and  corresponding dominant tableaux for n less or equal than  4}\label{sec:chracterization}
For a given positive integer $n$, in \cite[Theorem 8, Theorem 9]{azreco} and their corollaries for $n=3,4$, we have characterized the $\mathfrak{k}$-highest weight tableaux in $\SST$ with shape lengths $2n$ or $2n-1$ determined by the quantum recording tableaux in $Rec_{2n}$ with $1$-$0$-slack sequences. In particular, we have exhibited all those tableaux for $n\le 4$ using the inverse quantum Littlewood-Richardson bijection ${\LRAII^{AII}}^{-1}$ via reverse insertion \cite{azreco,azslack}. In the cases of $n\le 2$, they were already exhibited in \cite[Section 6]{nsw} by establishing a bijection between them and corresponding $\widehat{\mathfrak{g}}$-dominant tableaux.  For $n=2$, $\widehat{\mathfrak{g}}$-dominant tableaux were also previously characterized in \cite{torres}.

\subsection{Promotion}

The following operation is the Sch\"utzenberger’s promotion operator $\pr$ \cite{promotion}. We use the formulation  in \cite{nsw}.

\begin{defi}\cite{nsw} \label{def:promotion} Let $1\le a\le b\le 2n$ be integers, and $T\in \SST(\lambda)$. A new semi-standard tableau $\pr^{-1}_{a,b}(T)$ is defined by the following algorithm:

\begin{enumerate}
\item
Remove all boxes with entries not contained $[a,b]$ from $T$, and denote by $T'$ the remaining tableau.

\item
Decrease each entry of $T'$, except for $a$, by $1$, and replace the entries $a$ by $b$. \label{stepdecrease}
\item
Apply the \emph{jeu}-\emph{de}-\emph{taquin} slide \cite{fulton, stanley} to the boxes whose entry equals $b$, from right to left.
\item
Let $\pr^{-1}_{a,b}(T)$ be the tableau obtained by adding the boxes removed in (1) to the tableau obtained in (3).
\end{enumerate}
\end{defi}
\begin{prop}\cite{nsw} The operator $\pr^{-1}_{a,b}:\SST(\lambda)\rightarrow\SST(\lambda)$ is bijective. Its inverse can be constructed by applying steps $(1)--(4)$ in reverse order. The inverse operator  $\pr_{a,b}$ is called the promotion between $a$ and $b$.
\end{prop}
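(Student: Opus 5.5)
We have to show that $\pr^{-1}_{a,b}$ sends $\SST(\lambda)$ into $\SST(\lambda)$, and that it is a bijection whose inverse is the promotion $\pr_{a,b}$, obtained by running steps (1)--(4) of Definition \ref{def:promotion} backwards. The strategy is to reduce to skew tableaux on the interval $[a,b]$, then undo each step separately. Only the jeu-de-taquin step needs real work.

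\emph{Reduction to a skew tableau.} Fix $T\in\SST(\lambda)$. Since rows and columns of $T$ weakly, respectively strictly, increase, the boxes with entries $<a$ form a Young diagram $\alpha\subseteq\lambda$. The boxes with entries $\le b$ form a diagram $\beta$ with $\alpha\subseteq\beta\subseteq\lambda$. So $T'$ is a semistandard tableau of skew shape $\beta/\alpha$ with entries in $[a,b]$. The boxes removed in step (1) fill $\alpha$ and $\lambda/\beta$, and the algorithm never touches them. Hence it suffices to prove that steps (2)--(3) give a bijection from the set of semistandard skew tableaux of shape $\beta/\alpha$ with entries in $[a,b]$ to itself. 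Once this is shown, gluing back $\alpha$ (entries $<a$) and $\lambda/\beta$ (entries $>b$) automatically gives a semistandard tableau of shape $\lambda$: the new entries still lie in $[a,b]$, and the skew shape $\beta/\alpha$ is unchanged.

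\emph{Steps (2)--(3).} The entries equal to $a$ in $T'$ occupy a horizontal strip $h$ along the inner boundary $\alpha$. After step (2), the other entries lie in $[a,b-1]$ and form a semistandard tableau of shape $(\beta/\alpha)\setminus h$, while $h$ carries the letter $b$. This filling is not semistandard in general. Step (3) fixes it by reading the $b$-boxes of $h$ from right to left. Each one is treated as a hole and slid outward by jeu de taquin through the entries $<b$, and it is refilled with $b$ where it stops at the outer boundary $\beta$.

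We then check the following, in the standard way (as in Fulton's or Stanley's treatment of promotion).
\begin{enumerate}
\item Processing right to left, each slide ends at an outer corner of $\beta$. The slide paths do not cross, and successive endpoints move weakly down-left. So the final $b$-boxes form a horizontal strip at the outer rim, and the result is semistandard of shape $\beta/\alpha$.
\item Every jeu-de-taquin slide can be reversed by a reverse slide from the endpoint, and the horizontal strip of $b$'s records the endpoints.
\end{enumerate}

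\emph{The inverse.} The inverse map takes the $b$-strip, slides its boxes inward left to right into $\alpha$, relabels them $a$, and adds $1$ to all other entries. It is well defined by the mirror-image argument, and it undoes step (3) and then step (2). This gives bijectivity, and by definition the inverse is $\pr_{a,b}$.

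\emph{Main obstacle.} The main difficulty is item (1): showing the order-dependent slides give a horizontal strip of $b$'s and preserve semistandardness, in particular that no two $b$'s end in one column. We handle this with the standard non-crossing lemma for slide paths of a horizontal strip processed right to left, which we cite from \cite{fulton} rather than reprove.
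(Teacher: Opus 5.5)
Your proposal is correct, and it is the standard argument behind this statement. The paper gives no proof of its own: it cites \cite{nsw} and observes that running steps (1)--(4) backwards inverts the map, and your reduction to the skew piece $\beta/\alpha$, together with the slide-by-slide reversibility of jeu de taquin and the endpoint lemma for a horizontal strip processed right to left, is exactly what makes that observation rigorous. One point deserves to be stated more sharply: successive slide endpoints move \emph{strictly} left (and weakly down), not merely ``weakly down-left'', and this is what guarantees that the final $b$'s form a horizontal strip.
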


From  \cite[Definition 7.19.]{nsw} the operator
$\Phi:\SST(\lambda)\rightarrow \SST(\lambda)$ with $\overline i=2n-i+1$, $1\le i\le n$, is a bijection defined by
\begin{align}\label{Phicases}\Phi=\begin{cases}\pr_{1,\overline 1}\circ\pr_{3,\overline 2}\circ\pr_{3,\overline 3}\circ\cdots\circ  \pr_{n-3,\overline{n-4}}\circ\pr_{n-3,\overline{n-3}}\circ \pr_{n-1,\overline{n-2}}\circ\pr_{n-1,\overline{n-1}},& \mbox{ if $n\in 2\Z$}\\
\pr_{1,\overline 1}\circ\pr_{3,\overline 2}\circ\pr_{3,\overline 3}\circ\cdots\circ \pr_{n-2,\overline{n-3}}\circ\pr_{n-2,\overline{n-2}}\circ \pr_{n,\overline{n-1}}\circ\pr_{n,\overline{n}},& \mbox{ if $n\notin 2\Z$}
\end{cases}
\end{align}

For $n=1$, $\Phi:=\pr_{1,2}$, for $n=2$,    $\Phi:=\pr_{1,4}$, for $n=3$,   $\Phi:=\pr_{1,6}\circ \pr_{3,5}\circ\pr_{3,4}$, and for $n=4$,  $\Phi:=\pr_{1,8}\circ \pr_{3,7}\circ\pr_{3,6}$.

\begin{prop}\label{special}Let $n\in\N$ and $\Phi$ in \eqref{Phicases}.
\begin{enumerate}
\item [(a)]Let $S^{H,\varpi_n}\in SpT_{2n}(\varpi_n)$. Then $S^{H,\varpi_n}=(u_1,\cdots, u_n)$ such that for $i=1,\dots,n$,
$$u_i=\begin{cases}2\times i-1, & i\in 2\Z\\
2\times i,& i\notin 2\Z
\end{cases}$$
\noindent and $\Phi^{-1}(u_1,\cdots, u_n)=(1,2,\dots,n)$.

\item [(b)]Let $S^{H,\mu}\in SpT_{2n}(\mu)$. Then $\Phi^{-1}(S^{H,\mu})=Y(\mu)\in \SST(\mu)$, where
$Y(\mu)$ is the Yamanouchi tableau of shape $\mu$, that is the tableau  of shape  $\mu$ such that for every $1\le i\le n$ the entries in row $i$ are equal to $i$.

\item [(c)] $ \Phi^{-1}(12\dots 2n)^M=(12\dots 2n)^M=(1\dots n\,\overline n\dots\overline 1)^M $, $M> 0$. \label{3}
\item [(d)] If $l\in 2\Z$ and $l=2q<2n$ then $\Phi^{-1}(12\dots 2q)=(12\dots q\,\overline q\dots\overline 1)$ $=(12\dots q,\, 2n-q+1, \dots,2n)\neq (12\dots l)$.
\end{enumerate}
\end{prop}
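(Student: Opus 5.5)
The plan is to compute $\Phi^{-1}$ explicitly by composing the inverse promotions from Definition \ref{def:promotion} in the order they are applied. By \eqref{Phicases}, $\Phi^{-1}$ first applies $\pr^{-1}_{1,\overline 1}$, then $\pr^{-1}_{3,\overline 2}$, then $\pr^{-1}_{3,\overline 3}$, and so on. For instance, for $n=3$ we have $\Phi^{-1}=\pr^{-1}_{3,4}\circ\pr^{-1}_{3,5}\circ\pr^{-1}_{1,6}$.

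The basic observation is about how a single $\pr^{-1}_{a,b}$ acts on a single column $C$, viewed as a set of entries. Step (2) sends $x\mapsto x-1$ for $a<x\le b$ and $a\mapsto b$. Step (3) then slides the box containing $b$ to the bottom of the portion of $C$ with entries in $[a,b]$. So on $C\cap[a,b]$ the operator acts as the cyclic rotation of the interval $[a,b]$, and it leaves $C$ untouched outside $[a,b]$. In particular, if $a\notin C$, no entry $b$ is created and no slide occurs; the operation is just the order-preserving relabelling $x\mapsto x-1$ on $(a,b]$.

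For (a) and (b) I would prove by induction along the composition that at each step the current tableau has constant rows, with row $i$ filled by a value $c_i$ and $c_1<c_2<\cdots$, and that $a\notin\{c_i\}$ at that step. Initially $c_i=u_i$ by \eqref{symphw-hw}, and $1\notin\{u_i\}$. Under the relabelling the rows stay constant, the tableau stays semistandard, and no jeu-de-taquin occurs. One then tracks the values: $\pr^{-1}_{1,2n}$ lowers every $u_i$ by one. The next pair $\pr^{-1}_{3,\overline 2}$, $\pr^{-1}_{3,\overline 3}$ lowers the entries that are now in $\{4,\dots\}$ twice in total. Concretely, the pattern $u_1-1,u_2-1=1,2$ is fixed, $u_3-1=5$ becomes $3$, and so on, inductively in blocks of two.

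The main bookkeeping obstacle is checking that the left endpoint $a=2k-1$ of each pair is never a current value. After the first $k-1$ blocks the values are $1,\dots,2k-2$ followed by entries $\ge 2k$, and the parity pattern of $\{u_i\}$ in \eqref{numbers:uu} gives exactly this. At the end row $i$ holds $i$. This yields $Y(\mu)$, and for $\mu=\varpi_n$ it yields the column $(1,\dots,n)$.

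For (c), every entry of the column $12\cdots 2n$ lies in each interval $[a,b]$, so each rotation permutes $[a,b]$ onto itself and the set is unchanged. With $M$ identical columns the rows are constant, so the jeu-de-taquin slides of the $b$'s act columnwise and the tableau is again $(12\cdots 2n)^M$. For (d), I would track the set $\{1,\dots,2q\}$ through the same rotations. The first rotation, on $[1,2n]$, gives $\{1,\dots,2q-1,2n\}$. The subsequent rotations on $[2k-1,2n-k+1]$ successively move the top element of the interval-part to the end, giving $\{1,\dots,q\}\cup\{2n-q+1,\dots,2n\}$. This differs from $\{1,\dots,2q\}$ when $q<n$.
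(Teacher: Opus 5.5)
Your argument for (a) and (b) is the paper's. The left endpoint $a$ of each $\pr^{-1}_{a,b}$ never occurs among the current row values, so only step (2) of Definition \ref{def:promotion} acts, no jeu de taquin occurs, and one tracks the values block by block. The paper writes out this induction only for $n$ odd.

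The paper's proof does not address (c) and (d). It only uses the slide-down observation in the proof of Lemma \ref{lem:basic}, and checks $\Phi^{-1}(1234)=(12\overline 2\,\overline 1)$ for $n=3$ in Remark \ref{ha}. Your column-rotation arguments therefore supply these two parts, and they are correct up to two slips.

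\textbf{(c).} The inclusion is stated backwards. Not every entry of $12\cdots 2n$ lies in $[a,b]$. What you actually use is $[a,b]\subseteq\{1,\dots,2n\}$, so that $C\cap[a,b]=[a,b]$ is permuted onto itself. For $M$ columns the slides stay columnwise because ties in the slide go down.

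\textbf{(d).} The $k$-th operator applied is $\pr^{-1}_{a_k,\overline k}$ with $a_k=2\lfloor k/2\rfloor+1$, i.e.\ $a_k=1,3,3,5,5,\dots$. It is not a rotation on $[2k-1,2n-k+1]$. Taken literally, your intervals stop acting too early: for $n=5$, $q=4$ they give $\{1,\dots,5,8,9,10\}$ instead of $\{1,2,3,4,7,8,9,10\}$.

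With the correct intervals, for $1\le k\le q+1$ the set after $k-1$ steps is $\{1,\dots,2q-k+1\}\cup\{\overline{k-1},\dots,\overline 1\}$. The one thing to verify is that $a_k\le 2q-k+1$ if and only if $k\le q$.
\begin{itemize}
\item For $k\le q$, the operator sends $a_k$ to $\overline k$ and shifts the rest of the interval part down by one. There are $n$ or $n-1\ge q$ operators in total, so all $q$ of these steps occur.
\item For $k>q$, the interval $[a_k,\overline k]$ lies in $[q+1,\overline q-1]$ and so meets nothing.
\end{itemize}
This yields $\{1,\dots,q\}\cup\{\overline q,\dots,\overline 1\}$. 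Finally, the claimed inequality with $(12\cdots 2q)$ needs $q\ge 1$; for $q=0$ both columns are empty.
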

\begin{proof} $(a)$ In this case Definition \ref{def:promotion} reduces to step 2 \eqref{stepdecrease} and jeu de taquin is not needed. So  $(b)$ reduces  $(a)$.

We prove $( 1)$, for $n\notin 2\Z$. Let $n=2q+1$ where $q\ge 0$. In this case $(u_1,\dots, u_n)=(2,3,6,\dots,2n)$. Then from \eqref{Phicases},

$$\Phi^{-1}=\pr_{2q+1,\overline{2q+1}}\circ \pr_{2q+1,\overline{2q}}\circ\cdots\circ \pr^{-1}_{2k+1,\overline {2k+1}}\pr^{-1}_{2k+1,\overline 2k}\cdots \pr^{-1}_{3,\overline 3}\pr^{-1}_{3,\overline 2}\circ\pr^{-1}_{1,\overline 1}
$$
Let $0\le k\le q$. We prove by induction on $k\ge 0$ that

\begin{align}&\pr^{-1}_{2k+1,\overline {2k+1}}\pr^{-1}_{2k+1,\overline 2k}\cdots \pr^{-1}_{3,\overline 3}\pr^{-1}_{3,\overline 2}\circ\pr^{-1}_{1,\overline 1}(u_1,\dots,u_n)=\nonumber\\
&=(1,2,\dots,2k+1,2k+2,u_{2k+3}-(2k+1),u_{2k+4}-(2k+1),\dots, u_n-(2k+1))
\end{align}
and when $k=q$ one obtains $\Phi^{-1}(u_1,\cdots, u_n)=(1,2,\dots,n)$.
For $k=0$ one has
\begin{align}&\pr^{-1}_{1,\overline 1}(u_1,\dots,u_n)=
(1,2,u_{3}-1,u_{4}-1,\dots, u_n-1).
\end{align}
and for $k=1$

\begin{align}& \pr^{-1}_{3,\overline 3}\pr^{-1}_{3,\overline 2}\circ\pr^{-1}_{1,\overline 1}(u_1,\dots,u_n)=(1,2,3,4,u_{5}-3,u_6-3,\dots, u_n-3).
\end{align}
By induction on $q>k\ge 0$, and noting that $\overline {2k+3}=2n-(2k+3)+1=2n-(2k+2)$, $\overline{2k+2}=2n-(2k+2)+1=2n-(2k+1)$, and $u_{2k+3}-(2k+1)=2(2k+3)-(2k+1)=2(2k+1)+4-(2k+1)=2k+5$, and
$u_{2k+3}-(2k+2)=2(2k+3)-(2k+2)=2(2k+1)+4-(2k+1)-1=2k+4$,

\begin{align}&\pr^{-1}_{2k+3,\overline {2k+3}}\pr^{-1}_{2k+3,\overline {2k+2}}\pr^{-1}_{2k+1,\overline {2k+1}}\pr^{-1}_{2k+1,\overline {2k}}\cdots \pr^{-1}_{3,\overline 3}\pr^{-1}_{3,\overline 2}\circ\pr^{-1}_{1,\overline 1}(u_1,\dots,u_n)=\nonumber\\
&=\pr^{-1}_{2k+3,\overline {2k+3}}\pr^{-1}_{2k+3,\overline {2k+2}}(1,2,\dots,2k+1,2k+2,u_{2k+3}-(2k+1),u_{2k+4}-(2k+1),\dots, u_n-(2k+1))\nonumber\\
&=\pr^{-1}_{2k+3,\overline {2k+3}} (1,2,\dots,2k+1,2k+2,u_{2k+3}-(2k+1)-1,u_{2k+4}-(2k+1)-1,\dots, u_n-(2k+1)-1)\nonumber\\
&=\pr^{-1}_{2k+3,\overline {2k+3}} (1,2,\dots,2k+1,2k+2,u_{2k+3}-(2k+2),u_{2k+4}-(2k+2),\dots, u_n-(2k+2))\nonumber\\
&=(1,2,\dots,2k+1,2k+2,2k+3,2k+4,u_{2k+5}-(2k+2)-1,\dots,u_n-(2k+2)-1).\nonumber
\end{align}
\end{proof}

\begin{thm}\cite[Theorem 5.18.]{nsw} For all $\lambda\in Par_{\le 2n}$ there exist two bijections

\begin{align}\Phi:SST^{\widehat{\mathfrak{g}}-dom}_{2n}(\lambda)\overset{\sim}\longrightarrow SST^{\mathfrak{k}-hw}_{2n}(\lambda) \quad \Psi:SST^{\widehat{\mathfrak{g}}-dom}_{2n}(\lambda)\overset{\sim}\longrightarrow SST^{\mathfrak{k}-lw}_{2n}(\lambda)
\end{align}
such that such that for each $T\in SST^{\widehat{\mathfrak{g}}-dom}_{2n}(\lambda)$
$$\wt_{\widehat{\mathfrak{g}}}(T)=\wt_{\widehat{\mathfrak{k}}}(\Phi(T))=\omega_0\wt_{\widehat{\mathfrak{k}}}(\Psi(T)),$$
where $w_0$ is the longest element of the Weyl group of the Lie subalgebra $\k$.
\end{thm}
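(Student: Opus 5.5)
The plan is to deduce the statement by comparing two branching models for the restriction from $\mathfrak{gl}_{2n}$ to the symplectic algebra, and then checking that the explicit operator $\Phi$ in \eqref{Phicases} realizes the resulting equality of sets as an actual bijection. The first step is a counting argument, carried out weight by weight. Fix $\lambda\in Par_{\le 2n}$ and $\mu\in Par_{\le n}$ with $\mu\subseteq\lambda$.
\begin{enumerate}
\item[(i)] By the Schumann--Torres bijection \eqref{phi}, $|SST^{\widehat{\mathfrak g}-dom}_{2n}(\lambda,\mu)|=|LRS_{2n}(\lambda,\mu)|$.
\item[(ii)] By the quantum LR rule \eqref{lrsII}, together with $\lozenge:LRS_{2n}(\lambda,\mu)\to Rec_{2n}(\lambda/\mu)$, $|SST^{\mathfrak k-hw}_{2n}(\lambda,\mu)|=|Rec_{2n}(\lambda/\mu)|=|LRS_{2n}(\lambda,\mu)|$.
\end{enumerate}
So the two sets have the same cardinality for every $\mu$. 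Since $\Phi:\SST(\lambda)\to\SST(\lambda)$ is already a bijection, being a composition of the bijective promotions $\pr_{a,b}$, it suffices to prove the inclusion $\Phi(SST^{\widehat{\mathfrak g}-dom}_{2n}(\lambda,\mu))\subseteq SST^{\mathfrak k-hw}_{2n}(\lambda,\mu)$. The weight identity $\wt_{\widehat{\mathfrak g}}(T)=\wt_{\mathfrak k}(\Phi(T))$ then follows automatically.

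For the inclusion I would argue on the crystal side. A tableau is $\widehat{\mathfrak g}$-dominant exactly when it is a highest weight element for the folded crystal operators $\widehat e_i$, which combine $e_i$ and $e_{2n-i}$ (and use $e_n$ for the middle node). I would then use the standard fact that promotion on an interval $[a,b]$ conjugates the $\mathfrak{gl}$-crystal operators $e_j$ to $e_{j\pm1}$ inside that interval. Following the factors of $\Phi$ one at a time, the composite should transport each folded operator $\widehat e_i$ to the $\imath$crystal operator of $\mathfrak k$ attached to the pair $(u_i,v_i)$ from \eqref{numbers:u}--\eqref{numbers:v}. Concretely, each pair $\pr_{2k+1,\overline{2k}},\pr_{2k+1,\overline{2k+1}}$ should realign the letters $2k+1,2k+2,\dots$ so that the pairs $\{i,\bar i\}$ are sent to $\{u_i,v_i\}$. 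This is exactly the bookkeeping already carried out on columns in the proof of Proposition \ref{special}(a). Weights are transported because $\wt_{\mathfrak k}$ in \eqref{kweight} is the difference $S[u_i]-S[v_i]$, which corresponds under $\Phi$ to $T[i]-T[\bar i]$. As a consistency check, the Yamanouchi tableau $Y(\mu)$, the $\widehat{\mathfrak g}$-highest element of weight $\mu$, goes to $S^{H,\mu}$ by Proposition \ref{special}(b).

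The main obstacle is this intertwining step. The $\mathfrak k$-crystal, or $\imath$crystal, structure of type AII is not an ordinary Kashiwara crystal, and its highest weight elements are defined through $\LRAII^{AII}$ rather than through a simple lattice-word condition. I would first reduce to single columns, using that both $\res$-dominance and the $\mathfrak k$-structure are compatible with the column reading and tensor products. I would then verify the intertwining on the fundamental columns $SST_{2n}(\varpi_l)$, using the inverse reduction maps $\redu^{-1}_t$ of \cite{azreduction}, before passing to general shapes.

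For $\Psi$, I would compose $\Phi$ with the action of the longest Weyl element $w_0$ of $\mathfrak k$, a Lusztig--Schützenberger type involution on each $\mathfrak k$-component. This involution exchanges highest and lowest weight elements and replaces the weight by its $w_0$-image, which gives $\wt_{\widehat{\mathfrak g}}(T)=w_0\wt_{\mathfrak k}(\Psi(T))$.
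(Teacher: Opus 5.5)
\paragraph{The paper's treatment and your counting step.} The paper does not prove this statement. It is quoted from \cite[Theorem 5.18]{nsw}. The only related work here is the case-by-case check in Theorems \ref{thm:1} and \ref{thm:2}: for $n\le 4$, $\Phi^{-1}$ sends the explicit $\mathfrak{k}$-highest weight tableaux coming from $1$-$0$-slack recording tableaux to dominant ones.

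Your counting step (i)--(ii) is sound. Since the statement only asserts existence, it already proves the statement as literally written. For each $\mu$, the map ${\LRAII^{AII}}^{-1}\circ\pi^{-1}\circ\lozenge\circ\phi$, precomposed with $\res$, is a bijection $SST^{\widehat{\mathfrak g}-dom}_{2n}(\lambda,\mu)\to SST^{\mathfrak{k}-hw}_{2n}(\lambda,\mu)$ that preserves the weight $\mu$. You get $\Psi$ by replacing $S^{H,\mu}$ with the symplectic $\mathfrak{k}$-lowest weight tableau. No Lusztig--Sch\"utzenberger involution is needed, since $w_0=-1$ in type $C_n$.

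This route has two limitations. It relies on the Schumann--Torres solution of the Naito--Sagaki conjecture, which is exactly what \cite{nsw} set out to reprove. And it says nothing about the promotion composite \eqref{Phicases} that the paper actually uses (Corollary \ref{prop:promn3}).

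\paragraph{The gap for the explicit $\Phi$.} For that $\Phi$ your argument has a genuine gap, at three points.

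First, the characterization of dominance via folded operators is false under either reading.
\begin{itemize}
\item If $\widehat e_1=e_1e_3$ (take $n=2$), the one-box tableau $2$ is killed by $e_1e_3$, $e_3e_1$ and $e_2$. Yet its $\widehat{\mathfrak g}$-weight is $(0,1)$, so it is not dominant.
\item If you require $e_i$ and $e_{2n-i}$ to vanish separately, you get only the $\mathfrak{gl}_{2n}$-highest weight (Yamanouchi) tableaux. But the column with entries $1,4$, that is $(1\,\bar 1)=\Phi^{-1}(1,2)$ from Proposition \ref{special}(d), is dominant while $e_3$ sends it to the column $(1,3)\neq 0$.
\end{itemize}
Dominance of $w(\res(T))$ is the highest weight condition of the type $C_n$ crystal of words. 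Its $i$-signature (for $i<n$) brackets $\{i,\overline{i+1}\}$ against $\{i+1,\bar i\}$ in a single sequence, so it is not built from $e_i$ and $e_{2n-i}$. Consequently, intertwining properties of promotion with $\mathfrak{gl}_{2n}$-operators give no grip on it.

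Second, on the other side, $SST^{\mathfrak{k}-hw}_{2n}$ is defined by $P^{AII}(T)=S^{H,\mu}$, through insertion and the reduction maps. It is not defined by operators you can conjugate. So the claim that ``$\Phi$ transports $\widehat e_i$ to the $\imath$crystal operator attached to $(u_i,v_i)$'' is the theorem restated, not a step. Proposition \ref{special}(a) concerns a single column on which no jeu de taquin occurs. It only tracks contents, which gives the weight identity but no intertwining.

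Third, the reduction to columns cannot work as stated.
\begin{itemize}
\item Dominance is a prefix condition on the whole reading word.
\item A tensor-product highest-weight criterion needs the $\varepsilon_i$ of each factor, not just the highest weight elements of each factor.
\item $\Phi^{-1}$ does not act columnwise. Remark \ref{ha} shows that deleting the first column of $S$ turns the column $12\bar 2$ of $\Phi^{-1}(S)$ into $123$.
\end{itemize}

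What is missing is an actual proof that $\Phi$ maps $SST^{\widehat{\mathfrak g}-dom}_{2n}(\lambda,\mu)$ into $SST^{\mathfrak{k}-hw}_{2n}(\lambda,\mu)$. Given your counting, it would equally suffice to show that $\Phi^{-1}$ maps $SST^{\mathfrak{k}-hw}_{2n}(\lambda,\mu)$ into the dominant tableaux; this is the direction the paper verifies by hand in its special cases.
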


In particular one has the following.

\begin{cor}\cite{nsw}\label{prop:promn3} Let $T\in SST^{\mathfrak{k}-hw}
_{2n}(\lambda)$. For $n=1$ let  $\Phi:=\pr_{1,2}$, for $n=2$,  let  $\Phi:=\pr_{1,4}$, for $n=3$, let  $\Phi:=\pr_{1,6}\circ \pr_{3,5}\circ\pr_{3,4}$, and for $n=4$, let $\Phi:=\pr_{1,8}\circ \pr_{3,7}\circ\pr_{3,6}$.
Then, for $n=1,2,3,4$, the following equalities hold

\begin{align} \SST^{{\widehat{\mathfrak{g}}-dom}}(\lambda)=\Phi^{-1}\SST^{\mathfrak{k}-hw}(\lambda),\quad \wt_{\mathfrak{k}}(T)=\wt_{\widehat{\mathfrak{g}}}(\Phi^{-1}(T)).
\end{align}
\end{cor}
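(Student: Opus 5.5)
This corollary should be read off from \cite[Theorem 5.18]{nsw}: the work is to match the general composition \eqref{Phicases} with the explicit operators listed for $n\le 4$, and then to invert the bijection. I would do this in three steps.

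First, specialise \eqref{Phicases} to each $n\in\{1,2,3,4\}$, using $\overline i=2n-i+1$. For $n$ odd the formula is $\pr_{1,\overline 1}\circ\cdots\circ\pr_{n,\overline{n-1}}\circ\pr_{n,\overline n}$, with the middle pairs $\pr_{2k+1,\overline{2k}}\circ\pr_{2k+1,\overline{2k+1}}$ running over $1\le k\le (n-1)/2$.
\begin{itemize}
\item For $n=1$ only the factor $\pr_{1,\overline 1}=\pr_{1,2}$ survives.
\item For $n=3$ we get $\pr_{1,6}\circ\pr_{3,\overline 2}\circ\pr_{3,\overline 3}=\pr_{1,6}\circ\pr_{3,5}\circ\pr_{3,4}$, since $\overline 2=5$ and $\overline 3=4$.
\item For $n$ even the last pair is $\pr_{n-1,\overline{n-2}}\circ\pr_{n-1,\overline{n-1}}$. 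When $n=2$ this pair would be indexed by $\overline 0$; I read the formula as empty past $\pr_{1,\overline 1}$, which gives $\Phi=\pr_{1,4}$.
\item For $n=4$ we get $\pr_{1,8}\circ\pr_{3,\overline 2}\circ\pr_{3,\overline 3}=\pr_{1,8}\circ\pr_{3,7}\circ\pr_{3,6}$, since $\overline 2=7$ and $\overline 3=6$.
\end{itemize}
This shows that the operators in the statement coincide with $\Phi$ of \cite[Definition 7.19]{nsw}.

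Second, apply \cite[Theorem 5.18]{nsw}. The map $\Phi$ restricts to a bijection from $SST^{\widehat{\mathfrak g}-dom}_{2n}(\lambda)$ onto $SST^{\mathfrak k-hw}_{2n}(\lambda)$. Each $\pr_{a,b}$ is a bijection of $\SST(\lambda)$ by the promotion proposition, so $\Phi$ is bijective on all of $\SST(\lambda)$. Hence $\Phi^{-1}$ is well defined there and maps $SST^{\mathfrak k-hw}_{2n}(\lambda)$ onto $SST^{\widehat{\mathfrak g}-dom}_{2n}(\lambda)$, which is the first equality.

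Third, the weight identity. For $T\in SST^{\mathfrak k-hw}_{2n}(\lambda)$ put $T'=\Phi^{-1}(T)$. Then $T'$ is $\widehat{\mathfrak g}$-dominant, and the theorem gives $\wt_{\widehat{\mathfrak g}}(T')=\wt_{\mathfrak k}(\Phi(T'))=\wt_{\mathfrak k}(T)$.

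The only delicate point is the first step: the boundary indexing of \eqref{Phicases} for small $n$, especially the degenerate $n=2$ case. As a consistency check there I would use Proposition \ref{special}(b): one should have $\Phi^{-1}(S^{H,\mu})=Y(\mu)$. For $n=2$ and $\mu=\varpi_1$ this means $\pr_{1,4}^{-1}(2)=1$, which holds, since step (2) of Definition \ref{def:promotion} lowers the entry $2$ to $1$.
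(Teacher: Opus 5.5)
Your proposal is correct and follows the paper's route: the paper gives no separate argument and obtains this corollary directly (``in particular'') from \cite[Theorem 5.18]{nsw}, after specializing the composition \eqref{Phicases} to the explicit operators for $n\le 4$, and you do the same, inverting the bijection and applying the weight identity to $\Phi^{-1}(T)$. Your reading of the degenerate boundary indexing for $n=1,2$, where no pairs $\pr_{2k+1,\overline{2k}}\circ\pr_{2k+1,\overline{2k+1}}$ occur, agrees with the paper's explicit list of $\Phi$ for small $n$.
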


\subsection{Linear inequalities for dominant  and k-highest weight tableaux via 1-0 slack recording tableaux for n=3 and n=4}
Fix $n\in \N$ and let $\lambda\in Par_{\le 2n}$ and  $\mu\in Par_{\le n}$ be such that $\mu\subseteq \lambda $.
We set, from \cite{azreco},
\begin{align}\label{recording0-1}&{Rec}_{2n}^{1,0}(\lambda/\mu):=\{Q\in Rec_{2n}(\lambda/\mu) \mbox{ with $1$-$0$-slack sequence }\}\\
&{Rec}_{2n}^{1}(\lambda/\mu):=\{Q\in Rec_{2n}(\lambda/\mu) \mbox{ with $1$-slack sequence }\}.
\end{align}
\begin{obs} When $\lambda=\mu$, $Rec_{2n}(\mu/\mu)=\{\mu/\mu\}$ and ${LR^{AII}}^{-1}(\{S^{H,\mu}\}\times Rec_{2n}(\mu/\mu))=\{S^{H,\mu}\}$ and by Proposition \ref{special}, $\Phi^{-1}(S^{H,\mu})=Y(\mu)$. We consider from now on $\mu\subsetneq \lambda$.
\end{obs}
\begin{obs}\cite{azreco} \label{1slck} Let $Q\in {Rec}_{2n}^{1}(\lambda/\mu)$. Then $Q$ has a vertical strip decomposition $$\lambda=\mu^{(0)}\supset_{vert} \mu^{(1)}\supset_{vert} \cdots\supset_{vert}\mu^{(N-1)}\supset_{vert}\mu^{(N)} =\mu$$
where $N\ge 1$ such that, for all $1\le i\le N$,
\begin{enumerate}
\item
 $2n-1\ge Q[i]+1=\ell(\mu^{(i-1)}) \mbox{ and } \ell(\mu^{(i-1)})\notin 2\Z$ where $Q[i]$ is the length of the vertical strip $\mu^{(i-1)}/\mu^{(i)}$,

\item  $\ell(\lambda)\notin 2\Z$.

\end{enumerate}
\end{obs}

\begin{obs}\cite{azreco} \label{1-0slck} Let $Q\in {Rec}_{2n}^{1,0}(\lambda/\mu)$. Then $Q$ has a vertical strip decomposition  as in the previous Remark with a  tail of $M>0$ vertical strips of length $\ell(\lambda)$  if and only if $\ell(\lambda)\in 2Z$, that is,
$$\lambda=\mu^{(0)}\supset_{vert} \mu^{(1)}\supset_{vert}\cdots\supset_{vert} \mu^{(M-1)}\supset_{vert} \cdots\supset_{vert}\mu^{(N-1)}\supset_{vert}\mu^{(N)} =\mu$$

where $N\ge 1$ such that,
\begin{enumerate}
\item  for all $M\le i\le N$,
 $2n-1\ge Q[i]+1=\ell(\mu^{(i-1)}) \mbox{ and } \ell(\mu^{(i-1)})\notin 2\Z$ where $Q[i]$ is the length of the vertical strip $\mu^{(i-1)}/\mu^{(i)}$,

\item for all $0\le i\le M-1$, $2n\ge Q[i]=\ell(\lambda) \mbox{ and }  \ell(\lambda)\in 2\Z$ if $M>0$.

\item Let $Q^{\textsf{tail}}$ be of shape $\lambda/\mu^{(M)}$ consisting of the vertical strips of length $\ell(\lambda)$ in $Q$, $\lambda=\mu^{(0)}\supset_{vert} \mu^{(1)}\supset_{vert}\cdots\supset_{vert} \mu^{(M-1)}\supset_{vert} \mu^{(M)}$. Then $Q^{\textsf{tail}}\in  {Rec}_{2n}^{1,0}(\lambda/\mu^{(M)})$ has $0$-slack sequence and is called the tail of $Q$ whose length is $M$.
    We write
    $$Q=Q\setminus Q^{\textrm{tail}}\bigsqcup Q^{\textrm{tail}},$$
    where $Q\setminus Q^{\textrm{tail}}$ is defined by the vertical strip $\mu^{(M)}\supset_{vert} \cdots\supset_{vert}\mu^{(N-1)}\supset_{vert}\mu^{(N)} =\mu$ and $Q=Q^{\textrm{tail}}$ when $M=N$.
\end{enumerate}
\end{obs}

Let  $l\in [0,2n]$ and recall that the reduction map $\redu$ on $SST_{2n}(\varpi_l)$ is  the bijective assignment \cite{watanabe,azreduction}

\begin{align}\redu=\redu_l:SST_{2n}(\varpi_l)&\overset{\sim}\rightarrow\bigsqcup_{\begin{smallmatrix} 0\le t\le min\{l,2n-l\}\\
l-t\in 2\mathbb{Z}
\end{smallmatrix}}SpT_{2n}(\varpi_t),\quad \mathbf{a}\mapsto \redu( \mathbf{a}).
\label{redumap}
\end{align}

Given $l\in [0,2n]$ and $t\in[0,n]$ such that  $0\le t\le min\{l,2n-l\}$ and $l-t\in 2\Z$,
explicit procedures were given in \cite{azreduction} to compute the inverse of the reduction map
\begin{align}\label{exp}
\redu^{-1}_t:SpT_{2n}(\varpi_t)&\rightarrow SST_{2n}(\varpi_l).
\end{align}

Since we are restricting our analysis to the subset of recording tableaux  ${Rec}_{2n}^{1,0}(\lambda/\mu)\subseteq Rec_{2n}(\lambda/\mu) $ \eqref{recording0-1}, in our case, $t\in \{0,1\}$, which means that

  \begin{align}\label{0-1rule}l\notin 2\Z\Rightarrow t=1 \mbox{  and } l\in 2\Z\Rightarrow t=0.
  \end{align}
and the procedure to compute \eqref{exp} is \cite[Theorem 2]{azreduction}. Namely, for $l\notin 2\Z$ and $t=1$,
\begin{align}\label{expexp}
\redu^{-1}_1:SpT_{2n}(\varpi_1)&\rightarrow SST_{2n}(\varpi_l)\\
({a})&\mapsto \redu^{-1}_1({a})=\begin{cases}(1\dots l_1)a (a+1,\dots,a+l-l_1-1), & \mbox{ if } a\in 2\Z,\\
(1\dots l_1)a(a+2,\dots,a+l-l_1),& \mbox{ if } a\notin 2\Z,
\end{cases}
\nonumber
\end{align}

where $$l_1=\begin{cases}min\{a-2,l-1\}& a\in 2\Z\\
min\{a-1,l-1\}& a\notin 2\Z;
\end{cases}$$
and, for $l\in 2\Z$ and $t=0$,
$\redu^{-1}_0(())=(12\dots l)$.

\begin{obs} For $l=0$, $t=0$ and for $l=1$, $t=1$
\begin{align*}
\redu^{-1}_0:SpT_{2n}(\varpi_0)&\rightarrow SST_{2n}(\varpi_0)\\
()&\mapsto \redu^{-1}_0()=()
\end{align*}
and
\begin{align*}
\redu^{-1}_1:SpT_{2n}(\varpi_1)&\rightarrow SST_{2n}(\varpi_1)\\
({a})&\mapsto \redu^{-1}_1({a})=(a)
\end{align*}

\end{obs}

More precisely, recalling that ${\LRAII^{AII}}^{-1}(\{S^{H,\mu}\}\times Rec^{1,0}(\lambda/\mu))$ \cite{azreco} is obtained by reverse Schensted insertion followed with the inverse reduction map $\redu_1^{-1}$ or $\redu_0^{-1}$ \eqref{exp},
\begin{align}&SST^{\mathfrak{k}-hw}
_{2n}(\lambda,\mu)\supseteq\nonumber\\
&\supseteq{\LRAII^{AII}}^{-1}(\{S^{H,\mu}\}\times Rec^{0,1}_{2n}(\lambda/\mu)):=\{S\in    SST^{\mathfrak{k}-hw}_{2n}(\lambda)| Q^{AII}(S)\in Rec^{0,1}(\lambda/\mu)\wedge \wt_{\mathfrak{k}}(S)=\mu\}
\end{align}
and  in \eqref{redumap}, \eqref{exp},  $l\le \ell(\lambda)\le 2n$.

 In the next lemma, points $(1)$ and $(2)$, $(a)$ follow from the discussion above.
\begin{lem}\label{lem:basic} Given $\mu\in Par_{\le n}$ and $l\notin 2\Z$ such that $\ell (\mu)\le l\le 2n$, it holds
\begin{enumerate}
\item
$$\bigsqcup_{\begin{smallmatrix} \lambda\in Par_{2n}\\
\ell(\lambda)=l\\
\mu\subseteq \lambda
\end{smallmatrix}}Rec^{1,0}_{2n}(\lambda/\mu)=\big\{Q\setminus Q^{tail}:Q\in\bigsqcup_{\begin{smallmatrix} \lambda\in Par_{2n}\\
\ell(\lambda)=l+1\\
\mu\subseteq \lambda
\end{smallmatrix}}Rec^{1,0}_{2n}(\lambda/\mu)\big\}.$$

In other words,  the recording tableaux in
$\displaystyle \bigsqcup_{\begin{smallmatrix} \lambda\in Par_{2n}\\
\ell(\lambda)=l\\
\mu\subseteq \lambda
\end{smallmatrix}}\ Rec^{1,0}_{2n}(\lambda/\mu)$ are obtained from those in

 $\displaystyle\bigsqcup_{\begin{smallmatrix} \lambda\in Par_{2n}\\
\ell(\lambda)=l+1\\
\mu\subseteq \lambda
\end{smallmatrix}}\ Rec^{1,0}_{2n}(\lambda/\mu)$ by suppressing the \textsf{tails}.
\item  Let $Q\in Rec^{1,0}_{2n}(\lambda/\mu)$ with $\ell(\lambda)=l+1$ such that $\lambda-M\varpi_{l+1}$ has length $l$ for some $M>0$. Then
\begin{enumerate}
\item  $S={LR^{AII}}^{-1}(S^{H,\mu}, Q)$ if and only if  $S=(1,2,\dots l, \, l+1)^M S'$,  and

$S'={LR^{AII}}^{-1}(S^{H,\mu}, Q\setminus Q^{\textsf{tail}})$ where $Q^{\textsf{tail}}$.

In other words $S\in\SST^{\mathfrak{k}-hw}(\lambda,\mu)\Leftrightarrow
S'\in \SST^{\mathfrak{k}-hw}(\lambda-M\varpi_{l+1},\mu)$

\item When $l+1=2n$, $\Phi^{-1}(S)=\Phi^{-1}{LR^{AII}}^{-1}(S^{H,\mu}, Q)$ if and only if

$\Phi^{-1}(S)=(1,2,\dots  \, 2n)^M \Phi^{-1}(S')$ and

$\Phi^{-1}(S')=\Phi^{-1}{LR^{AII}}^{-1}(S^{H,\mu}, Q\setminus Q^{\textsf{tail}})$.
One has $$\wt_{\widehat{\mathfrak{g}}}(\Phi^{-1}(S'))=\wt_{\widehat{\mathfrak{k}}}(S')=\mu$$ and $$\wt_{\widehat{\mathfrak{g}}}\Phi^{-1}(1,2,\dots, 2n)=\wt_{\widehat{\mathfrak{g}}}(1,2,\dots, 2n)=0=\wt_{\widehat{\mathfrak{k}}}(1,2,\dots, 2n)=0$$
\end{enumerate}

\end{enumerate}
\end{lem}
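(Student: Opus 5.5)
Part (1) and part (2)(a) come directly from the structure of $1$-$0$-slack recording tableaux in Remarks \ref{1slck} and \ref{1-0slck}, together with the reduction rule \eqref{0-1rule}. Part (2)(b) then follows from (2)(a) and Proposition \ref{special}(c).

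For (1), I would take $Q\in Rec^{1,0}_{2n}(\lambda/\mu)$ with $\ell(\lambda)=l+1$. Since $l+1\in 2\Z$, Remark \ref{1-0slck} gives a tail of $M>0$ vertical strips of length $l+1$. Removing the tail leaves the vertical strip chain $\mu^{(M)}\supset\cdots\supset\mu=\mu^{(N)}$. Every strip in this chain satisfies the $1$-slack conditions of Remark \ref{1slck}, and $\ell(\mu^{(M)})$ is odd. Since $\ell(\mu^{(M)})\le l+1$ is odd and strips of length $l+1$ were all removed, one has $\ell(\mu^{(M)})=l$. Hence $Q\setminus Q^{tail}\in Rec^{1,0}_{2n}(\mu^{(M)}/\mu)$ with $\ell(\mu^{(M)})=l$, which gives the inclusion $\supseteq$. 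For the reverse inclusion, take $Q'\in Rec^{1,0}_{2n}(\lambda'/\mu)$ with $\ell(\lambda')=l$. I would glue on $M\ge1$ vertical strips of length $l+1$, giving $\lambda=\lambda'+M\varpi_{l+1}$. The resulting filling satisfies conditions (1)–(3) of Remark \ref{1-0slck}, and its tail is exactly the glued strips. The one point to check is that the glued object is a legitimate recording tableau in $Rec_{2n}$. For this I would use the characterization of $Rec_{2n}$ from \cite{azreco} via its vertical strip and slack data: the only new constraint is on the new strips, which have $0$ slack, and these are allowed.

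For (2)(a), I would follow the description of ${\LRAII^{AII}}^{-1}$ recalled in the text: reverse Schensted insertion guided by $Q$, strip by strip, with each strip's reduced column replaced by $\redu^{-1}_t$. The tail strips have even length $l+1$ and $t=0$, so $\redu^{-1}_0(())=(1\cdots l+1)$. Thus each tail strip contributes the column $(1,2,\dots,l+1)$. Because these strips are the outermost part $\lambda/\mu^{(M)}$ and are processed last in the inverse (or first, depending on the ordering convention), I would show that reverse inserting full columns $1\cdots l+1$ is equivalent to concatenating them on the left. A column $1,\dots,l+1$ placed to the left of a tableau of length $\le l+1$ keeps it semistandard, and the Knuth class concatenation commutes with the column-word reading. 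This yields $S=(1\cdots l+1)^M S'$ with $S'={\LRAII^{AII}}^{-1}(S^{H,\mu},Q\setminus Q^{tail})$. The equivalence of $\k$-highest weight sets then follows from part (1) and the bijectivity of $\LRAII^{AII}$.

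I expect the main obstacle to be this commutation step, namely that the tail columns factor off cleanly under reverse insertion. My first approach would be to prove that column insertion of $(1,\dots,l+1)$ into any tableau with at most $l+1$ rows just prepends the column. Since every entry $i$ in row $i$ gets bumped down, the recording of this step is exactly a vertical strip of length $l+1$ in the first columns, and I would argue through this description.

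For (2)(b), with $l+1=2n$, I would use that each promotion $\pr_{a,b}$ in $\Phi$ acts on a tableau beginning with full columns $(1\cdots 2n)$ by fixing those columns. Proposition \ref{special}(c) gives $\Phi^{-1}(1\cdots 2n)^M=(1\cdots 2n)^M$. In jeu de taquin on the entries in $[a,b]$, the full columns contribute the constant block $a,\dots,b$ in the first $M$ columns. Step (2) of Definition \ref{def:promotion} cycles this block and the slides restore it, so these columns stay unchanged and do not interact with the rest. Hence $\Phi^{-1}(S)=(1\cdots 2n)^M\Phi^{-1}(S')$. The weight identities follow from Corollary \ref{prop:promn3} and the fact that a full column has zero $\widehat{\g}$-weight and zero $\k$-weight, as noted after \eqref{kweight}.
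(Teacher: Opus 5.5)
\textbf{Verdict.} Part (2)(b) is the paper's own argument, and (2)(a) is fine. Part (1), however, contains a step that fails: as a result the displayed equality in (1) is false as written, not just incompletely proved.

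\textbf{The step that fails in (1).} You write that since $\ell(\mu^{(M)})\le l+1$ is odd and all strips of length $l+1$ were removed, $\ell(\mu^{(M)})=l$. This does not follow.
\begin{enumerate}
\item Oddness is only available when $M<N$. If $M=N$ then $\mu^{(M)}=\mu$, and nothing constrains its parity.
\item Even when $M<N$, oddness only gives $\ell(\mu^{(M)})\le l$. The tail removes the last box of each of the rows $1,\dots,l+1$, so $\mu^{(M)}=\lambda-M\varpi_{l+1}$, and this can lose several rows at once.
\end{enumerate}
Concretely, take $n=3$, $l=5$, $\mu=(2,1,1)$, and let $S$ be the tableau with columns $(1,2,3,4,5,6)$, $(2,3,4)$, $(2,3,6)$. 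The first column reduces to $()$. Then $(2,3,4)=\redu^{-1}_1(2)$ reduces to $(2)$, and column inserting $2$ into $(2,3,6)$ gives $S^{H,\mu}$. Hence $Q^{AII}(S)\in Rec^{1,0}_6((3,3,3,1,1,1)/(2,1,1))$. But removing its tail leaves a recording tableau of outer shape $(2,2,2)$, of length $3\neq 5$.

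What is true, and all that (2) uses, is the version restricted to those $Q$ with $\ell(\lambda-M\varpi_{l+1})=l$, which is exactly the hypothesis of (2). For that version your argument is correct.

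Your gluing direction is also correct, and you do not need any characterization of $Rec_{2n}$ for it. The tableau $(1,\dots,l+1)^M\sqcup{\LRAII^{AII}}^{-1}(S^{H,\mu},Q')$ is semistandard, since row $k\le l$ of the second factor starts with an entry $\ge k$. The forward algorithm then removes the $M$ full columns with $t=0$ before doing anything else.

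\textbf{On (2)(a).} The commutation step you expected to be the main obstacle does not arise. In $\LRAII^{AII}$ (see Remark \ref{haha} and the computations in Section \ref{sec:last}), each forward step takes the leftmost column, reduces it, and column inserts the reduced column into the remaining tableau. A step with $t=0$ therefore just deletes a leftmost column $(1,\dots,l+1)$. In the inverse, where the tail steps come last, it just prepends that column. Your auxiliary lemma (column inserting $(1,\dots,l+1)$ prepends it) is true but not needed. Also, the tail strips of $Q$ consist of the last boxes of rows $1,\dots,l+1$, not boxes in the first columns.

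\textbf{On (2)(b).} The paper justifies this only by noting that in every jeu de taquin step of $\Phi^{-1}$ the entries of the full columns can always slide down rather than right. To turn your ``the slides restore the block'' into a proof, use three facts:
\begin{enumerate}
\item On ties, the lower entry moves up.
\item Column strictness of the part already processed forces the right neighbour of the moving box in a full column to be at least the lower neighbour. So that box always moves down.
\item Boxes starting to the right of column $M$ never move left.
\end{enumerate}
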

\begin{proof} To justify point $(2)$, $(b)$, we notice that in the \emph{jeu} \emph{de} \emph{taquin} steps that $\Phi^{-1}$ requires, the entries in the first column have always the option to slide down instead of sliding to the right. Thus
$$\Phi^{-1}(S)=\Phi^{-1}((1,2,\dots  \, 2n)^M )\Phi^{-1}(S')=(1,2,\dots  \, 2n)^M \Phi^{-1}(S')=(1,2,\dots\, n\,\overline{n\dots 2\, 1})^M \Phi^{-1}(S').$$
\end{proof}

\begin{obs}\label{ha} Lemma \ref{lem:basic}, point $(2)$, $(b)$,  is not true for $(12\dots l)$ with $l<2n$. Let $n=3$ and $\lambda=(8,6,5,1)$, $l=\ell(\lambda)=4<6$, and

 $S=\YT{0.15in}{}{
 {1,{1},1,    {{ 2}},2,    2,   2,2},
 {2,{2},{2},3,  {3},  { 3}},
 {3, {3},4,  4,  6},
 {4},
}\in SST^{\mathfrak{k}-lw}_{6}(\lambda,\mu)$  of $\k$-weight $\mu=(5,2,1)$ where

 $P^{AII}(S)=S^{H,\mu}=\YT{0.15in}{}{
 {2,    2,   2,2,2},
 {3,  {3}},
 {6 },
 },$ and $Q^{AII}(S)\in Rec^{1,0}(\lambda,\mu)$ because all reduction maps $\redu$  used in the procedure to compute $P^{AII}(S)$  are of the form $\redu_l:SST_{2n}(\varpi_l)\rightarrow SpT_{2n}(\varpi_t)$ with $t\in \{0,1\}$.

Then, for $\Phi:=\pr_{1,6}\circ \pr_{3,5}\circ\pr_{3,4}$,

\begin{align}\label{Stypel=41example}
&\Phi^{-1}(S)=\Phi^{-1}\YT{0.15in}{}{
 {1,{1},1,    {{ 2}},2,    2,   2,2},
 {2,{2},{2},3,  {3},  { 3}},
 {3, {3},4,  4,  6},
 {4},
}
 =\YT{0.15in}{}{
 {1,\mathbf{1},1,    {{ 1}},1,    1,   1,1},
 {\mathbf{2},{2},{2},2,  {2},  { \overline 1}},
 {\mathbf{3}, \mathbf{\red{\overline 2}},{\overline 2},  {\overline 2},\overline 1},
 {\overline 1},
} \quad 
\end{align} and $\wt_{\widehat{\mathfrak{g}}}\Phi^{-1}(S)=\mu $.

Note $$\Phi^{-1}(1234)=(1256)=(1 2 \overline 2  \overline 1)\neq (123\overline 1) \mbox{ the first column of $\Phi^{-1}(S)$}$$
and $\wt_{\widehat{\mathfrak{g}}}\Phi^{-1}(1234)=\wt_{\widehat{\mathfrak{g}}}(1 2 3 \overline 1)=(0,1,1)$ while $\wt_{\widehat{\mathfrak{k}}}(1234)=0$. This means that if $S'$ is obtained from $S$ by deleting its first column, as below, $\Phi^{-1}(S')$ can not  be obtained from $\Phi^{-1}(S)$ by deleting the first column

However,  if $S'$ is obtained from $S$ by deleting its first column of $S$,
$$S'=\YT{0.15in}{}{
 {{1},1,    {{ 2}},2,    2,   2,2},
 {{2},{2},3,  {3},  { 3}},
 { {3},4,  4,  6},
}\in SST^{\mathfrak{k}-lw}_{6}(\lambda- \varpi_4,\mu)$$ has shape length $l-1=3$ and the  $\k$-weight $\mu=(5,2,1)$ as $S$.

Since $\Phi^{-1}(S')$ cannot be obtained from $\Phi^{-1}(S)$ just by deleting the first column $(1,2, 3, \overline 1)$ of $S$ because its $\widehat{\mathfrak{g}}$-weight is $(0,1,1) \neq 0$,
notably $\Phi^{-1}$  recovers the lost $\widehat{\mathfrak{g}}$-weight by changing the second  column $12\overline 2$ of $\Phi^{-1}(S)$ into $123$,
\begin{align}\label{typel=42example}&\Phi^{-1}(S')=
\Phi^{-1}\YT{0.15in}{}{
 {{1},1,    {{ 2}},2,    2,   2,2},
 {{2},{2},3,  {3},  { 3}},
 { {3},4,  4,  6},
}=\YT{0.15in}{}{
 {\mathbf{1},1,    {{ 1}},1,    1,   1,1},
 {\mathbf{2},2,  {2}, 2, { \overline 1}},
 {\mathbf{3},{\overline 2},  {\overline 2},\overline 1},
}
\end{align}
Now $$\wt_{\widehat{\mathfrak{g}}}\Phi^{-1}(S')=\wt_{{\mathfrak{\k}}}(S')=\mu=\wt_{\widehat{\mathfrak{g}}}\Phi^{-1}(S)=
\wt_{{\mathfrak{\k}}}(S)$$
Conclusion $\Phi^{-1}(S')$ is obtained from $\Phi^{-1}(S)$ by deleting the first column $(123\overline 1)$ and by replacing the column $12 \overline 2$ by $123$.

The  detailed computation of $\Phi^{-1}(S)$, $\Phi^{-1}= \pr_{3,4}^{-1}\circ \pr_{3,5}^{-1}\circ \pr_{1,6}^{-1}$, will clarify why  this is so

$$S=\YT{0.15in}{}{
 {1,{1},1,    {{ 2}},2,    2,   2,2},
 {2,{2},{2},3,  {3},  { 3}},
 {3, {3},4,  4,  6},
 {4},
}\underset{\pr_{1,6}^{-1}}\rightarrow \YT{0.15in}{}{
 {1,{1},1,    {{ 1}},1,    1,  1,1 },
 {2,{2},{2},2,  {2},  { 6}},
 {\mathbf{3}, {\bf 3},\bf 3,  \bf 5,  6},
 {6},
}\quad  \bf{3335} \underset{\pr_{3,5}^{-1}}\rightarrow \bf{555 4}\rightarrow \bf{4555}
$$
\begin{align*}&S=\YT{0.15in}{}{
 {1,{1},1,    {{ 2}},2,    2,   2,2},
 {2,{2},{2},3,  {3},  { 3}},
 {3, {3},4,  4,  6},
 {4},
}\underset{\pr_{1,6}^{-1}}\rightarrow \YT{0.15in}{}{
 {1,{1},1,    {{ 1}},1,    1,  1 ,1},
 {2,{2},{2},2,  {2},  { 6}},
 {\mathbf{3}, {\bf 3},\bf 3,  \bf 5,  6},
 {6},
}\underset{\pr_{3,5}^{-1}}\rightarrow \YT{0.15in}{}{
 {1,{1},1,    {{ 1}},1,    1,  1 ,1},
 {2,{2},{2},2,  {2},  { 6}},
 {\mathbf{4}, {\bf 5},\bf 5,  \bf 5,  6},
 {6},
}\\
&\underset{\pr_{3,4}^{-1}}\rightarrow \YT{0.15in}{}{
 {1,{1},1,    {{ 1}},1,    1,  1,1 },
 {2,{2},{2},2,  {2},  { 6}},
 {\mathbf{3}, { 5}, 5,   5,  6},
 {6},
}
=\YT{0.15in}{}{
 {1,{\bf 1},1,    {{ 1}},1,    1,  1 ,1},
 {\bf 2,{2},{2},2,  {2},  { \overline 1}},
 {\bf 3, {\overline 2},\overline 2, \overline 2,  \overline 1},
 {\overline 1},
}
\end{align*}

$$S'=\YT{0.15in}{}{
 {{1},1,    {{ 2}},2,    2,   2,2},
 {{2},{2},3,  {3},  { 3}},
 { {3},4,  4,  6},
}\underset{\pr_{1,6}^{-1}}\rightarrow \YT{0.15in}{}{
 {{1},1,    {{ 1}},1,    1,  1 ,1},
 {{2},{2},2,  {2},  { 6}},
 { {\bf 3},\bf 3,  \bf 5,  6},
}\quad  \bf{335} \underset{\pr_{3,5}^{-1}}\rightarrow \bf{55 4}\rightarrow \bf{455}
$$
\begin{align*}&S'=\YT{0.15in}{}{
 {{1},1,    {{ 2}},2,    2,   2,2},
 {{2},{2},3,  {3},  { 3}},
 { {3},4,  4,  6},
}\underset{\pr_{1,6}^{-1}}\rightarrow \YT{0.15in}{}{
 {{1},1,    {{ 1}},1,    1,  1 ,1},
 {{2},{2},2,  {2},  { 6}},
 { {\bf 3},\bf 3,  \bf 5,  6},
}\underset{\pr_{3,5}^{-1}}\rightarrow \YT{0.15in}{}{
 {{1},1,    {{ 1}},1,    1,  1 ,1},
 {{2},{2},2,  {2},  { 6}},
 {\mathbf{4}, {\bf 5},\bf 5,    6},
}\\
&\underset{\pr_{3,4}^{-1}}\rightarrow \YT{0.15in}{}{
 {{1},1,    {{ 1}},1,    1,  1,1 },
 {{2},{2},2,  {2},  { 6}},
 {\mathbf{3}, { 5}, 5,     6},
}
=\YT{0.15in}{}{
 {{\bf 1},    {{ 1}},1,    1,  1 ,1,1},
 {\bf 2,{2},2,  {2},  { \overline 1}},
 {\bf 3, \overline 2, \overline 2,  \overline 1},
}
\end{align*}

\end{obs}
\subsubsection{Case n=3}
Let $n=3$. Thanks to Lemma \ref{lem:basic}, $(1)$, $(2)$, the following is a generalization of \cite[Corollary 6]{azreco} for any $\ell(\mu)\le \ell(\lambda)\le 2n$ with $\ell(\mu)\le n$.

\begin{thm}\label{thm:n=3slack1} Let $n=3$. Consider $S^{H,\mu}$  the $\mathfrak{k}$-highest weight tableau
in $SpT_6(\mu)$. Let  $Q\in  Rec^{1,0}_{6}(\lambda/\mu)$.
Then,
${\LRAII^{AII}}^{-1}(S^{H,\mu},Q)$ returns   the $\mathfrak{k}$-highest  weight tableau in
$SST_{6}(\lambda)$ with ${\mathfrak{k}}$-weight  $\mu$    in either  form below, with the following caption, circled elements indicate bumped entries from $S^{H,\mu}$, blue circle indicates the column $\redu_1^{-1}(2)$, orange circle indicates the column $\redu_1^{-1}(3)$ and brown circle indicates the column $\redu_1^{-1}(6)$.

\begin{enumerate}
\item  For $\ell(\lambda)=6$ the $\k$-highest tableaux are the ones in \cite[Corollary 6]{azreco}.
These tableaux satisfy linear inequalities  on the multiplicities 
of the following columns: if $m_{12356}$ is the multiplicity of $\redu^{-1}(3)=(12356)$, $m_{12346}$ the multiplicity of $\redu^{-1}(6)=(12346)$,
  $m_{23}$ the multiplicity of $(2,3)\in SpT_6(\varpi_2)$,  and $m_{2}$ the multiplicity of $(2)\in SpT_6(\varpi_1) $,
 then

\begin{align}m_{12{\bf 3}56}+m_{1235}\le m_{\bf 2} \mbox{ and }0\le m_{1234\bf 6}-m_{1235}-m_{2345}\le m_{2\bf 3}.\label{Yineqn=3}
\end{align}

\item If $\ell(\lambda)=5$,  Lemma \ref{lem:basic} guarantees that the  $\k$-highest weight tableaux are the same as for $\ell(\lambda)=6$ with  $M=0$ in  $(123456)^M$ and the  linear inequalities that do satisfy are the same \eqref{Yineqn=3}.

\item If $\ell(\lambda)=4$, one has

\begin{align}\label{typel=41}
&\YT{0.15in}{}{
 {1,\cdots,1,{1},\cdots,1,1,\cdots,1,    \blue{\circled{\bf 2}},\cdots, \blue{\circled{\bf 2}},{2},\cdots,2,    2,\cdots, 2,   2,\cdots,{ 2}},
 {2,\cdots,2,{2},\cdots,2,{2},\cdots,2,3,\cdots,3,  {3},\cdots,3,  { 3},\cdots, 3},
 {3,\cdots,3, \red{\circled{\bf{3}}},\cdots, \red{\circled{\bf 3}},4,\cdots,4,  4,\cdots,4,\ora 6,\cdots,\ora 6},
 {4,\cdots,4},
}
\end{align}

or

\begin{align}\label{typel=42}
&\YT{0.15in}{}{
 {1,\cdots,1,{1},\cdots,1,{1},\cdots,1,1,\cdots,1,  2,\cdots, 2, 2,\cdots, 2, 2,\cdots,{2}},
 {2,\cdots,2,{2},\cdots,2,{2},\cdots,2,2,\cdots,2,  {3},\cdots,3,   {3},\cdots,3},
 {3,\cdots,3,\red{\circled{\bf{3}}},\cdots,\red{\circled{\bf{3}}},4,\cdots,4,  \ora 6,\cdots,\ora 6,\ora 6,\cdots,\ora 6},
 {4,\cdots,4},
}
\end{align}

or
\begin{align}\label{typel=43}
&\YT{0.15in}{}{
 {1,\cdots,1,{1},\cdots,1,{1},\cdots,1,1,\cdots,1,{1},\cdots,1,    2,\cdots, 2, 2,\cdots, 2},
 {2,\cdots,2,{2},\cdots,2,{2},\cdots,2,2,\cdots,2,  {2},\cdots,2,   {3},\cdots,3},
 {3,\cdots, 3,\red{3},\cdots,\red 3,4,\cdots,4,  \ora 6,\cdots,\ora 6},
 {4,\cdots,4},
}
\end{align}
and all these tableaux satisfy the linear inequalities
\begin{align}\label{horseineq=64}m_{123}\le m_2, \quad m_{124}+m_ {126}\le m_{23}
\end{align}

  \item If $\ell(\lambda)=3$, Lemma \ref{lem:basic} guarantees that the  $\k$-highest weight tableaux are the same tableaux as immediately above with $M=0$ in  $(1234)^M$. The linear inequalities that they do satisfy are the same \eqref{horseineq=64}.

  \item If $\ell(\lambda)=2$, either $\ell(\mu)=2$  and
\begin{align}\label{typel=2}
&\YT{0.15in}{}{
 {{1},\cdots,1,    2,\cdots, 2, 2,\cdots, 2},
 {  {2},\cdots,2,   {3},\cdots,3},
}
\end{align} or $\ell(\mu)=1$ and respectively \begin{align}\label{typel=2+}
&\YT{0.15in}{}{
 {{1},\cdots,1,     2,\cdots, 2},
 {  {2},\cdots,2},
}
\end{align}
with no constraints on the multiplicity of the columns.

      \item If $\ell(\lambda)=1$, then $\ell(\mu)=\ell(\lambda)=1$, and we are in the previous case \eqref{typel=2} with $M=0$ in $(12)^M$. Then $Q=\emptyset$ and $S^{H,\mu}=2^M$, $M\ge 0$, ${\LRAII^{AII}}^{-1}(S^{H,\mu},\emptyset)=S^{H,\mu}.$
\end{enumerate}
\end{thm}
\begin{proof} All these tableaux $T$ above can be checked to satisfying $P^{AII}(T)=S^{H,\mu}\in SpT_6(\mu)$ with $\mu=\wt_\k(S)$, using the linear inequalities, and that the reduce map in this computation is always of the type \begin{align}
\redu_l:SST_{2n}(\varpi_l)&\rightarrow SpT_{2n}(\varpi_t)
\end{align} with $t\in \{0,1\}$ which guarantees that the recording tableaux belong to $Rec^{1,0}$.

$(1)$,  $(2)$ From inequalities \eqref{Yineqn=3}, we may check that $ P^{AII}(T)=S^{H,\mu}$ where $\mu$ is as below. For instance,   the tableaux in \cite[Corollary 6]{azreco}, $(108)-(111)$, satisfy
$$\mu=\wt_\k(S)=(m_2+m_{23}+m_{236}+m_{2345},m_{23}+m_{236}+m_{1235}+m_{12356},m_{236}+m_{1234\bf 6}-m_{1235}-m_{2345})$$
Inequalities \eqref{Yineqn=3} ensure that

$$m_2+m_{23}+m_{236}+m_{2345}\ge m_{23}+m_{236}+m_{1235}+m_{12356}\ge m_{236}+m_{1234\bf 6}-m_{1235}-m_{2345}$$
 since $m_{1235}+m_{12356}\le m_2$ and $0\le m_{1234\bf 6}-m_{1235}-m_{2345}\le m_{23} $.

Note that for $(2)$, $\wt_\k(123456)=(0,\dots,0)$ and $\mu=\wt_\k(S)=$$\wt_\k(S\setminus (123456)^M)$, and $ m_{123456}$ has no contribution for the inequalities in \eqref{Yineqn=3}

$(3)$, $(4)$ Similarly, we may check that $ P^{AII}(S)=S^{H,\mu}$ providing inequalities \eqref{horseineq=64} hold .  For  \eqref{typel=41},
$$\mu=\wt_\k(S)=(m_2+m_{236}+m_{23}-m_{124},m_{23}-m_{124}+m_{236}+m_{123},m_{236})\in P_{\le 3}$$
where $m_2\ge m_{123}$.

For \eqref{typel=42},
$$\mu=\wt_\k(S)=(m_2+m_{236}+m_{23}-m_{124},m_{23}-m_{124}+m_{236}+m_{123},m_{236}+m_{126})\in P_{\le 3}$$
where $m_{124}+m_ {126}\le m_{23}\Leftrightarrow m_{23}-m_{124}\ge m_{126}$.

For \eqref{typel=43},
$$\mu=\wt_\k(S)=(m_2+m_{23},m_{23}-m_{124}+m_{123},m_{126})\in P_{\le 3}$$

where $m_2\ge m_{123}$ and $m_{23}-m_{124}\ge m_{126}$.

Similarly for the remaining cases $(5)$, $(6)$.

\end{proof}

We now compute for $n=3$,
$$\Phi^{-1}({\LRAII^{AII}}^{-1}(\{S^{H,\mu}\}\times {Rec}_{6}^{1,0}(\lambda/\mu))\subset SST_6^{{\widehat{\mathfrak{g}}-dom}}(\lambda,\mu),$$
where $\Phi:=\pr_{1,6}\circ \pr_{3,5}\circ\pr_{3,4}$. Unfortunately Lemma \ref{lem:basic} $(2)$, $(a)$ is not preserved, in general, when we apply $\Phi^{-1}$ but this is what should be because all these tableaux are such that $P^{AII}(T)=S^{H,\mu}$. Hence $\wt_{{\mathfrak{\k}}}(T)=\mu=\wt_{\widehat{\mathfrak{\g}}}(\Phi^{-1}(T))$.

\begin{obs}\label{haha}This example clarifies why we  impose the linear inequality $m_{123\overline 1}\le m_{12\overline 2}$ in the following $\k$-highest weight tableau. In other words, it explains that the linear inequality imposes that the recording  tableau belongs to $Rec^{1,0}$.
Let $n=3$, $\lambda=(6,4,3,1)$, $\mu=(3,2,1)$  and

 $S=\YT{0.15in}{}{
 {1,\mathbf{1},1,    {{ 1}},1 ,1   },
 {\mathbf{2},{2},{\bf 2}, { \overline 1}},
 {\mathbf{3},  {\overline 2},\overline 1},
 {\overline 1},
}\in SST_6^{{\widehat{\mathfrak{g}}-dom}}(\lambda,\mu)
$ that satisfies the linear inequalities
$$m_{123\overline 1}\le m_{12\overline 2},\quad m_{12\overline 1}\le m_{1}$$

Then
$$\Phi(S)=\YT{0.15in}{}{
 {1,1,    {{ 1}},2, 2,2   },
 {{2},{2},{2}, { 3}},
 {{3},  { 3}, 6},
 {4},
}\in SST_6^{{{\mathfrak{k}}}}(\lambda,\mu),
$$

and $Q^{AII}(\Phi(S))\in Rec_6^{1,0}(\lambda,\mu)$.

However if we ignore the inequality $m_{123\overline 1}\le m_{12\overline 2}$, then we may consider

$R=\YT{0.15in}{}{
 {1,\mathbf{1},1,    {{ 1}},1    },
 {\mathbf{2},{2}, { \overline 1}},
 {\mathbf{3}, \overline 1},
 {\overline 1},
}\in SST_6^{{\widehat{\mathfrak{g}}-dom}}(\lambda-\varpi_3,\mu')
$ with $\mu'=(2,2,1)$ and linear inequalities $m_{123\overline 1}+m_{12\overline 1}\le m_{1}$
where
$$\Phi(R)=\YT{0.15in}{}{
 {1,1,    {{ 1}},2, 2  },
 {{2},{2},{2}},
 {{3},  { 3}},
 {6},
}\in SST_6^{{{\mathfrak{k}}}}(\lambda-\varpi_3,\mu'),
$$ but $Q^{AII}\Phi(R)\notin  Rec_6^{1,0}(\lambda-\varpi_3,\mu')$ because

$\redu(1,2,3,6)=(3,6)\Rightarrow$ that the recording tableau is not $1$-$0$- slack. More precisely,
$$\Phi(R)=\YT{0.15in}{}{
 {1,1,    {{ 1}},2, 2  },
 {{2},{2},{2}},
 {{3},  { 3}},
 {6},
}\underset{P^{AII}}\rightarrow
\YT{0.15in}{}{
 {1,    {{ 1}},2, 2  },
 {{2},{2}},
 {{3},  { 3}},
 {6},
}\underset{P^{AII}}\rightarrow
\YT{0.15in}{}{
 {1,    2, 2  },
 {{2},{3}},
 {{3}},
 {6},
}\underset{P^{AII}}\rightarrow
\YT{0.15in}{}{
 {   2, 2  },
 {{3},  { 3}},
 {6},
}=S^{H,\mu'}=P^{AII}(\Phi(R)
$$
\end{obs}

\begin{thm}\label{thm:1} Let $n=3$ and $S^{H,\mu}$ the $\mathfrak{k}$-highest weight symplectic tableau  in $ SpT_6(\mu)$. Consider the $\mathfrak{k}$-highest weight tableaux in  ${\LRAII^{AII}}^{-1}(\{S^{H,\mu}\}\times {Rec}_{6}^{1,0}(\lambda/\mu))\subseteq \SST^{\mathfrak{k}-hw}(\lambda,\mu)\subseteq SST_6(\lambda)$   as displayed in Theorem \ref{thm:n=3slack1}. Then
 for $T$ an $\mathfrak{k}$-highest weight tableau in ${\LRAII^{AII}}^{-1}(\{S^{H,\mu}\}\times {Rec}_{6}^{1,0}(\lambda/\mu))$,

$$\Phi^{-1}(T)\in SST_6^{{\widehat{\mathfrak{g}}-dom}}(\lambda,\mu)\subseteq SST_6(\lambda)$$
 is a $\widehat{\mathfrak{g}}$-dominant tableau of one of either forms below (recall $\overline i=2n-i+1$ and $n=3$) satisfying the linear inequalities as follows:

 \begin{enumerate}
\item For $\ell(\lambda)=6$,

\begin{align}\label{ghwineq}m_{1{\oo 2}3\overline 3\overline 1}\le m_{\oo 1}, \quad m_{12{\red 3}\overline 2\overline 1}\le m_{\red 12},\quad m_{1\bro 2{3}\overline 1}\le m_{\bro 12\overline 2}
\end{align}

where
\begin{align}\Phi^{-1}(T)=
\YT{0.15in}{}{
 {1,\cdots,1,               1,\cdots,1,  {1},\cdots,{1},1,\cdots, 1, 1,\cdots, 1,\red 1,\cdots,{\red 1},\oo 1,\cdots,\oo 1},
 {2,\cdots,2,               {2},\cdots,2,     { \oo 2},\cdots,\oo 2,{2},\cdots,{2},{2},\cdots,{2},\red 2,\cdots,\red 2},
 {3,\cdots,3,               {3},\cdots,{3},   3,\cdots, 3,{\red 3},\cdots,{\red 3},{3},\cdots,{3}},
 {\overline 3,\cdots,\overline 3, \overline 3,\cdots,\overline 3,  \overline 3,\cdots,\overline 3,\overline 2,\cdots,\overline 2},
 {\overline 2,\cdots,\overline 2, \overline 2,\cdots,\overline 2, \overline 1 ,\cdots,\overline1,\overline 1,\cdots,\overline 1},
 {\overline 1,\cdots,\overline 1},
}\label{ghtype31}
\end{align}

or
\begin{align}\Phi^{-1}(
T)=\YT{0.15in}{}{
 {1,\cdots,1,{1},\cdots,1,{1},\cdots,1, {1},\cdots,1,{1},\cdots,{1},1, \cdots,1,\bro 1,\cdots, \bro 1,\red 1,\cdots,\red 1, \oo 1,\cdots,{\oo 1}},
 {2,\cdots,2,{2},\cdots,2, {\oo 2},\cdots,\oo 2, {2},\cdots,2,2,\cdots,2,{\bro2},\cdots,\bro 2,\bro 2,\cdots,{\bro 2},\red 2,\cdots,\red 2},
 {3,\cdots,3,{3},\cdots,3,{3},\cdots,3,  \red 3,\cdots,\red 3,\red 3,\cdots,\red 3, {\bro 3},\cdots,{ \bro 3},\bro \overline  2,\cdots,{\bro\overline 2}},
 {\overline 3,\cdots,\overline 3,\overline 3,\cdots,\overline 3, {\overline 3},\cdots,\overline 3, {\overline 2},\cdots,\overline 2,\overline 2,\cdots,\overline 2, \overline 1,\cdots,\overline 1},
 {\overline 2,\cdots,\overline 2,\overline 2,\cdots,\overline 2,\overline 1,\cdots,\overline 1, \overline 1 ,\cdots,\overline 1,\overline 1,\cdots,\overline 1},
 {\overline 1,\cdots,\overline 1},
 }\label{ghtype33}
 \end{align}

or

 \begin{align}\Phi^{-1}(T)
=\YT{0.15in}{}{
 {1,\cdots,1,{1},\cdots,1, {1},\cdots,1,{1},\cdots,1,1,\cdots,1,1,\cdots,1, \bro 1,\cdots, \bro 1, \red 1,\cdots,{\red 1},\oo 1,\cdots,{\oo 1}},
 {2,\cdots,2,{2},\cdots,2,{\oo 2},\cdots,\oo 2, { 2},\cdots, 2, \bro 2,\cdots,\bro 2,2,\cdots,2,\bro 2,\cdots,{\bro 2},\red 2,\cdots,\red 2},
 {3,\cdots,3,{3},\cdots,3, 3,\cdots,3,\red 3,\cdots,\red 3,   \bro 3,\cdots,\bro 3,\overline 2,\cdots,\overline 2, \bro \overline 2,\cdots,\bro\overline 2},
 {\overline 3,\cdots,\overline 3,\overline 3,\cdots,\overline 3, \overline 3,\cdots,\overline 3,\overline 2,\cdots,\overline 2,\overline 1,\cdots,\overline 1,\overline 1,\cdots ,\overline 1},
 {\overline 2,\cdots,\overline 2,\overline 2,\cdots,\overline 2, {\overline 1},\cdots,\overline 1,{\overline 1},\cdots,\overline 1},
 {\overline 1,\cdots,\overline 1},
}\label{ghtype34}
\end{align}

\item If $\ell (\lambda)=5$, Lemma \ref{lem:basic}, $(2)$, $(b)$ guarantees that the  tableaux are the same as above with $M=0$ in $(123\overline{321})^M$ and the same linear inequalities \eqref{ghwineq} since $m_{123\overline{321}}$ does not impose constraints in the previous tableaux.

\item If $\ell (\lambda)=4$, the following linear inequalities hold

\begin{align}\label{gineql=4}m_{12\overline 1}\le m_1,\quad m_{123\overline 1}\le m_{12\overline 2}
\end{align}

where
\begin{align}\label{gl=41}\Phi^{-1}(T)=
&\YT{0.15in}{}{
 {1,\cdots,1,1,\cdots,1,  1,\cdots, 1,   1,\cdots, 1,{1},\cdots,1,       1,\cdots,{ 1}},
 {2,\cdots,2,2,\cdots,2,{2},\cdots,2,2,\cdots,2,   { \overline 1},\cdots, \overline 1},
 {3,\cdots,3, \overline 2,\cdots, \overline 2, \overline 2,\cdots,\overline 2,  \overline 1,\cdots,\overline 1 },
 {\overline 1,\cdots,\overline 1},
}
\end{align}

or

\begin{align}\Phi^{-1}(T)=&\YT{0.15in}{}{
 {1,\cdots,1,1,\cdots,1, 1,\cdots,1,    1,\cdots, 1,{1},\cdots,1,    1,\cdots, 1, 1,\cdots, 1},
 {2,\cdots,2,{2},\cdots,2,{2},\cdots,2,2,\cdots,2, 2,\cdots,2, { \overline 1},\cdots, \overline 1},
 {3,\cdots,3,  3,\cdots,3,\overline 2,\cdots,\overline 2,  \overline 2,\cdots,\overline 2, \overline 1,\cdots, \overline 1},
 {\overline 1,\cdots,\overline 1},
}
\end{align}

\item $\ell (\lambda)=3$, one has the linear inequality

\begin{align}m_{12\overline 1}\le m_1
\end{align}

and

\begin{align}\label{gl=33}\Phi^{-1}(T)=
&\YT{0.15in}{}{
 {1,\cdots,1,1,\cdots,1,     1,\cdots, 1,    1,\cdots, 1,   1,\cdots,{ 1}},
 {2,\cdots,2,{2},\cdots,2,2,\cdots,2,    { \overline 1},\cdots, \overline 1},
 {3,\cdots,3, \overline 2,\cdots,\overline 2,  \overline 1,\cdots,\overline 1},
}
\end{align}

\item $\ell (\lambda)=2$,

\begin{align}\label{gl=2}\Phi^{-1}(\eqref{typel=2})=
&\YT{0.15in}{}{
 {1,\cdots,1,1,\cdots,1,     1,\cdots, 1 },
 {2,\cdots,2,    { \overline 1},\cdots, \overline 1},
}
\end{align}
with no constraints on the multiplicity of the columns.

\item $\ell(\lambda)=1$ ($\Rightarrow \mu=1$)

$$\Phi^{-1}(S^{H,\mu})=\Phi^{-1}(2^M)=(1^M), \quad M\ge 0.$$
\end{enumerate}
\end{thm}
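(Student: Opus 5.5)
The proof goes case by case on $\ell(\lambda)$. In each case we take the explicit $\mathfrak{k}$-highest weight tableaux $T$ listed in Theorem \ref{thm:n=3slack1}, whose column multiplicities satisfy \eqref{Yineqn=3} or \eqref{horseineq=64}, and compute $\Phi^{-1}(T)=\pr^{-1}_{3,4}\circ\pr^{-1}_{3,5}\circ\pr^{-1}_{1,6}(T)$ directly from Definition \ref{def:promotion}.

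For each operator $\pr^{-1}_{a,b}$ only the subtableau with entries in $[a,b]$ moves. We will follow it block by block, as in the computations of Remark \ref{ha}:
\begin{itemize}
\item $\pr^{-1}_{1,6}$ turns each $1$ into $6$ and lowers every other entry by one. The $6$'s then slide out to the outer corners.
\item $\pr^{-1}_{3,5}$ acts on the $\{3,4,5\}$ part of rows three and four. Within a row this is a rotation, for example $3335\mapsto 4555$.
\item $\pr^{-1}_{3,4}$ finally swaps the $3$/$4$ pattern.
\end{itemize}
The jeu de taquin slides are determined by comparing the multiplicities of neighbouring column types. The inequalities of Theorem \ref{thm:n=3slack1} fix which slides occur, so each family \eqref{typel=41}–\eqref{typel=43} (and the length-six families of \cite[Corollary 6]{azreco}) is sent to exactly one of the displayed shapes \eqref{ghtype31}–\eqref{gl=2}. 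We will then translate the $\mathfrak{k}$-side multiplicities into $\widehat{\mathfrak{g}}$-side ones. For instance, a column $12\overline 2$ of $\Phi^{-1}(T)$ comes from a column $23$ of $T$, and $123\overline 1$ comes from $1234$ together with the circled bumped $3$'s. Under this dictionary \eqref{Yineqn=3} becomes \eqref{ghwineq} and \eqref{horseineq=64} becomes \eqref{gineql=4}.

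The bookkeeping is done by Lemma \ref{lem:basic}.
\begin{itemize}
\item For $\ell(\lambda)=6$, Lemma \ref{lem:basic}(2)(b) splits off $(123456)^M\mapsto(123\overline3\overline2\overline1)^M$. So we only treat $M=0$, which at the same time gives case (2), $\ell(\lambda)=5$.
\item Cases $\ell(\lambda)=3$ and $1$ reduce to $4$ and $2$ at the level of $T$ by Lemma \ref{lem:basic}(2)(a). However, Remark \ref{ha} shows that $\Phi^{-1}$ does not commute with deleting the column $1234$. So for $\ell(\lambda)=3$ we will recompute $\Phi^{-1}$ separately. We expect the same mechanism as in Remark \ref{ha}: a column $12\overline2$ is converted into $123$. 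In particular the constraint $m_{123\overline1}\le m_{12\overline2}$ disappears, leaving only $m_{12\overline1}\le m_1$.
\item Cases $\ell(\lambda)=2$ and $1$ are immediate. Here $\Phi^{-1}$ sends $2\mapsto1$ and $23\mapsto1\overline1$, using Proposition \ref{special}(b).
\end{itemize}

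As a consistency check we will verify dominance of each output. Reading the word right to left, column by column, every prefix must have a partition as its weight, and the final weight must be $\mu=\wt_{\mathfrak k}(T)$ (Corollary \ref{prop:promn3}). Each of the listed inequalities is exactly the condition that the prefix ending at the corresponding coloured block stays dominant, as Remark \ref{haha} illustrates. This also gives the converse: any tableau of the displayed form satisfying the inequalities is the image of a tableau in $Rec^{1,0}$-form.

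The main obstacle is the $\ell(\lambda)=6$ case. There the three families \eqref{ghtype31}, \eqref{ghtype33}, \eqref{ghtype34} arise from the several forms in \cite[Corollary 6]{azreco}, and the jeu de taquin in $\pr^{-1}_{1,6}$ interleaves the blocks $12356$, $12346$, $1235$, $2345$ nontrivially. I would first carry out the computation on a generic tableau with all multiplicities positive, tracking the path of each slid $6$. Boundary cases, where some multiplicity vanishes, would then be checked separately to confirm that they land in the same displayed shape with those blocks removed.
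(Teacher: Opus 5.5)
The core of your plan matches the paper's. You apply $\Phi^{-1}=\pr^{-1}_{3,4}\circ\pr^{-1}_{3,5}\circ\pr^{-1}_{1,6}$ to the families of Theorem \ref{thm:n=3slack1}, split off $(123456)^M$ by Lemma \ref{lem:basic}(2)(b), and recompute at $\ell(\lambda)=3$ because of Remark \ref{ha}; the paper calls this an easy exercise. The paper then checks dominance through the Schumann--Torres inequalities \eqref{ineqdom}, after viewing the tableaux in $\mathfrak{gl}_{12}$ where they are in the stable range. You read prefixes directly instead, which is fine; dominance is automatic by Corollary \ref{prop:promn3} anyway.

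The genuine error is your claim that each listed inequality is exactly a prefix-dominance condition, and the converse you derive from it. Remark \ref{haha} refutes this rather than illustrating it. There $R$, with columns $123\overline1,12\overline1,1\overline1,1,1$, is $\widehat{\mathfrak g}$-dominant although $m_{123\overline1}=1>0=m_{12\overline2}$. Moreover $Q^{AII}(\Phi(R))\notin Rec^{1,0}_6$, because $\mathrm{red}(1236)=(36)$. Likewise, in the paper's check of \eqref{ghtype33} the corresponding prefix condition is only the weaker $m_1+m_{12\overline2}-m_{123\overline1}\ge 0$. So $m_{123\overline1}\le m_{12\overline2}$ encodes membership in $Rec^{1,0}$, not dominance. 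It must be obtained by transporting the $\mathfrak k$-side constraints through the promotion computation. A converse, which the theorem does not assert, would require applying $\Phi$ and checking that every reduction in computing $P^{AII}$ has $t\in\{0,1\}$.

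The dictionary you sketch is also wrong wherever you make it explicit, so the claimed term-by-term translation of \eqref{horseineq=64} into \eqref{gineql=4} is unsupported.
\begin{itemize}
\item $\Phi^{-1}$ preserves the shape, hence the number of columns of each length, so $12\overline2$ cannot come from $23$. In Remark \ref{ha}, the length-three columns $123,124,234,236$ of $S$ become $12\overline2,12\overline2,12\overline2,12\overline1$, and $23$ becomes $1\overline1$.
\item For $\ell(\lambda)=2$, Proposition \ref{special}(b) gives $\Phi^{-1}(23)=12$, not $1\overline1$. It also does not cover the columns $12$, which are not fixed: $\Phi^{-1}(12)=1\overline1$, as in Remark \ref{ha}. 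The jeu de taquin in $\pr^{-1}_{1,6}$ gives $(12)^a(23)^b(2)^c\mapsto(12)^b(1\overline1)^a(1)^c$.
\item In general the transfer is not column by column. For \eqref{typel=41}, let $a,\dots,g$ be the multiplicities of $1234,123,124,234,236,23,2$. Then the rows of $\Phi^{-1}(T)$ are $1^{a+\cdots+g}$, $2^{a+b+d+e+f}\overline1^{\,c}$, $3^{e}\overline2^{\,a+c+d}\overline1^{\,b}$ and $\overline1^{\,a}$.
\end{itemize}
For \eqref{typel=41} this means:
\begin{itemize}
\item $m_{123}\le m_2$ becomes $m_{12\overline1}\le m_1$;
\item $m_{124}\le m_{23}$ is what fixes these slide paths, and it reappears as semistandardness of the $\overline1$'s in row two;
\item $m_{123\overline1}\le m_{12\overline2}$ holds automatically.
\end{itemize}
You need to carry out this computation for each family rather than assert a dictionary.
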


\begin{proof} $(1)$
By Corollary \ref{prop:promn3}, it is an easy exercise with the promotion operations in Definition \ref{def:promotion}  that applying $\Phi^{-1}=\pr^{-1}_{3,4}\circ \pr_{3,5}^{-1}\circ \pr^{-1}_{1,6}$ to ${\LRAII^{AII}}^{-1}\{(S^{H,\mu},Q): Q \mbox { $1$-$0$  slack recording tableau } \} \subseteq SST_6$ consisting of the $\mathfrak{k}$-highest weight tableaux \cite[Corollary 6]{azreco}
we get the  tableaux
 \eqref{ghtype31}, \eqref{ghtype33}, \eqref{ghtype34}. We want to check that they belong to
 $SST_6^{{\widehat{\mathfrak{g}}-dom}}(\lambda,\mu)=\domres(\lambda,\mu)$ by showing that they are characterized by the linear inequalities \eqref{ghwineq}.

  Due to the embedding ${\mathfrak{gl}}_{6}\subset {\mathfrak{gl}}_{12}$,  we show that
 they are $\widehat{\mathfrak{gl}}_{12}$-dominant. Let us consider those tableaux in $SSTY_{C_6}$. By Definition \ref{def:cancelation} (\cite[Defintion 50]{schumanntorres} ) the obtained tableaux have the cancellation property and satisfy the assumptions of \cite[Lemma 49]{schumanntorres}, here Proposition \ref{prop:gineq}. Using the linear inequalities \eqref{ghwineq}

 \begin{align}m_{1{\oo 2}3\overline 3\overline 1}\le m_{\oo 1}, \quad m_{12{\red 3}\overline 2\overline 1}\le m_{\red 12},\quad m_{1\bro 2{3}\overline 1}\le m_{\bro 12\overline 2}\nonumber
 \end{align}
 we can verify that the inequalities \eqref{ineqdom} in Proposition \ref{prop:gineq} (\cite[ Proposition 56]{schumanntorres})  hold, for $1\le i\le 5$, $1\le l\le 6$.

 Note the entries $\i$ appear after row $i$  and rows one and two have  just entries $1$ respectively   $2$. There are no entries $4\le i\le 6$ nor $\overline 6\le\i\le \overline 4$. We show the inequalities \eqref{ineqdom} for tableaux
 \eqref{ghtype31}, \eqref{ghtype33}.
  The other case is similar.

Tableau \eqref{ghtype31}:
For $i=1$
$$(1,1)-(2,2)=m_1$$
$$(1,1)-(2,2)-((\overline 1,5)-(\overline 2,4))=m_1- m_{1{\oo 2}3\overline 3\overline 1}\ge 0 , \mbox{ because $m_{1{\oo 2}3\overline 3\overline 1}\le m_{\oo 1}$}$$

$$(1,1)-(2,2)-((\overline 1,5)-(\overline 2,4))-((\overline 1,6)-(\overline 2,5))=m_1- m_{1{\oo 2}3\overline 3\overline 1}-(-m_{123\overline 3\overline 2})\ge 0$$

For $i=2$
$$(2,2)-(3,3)=m_{12}$$
$$(2,2)-(3,3)-((\overline 2,4)-(\overline 3,3))=m_{12}-m_{1{ 2}{\red 3}\overline 2\overline 1}\ge 0, \mbox{ because $ m_{1{ 2}{\red 3}\overline 2\overline 1}\le m_{\red12}$}$$

$$(2,2)-(3,3)-((\overline 2,4)-(\overline 3,3))- ((\overline 2,5)-(\overline 3,4))-((\overline 2,6)-(\overline 3,5))=$$
$$=m_{12}-m_{1{ 2}{\red 3}\overline 2\overline 1}-(-m_{123\overline 3\overline 1})-(-(\overline 3,5)))\ge 0$$

For $i=3$,
$$(3,3)-(4,4)-((\overline 3,3)-(\overline 4,2))=(3,3)\ge 0$$
$$(3,3)-(4,4)-((\overline 3,3)-(\overline 4,2))-((\overline 3,4)-(\overline 4,3))=(3,3)-(\overline 3,4)\ge 0$$

Tableau \eqref{ghtype33}:
For $i=1$
$$(1,1)-(2,2)=m_1$$
$$(1,1)-(2,2)-((\overline 1,4)-(\overline 2,3))=m_1- (m_{1{2}3\overline 1}-m_{12\overline 2})=$$
$$=m_1+ (m_{12\overline 2}-m_{1{2}3\overline 1})\ge 0 , \mbox{ because $m_{12\overline 2}\ge m_{1{2}3\overline 1}$}$$

$$(1,1)-(2,2)-((\overline 1,4)-(\overline 2,3))-((\overline 1,5)-(\overline 2,4)=$$
$$=m_1+ (m_{12\overline 2}-m_{1{2}3\overline 1})-m_{123\overline 3\overline 1}=$$
$$=m_1-m_{123\overline 3\overline 1}+ (m_{12\overline 2}-m_{1{2}3\overline 1})\ge 0, \mbox{ because $m_{1}\ge m_{123\overline 3\overline 1} $}$$

$$(1,1)-(2,2)-((\overline 1,4)-(\overline 2,3))-((\overline 1,5)-(\overline 2,4)-((\overline 1,6)-(\overline 2,5))=$$
$$=m_1-m_{123\overline 3\overline 1}+ (m_{12\overline 2}-m_{1{2}3\overline 1})-(-m_{123\overline 3\overline 2})\ge 0$$

For $i=2$
$$(2,2)-(3,3)=m_{12}+m_{12\overline 2}$$
$$(2,2)-(3,3)-((\overline 2,3)-(\overline 3,2))=m_{12}+m_{12\overline 2}-(\overline 2,3) =m_{12}
+m_{12\overline 2}-m_{12\overline 2}=m_{12}$$

$$(2,2)-(3,3)-((\overline 2,3)-(\overline 3,2))-((\overline 2,4)-(\overline 3,3))=$$
$$=m_{12}-((\overline 2,4)-(\overline 3,3))=m_{12}-(\overline 2,4)=$$
$$=m_{12}-m_{123\overline 2\overline 1}\ge 0, \mbox{ because $m_{12}\ge m_{1{2}3\overline 2 \overline  1}$}$$

$$(2,2)-(3,3)-((\overline 2,3)-(\overline 3,2))-((\overline 2,4)-(\overline 3,3))-((\overline 2,5)-(\overline 3,4))=$$
$$ =m_{12}-m_{123\overline 2\overline 1}-((\overline 2,5)-(\overline 3,4))=m_{12}-m_{123\overline 2\overline 1}-(-m_{123\overline 3\overline 1})\ge 0$$

$$(2,2)-(3,3)-((\overline 2,3)-(\overline 3,2))-((\overline 2,4)-(\overline 3,3))-((\overline 2,5)-(\overline 3,4))-(\overline 2,6)-(\overline 3,5))=$$
$$=m_{12}-m_{123\overline 2\overline 1}+m_{123\overline 3\overline 1}-(\overline 2,6)-(\overline 3,5))=$$
$$=m_{12}-m_{123\overline 2\overline 1}+m_{123\overline 3\overline 1}-(-(\overline 3,5))\ge 0$$

For $i=3$
$$(3,3)-(4,4)=(3,3)$$

$$(3,3)-(4,4)-((\overline 3,3)-(\overline 4,2))=(3,3)=3,3)-(4,4)-((\overline 3,3)-(\overline 4,2))=(3,3)\ge 0$$

$$(3,3)-(4,4)-((\overline 3,3)-(\overline 4,2))-((\overline 3,4)-(\overline 4,3))=(3,3)-(\overline 3,4)\ge 0$$

$$(3,3)-(4,4)-((\overline 3,3)-(\overline 4,2))-((\overline 3,4)-(\overline 4,3))-((\overline 3,5)-(\overline 4,4))=$$
$$=(3,3)-(\overline 3,4)\ge 0$$

$$(3,3)-(4,4)-((\overline 3,3)-(\overline 4,2))-((\overline 3,4)-(\overline 4,3))-((\overline 3,5)-(\overline 4,4))-((\overline 3,6)-(\overline 4,5))=(3,3)-(\overline 3,4)\ge 0$$

\end{proof}

\subsubsection{Case n=4}

Let $n=4$. Thanks to Lemma \ref{lem:basic}, $(1)$, $(2)(a)$, we now generalize \cite[Corollary 8]{azreco} for any $\ell(\mu)\le \ell(\lambda)\le 2n$ and $\ell(\mu)\le n$, and then
we compute

$$\Phi^{-1}({\LRAII^{AII}}^{-1}(\{S^{H,\mu}\}\times {Rec}_{8}^{1,0}(\lambda/\mu))\subset SST_8^{{\widehat{\mathfrak{g}}-dom}}(\lambda,\mu).$$
 where $\Phi=\pr_{1,8}\circ \pr_{3,7}\circ\pr_{3,6}$.

\begin{thm}\label{thm:2} Let $n=4$ and $S^{H,\mu}\in SpT_8(\mu)$.

The set of $\mathfrak{k}$-highest weight tableaux defined by  ${\LRAII^{AII}}^{-1}(\{S^{H,\mu}\}\times {Rec}_{8}^{1-0}(\lambda/\mu))\subseteq SST_8^{\mathfrak{k}-hw}(\lambda,\mu)\subseteq SST_8(\lambda)$ is displayed below for any $\ell(\mu)\le \ell(\lambda)\le 2n$ and $\ell(\mu)\le n$.

Then, for each
$$T\in{\LRAII^{AII}}^{-1}(\{S^{H,\mu}\}\times  {Rec}_{8}^{1,0}(\lambda/\mu))\Rightarrow \Phi^{-1}(T)\in SST_8^{{\widehat{\mathfrak{g}}-dom}}(\lambda,\mu)\subseteq SST_8(\lambda)$$ is a $\widehat{\mathfrak{g}}$-dominant tableau   as displayed below (recall $\overline i=2n-i+1$ and $n=4$) satisfying the corresponding linear inequalities.
\begin{enumerate}
\item  If $\ell(\lambda)=8$, the $\k$-highest weight tableaux are the ones in \cite[Corollary 8]{azreco} with the linear inequalities

\begin{align}&m_{12{\bf 3}5678}+m_{12{\bf 3}567}\le m_2, \quad 0\le m_{1234{\bf 6}78}+m_{1234{\bf 6}7}\le m_{23},\quad 0\le m_{123456\bf 7}\le m_{236} \nonumber\\
  &m_{1234568}=m_{{\bf 2}34567}+ m_{12{\bf 3}567}+ m_{1234{\bf 6}7}\label{ineqCorollary 8azreco}
\end{align}

and the corresponding $\widehat{\mathfrak{g}}$-dominant tableaux with linear inequalities are


\vskip0.4cm
\begin{figure}[htbp] \centering \includegraphics[width=0.9\textwidth]{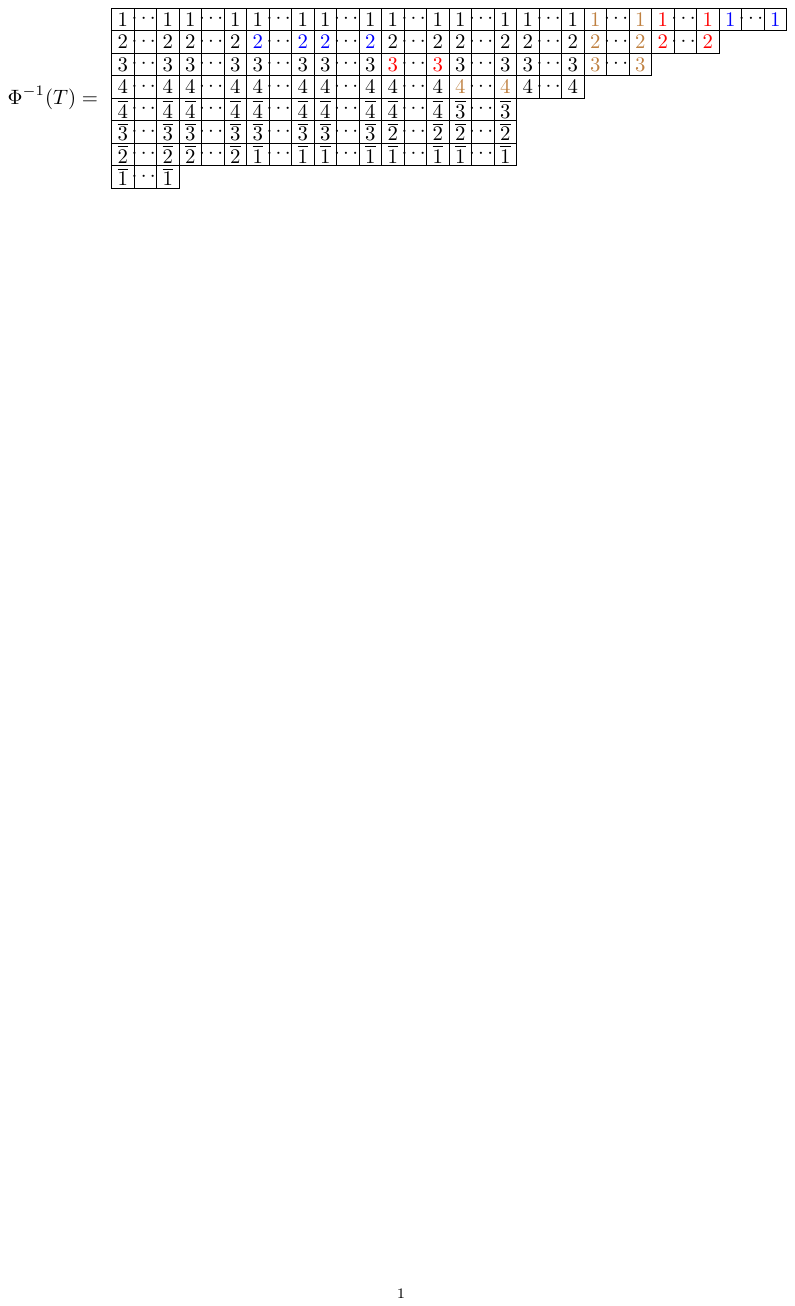}
\label{fig:figura1}
 \end{figure}
\begin{align}m_{1{\oo 2}3 4\overline 4\overline 3\overline 1}\le m_{\oo 1}, \quad m_{12{\red 3}4\overline 4 \overline 2\overline 1}\le m_{\red 12},\quad m_{1 2{3}{\bro 4}\overline 3\overline 2\overline 1}\le m_{ \bro 123},\quad m_{12{\ora 34}\overline 2\overline 1}\le m_{123\overline 3}
\end{align}
or

\begin{figure}[htbp] \centering \includegraphics[width=0.8\textwidth]{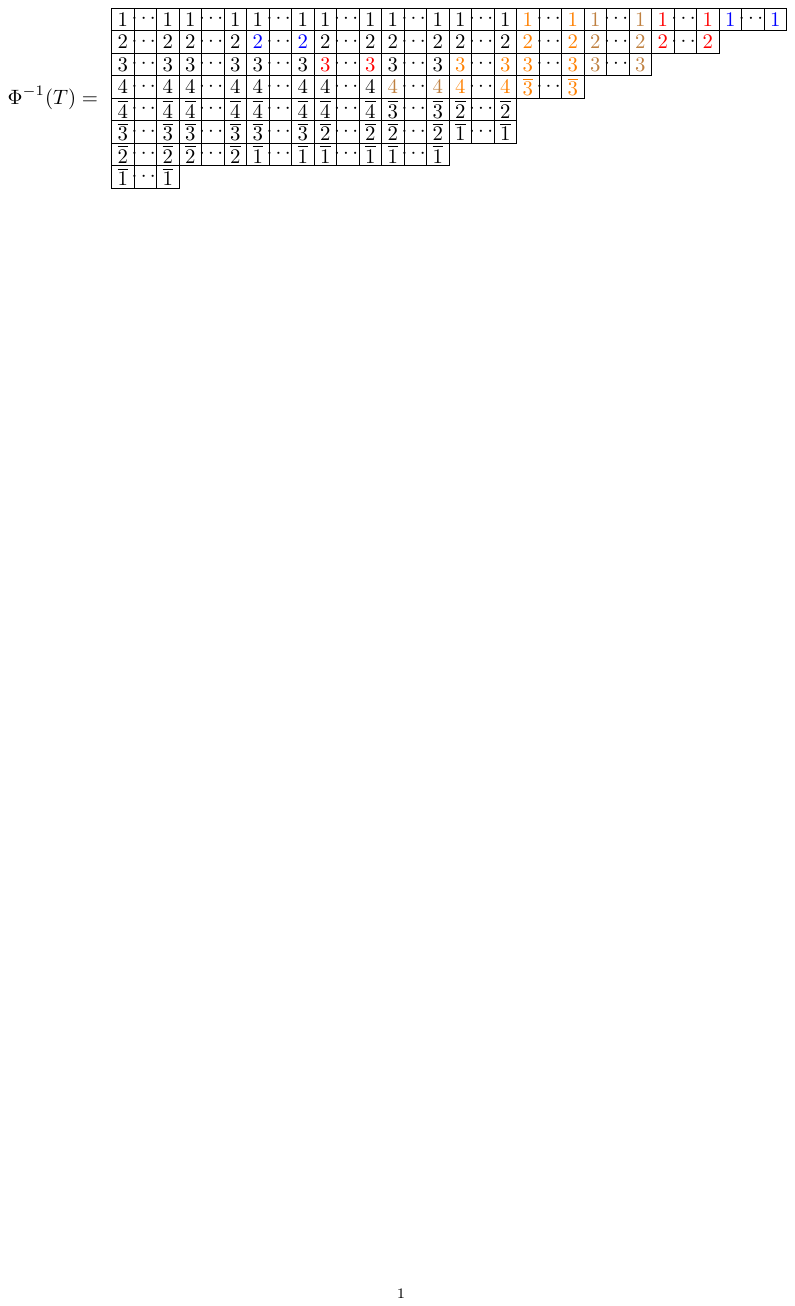}
\label{fig:figura2}
 \end{figure}
\begin{align}m_{1{\oo 2}3 4\overline 4\overline 3\overline 1}\le m_{\oo 1}, \quad m_{12{\red 3}4\overline 4 \overline 2\overline 1}\le m_{\red 12},\quad m_{1 2{3}{\bro 4}\overline 3\overline 2\overline 1}\le m_{ \bro 123},\quad m_{12{\ora 34}\overline 2\overline 1}\le m_{123\overline 3}
\end{align}
or
\begin{figure}[htbp] \centering \includegraphics[width=0.8\textwidth]{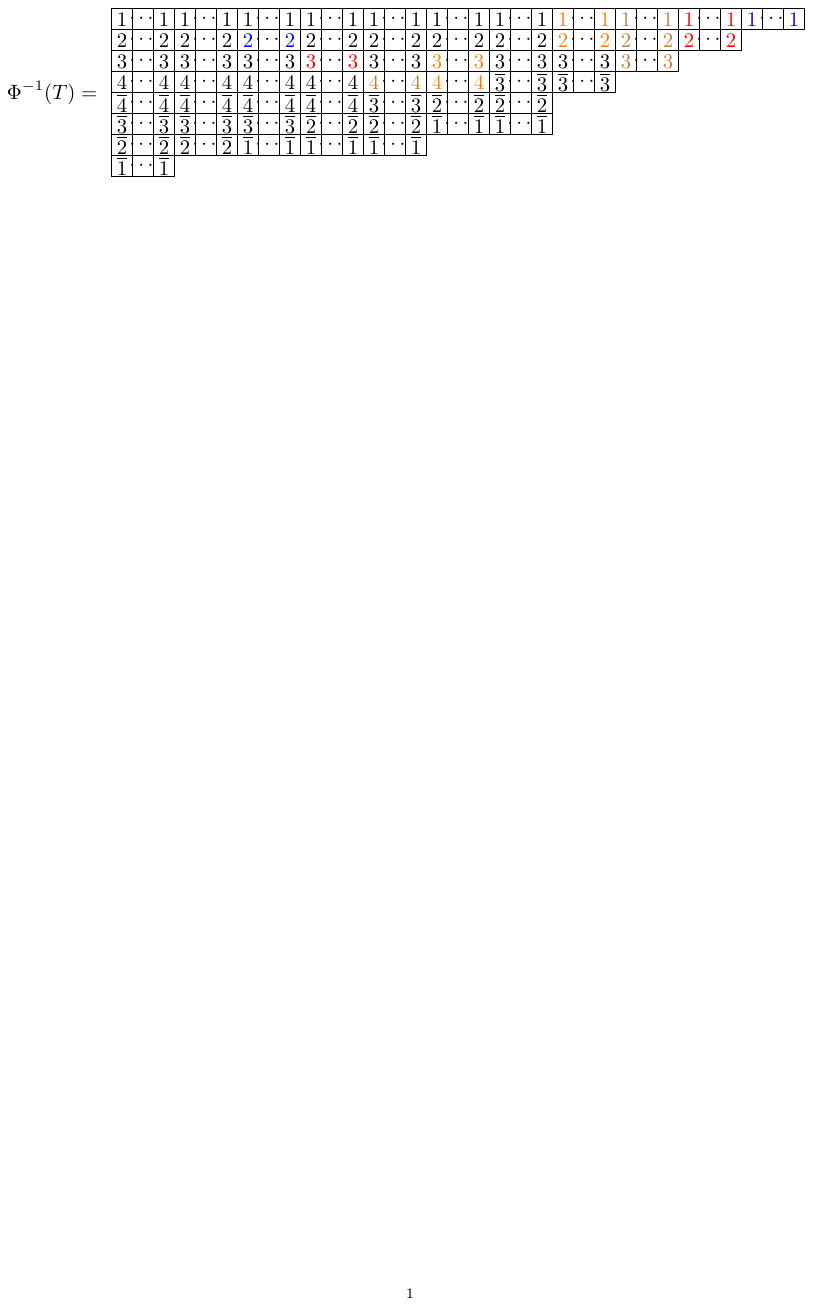}
\label{fig:figura3}
 \end{figure}
\begin{align}m_{1{\oo 2}3 4\overline 4\overline 3\overline 1}\le m_{\oo 1}, \quad m_{12{\red 3}4\overline 4 \overline 2\overline 1}\le m_{\red 12},\quad m_{1 2{3}{\bro 4}\overline 3\overline 2\overline 1}\le m_{ \bro 123},\quad m_{12{\ora 34}\overline 2\overline 1}\le m_{123\overline 3}
\end{align}

\item For $\ell(\lambda)=7$, Lemma \ref{lem:basic} guarantees that the $\k$-highest weight tableaux and respectively the $\widehat\g$-dominant tableaux are the same as above with $M=0$ in $(12345678)^M$ respectively in  $(1234\overline{4321})^M$.

\item \label{n=8l=6notdone}For $\ell(\lambda)=6$,

     $$(A)$$

 \begin{align}\label{n=4type4160x}
&T=\YT{0.15in}{}{
 {1,\cdots,{1}, 1,\cdots,{1}, 1,\cdots,{1}, 2,\cdots, 2,2,\cdots,{2},2,\cdots,{2},{2},\cdots,{2}},
 {2,\cdots,{2}, {2},\cdots,2, 2,\cdots,{2},{3},\cdots,{3},3,\cdots,3,3,\cdots,3},
 {{3},\cdots,{3},{3},\cdots,{3}, 3,\cdots,{3}, {6},\cdots,{6},{6},\cdots,{6}},
 {{4},\cdots,{4},{4},\cdots,{4}, 4,\cdots,{4},  7,\cdots,7},
 {{5},\cdots,{5},7,\cdots,7 },
 {{6},\cdots,{6}},
}
\\
&\redu^{-1}(7)=12347 \quad m_{12347}\le m_{236}
\end{align}

or

$$(B)$$

\begin{align}\label{type416}
&T=\YT{0.15in}{}{
 {\bro{1},\cdots,\bro{1},\ora{1},\cdots,\ora{1}, \blue{1},\cdots,1,\red{\bf \circled{\bf 2}},\cdots,\red{\bf \circled{\bf 2}},2,\cdots, 2, 2,\cdots, 2,2,\cdots,{2},2,\cdots,{2}},
 {\bro{2},\cdots,\bro{2},   \ora{2}, \cdots,\ora{2},         \blue{2},\cdots,2,\red{3},\cdots,3,3,\cdots,3,{3},\cdots,{3},3,\cdots,3},
 {\bro{3},\cdots,\bro{3},   \ora{3},\cdots,\ora{3},       \blue{ \circled{\bf 3}},\cdots,\blue{ \circled{\bf 3}},\red{4},\cdots,4,6,\cdots,6,{6},\cdots,{6}},
 {\bro{4},\cdots,\bro{4},\ora{4},\cdots,\ora{4},        \blue{5},\cdots,5,\red{5},\cdots,5,7,\cdots,7},
 {\bro{5 },\cdots,\bro{5},\ora{\bf\circled {\bf 6}},\cdots,\ora{\bf\circled {\bf 6}},  \blue{6},\cdots,6,\red{6},\cdots,6},
 {\bro{6},\cdots,\bro{6}},
 }
\end{align}

or

 \begin{align}\label{l=6n=4type31X}
&T=\YT{0.15in}{}{
 {1,\cdots,{1},1,\cdots,{1}, \ora{1},\cdots,\ora{1}, 2,\cdots,\blue{{\bf 2}},2,\cdots, 2, 2,\cdots, 2,2,\cdots,{2},2,\cdots,{2},{2},\cdots,{2}},
 {{2},\cdots,2,{2},\cdots,2, { 2},\cdots,2,\blue{3},\cdots,\blue{3},\blue{3},\cdots,\blue{3},{3},\cdots,{3},3,\cdots,3,3,\cdots,3},
 {{3},\cdots,{3},{3},\cdots,{3},  3,\cdots, 3,{4},\cdots,{4},\blue{4},\cdots,\blue{4},{6},\cdots,{6},{6},\cdots,{6}},
 {{4},\cdots,{4},{4},\cdots,{4},  {5},\cdots,{5},\blue{5},\cdots,\blue{5},\blue{5},\cdots,\blue{5}, 7,\cdots,7},
 {{5},\cdots,{5}, 6,\cdots, 6,  {6},\cdots,{6},\blue{6},\cdots,\blue{6}, 7,\cdots,7},
 {6,\cdots,6},
 }
\end{align}

or
\begin{align}\label{l=6n=4type31XX}
&T=\YT{0.15in}{}{
 {1,\cdots,\bro{1},1,\cdots,\bro{1}, \ora{1},\cdots,\ora{1},  2,\cdots, 2, 2,\cdots, 2,2,\cdots,{2},2,\cdots,{2},{2},\cdots,{2}},
 {\bro{2},\cdots,2,\bro{2},\cdots,2, \ora{ 2},\cdots,2,\blue{3},\cdots,\blue{3},{3},\cdots,{3},3,\cdots,3,3,\cdots,3},
 {\bro{3},\cdots,\bro{3},\bro{3},\cdots,\bro{3},   3,\cdots, 3,\blue{4},\cdots,\blue{4},{6},\cdots,{6},{6},\cdots,{6}},
 {\bro{4},\cdots,\bro{4},\bro{4},\cdots,\bro{4},  \ora{5},\cdots,\ora{5},\blue{5},\cdots,\blue{5}, 7,\cdots,7},
 {{5},\cdots,{5},6,\cdots, 6,  {7},\cdots,{7},{7},\cdots,{7}},
{6,\cdots, 6},
}
\end{align}

$$(C)$$

 \begin{figure}[htbp] \centering \includegraphics[width=0.8\textwidth]{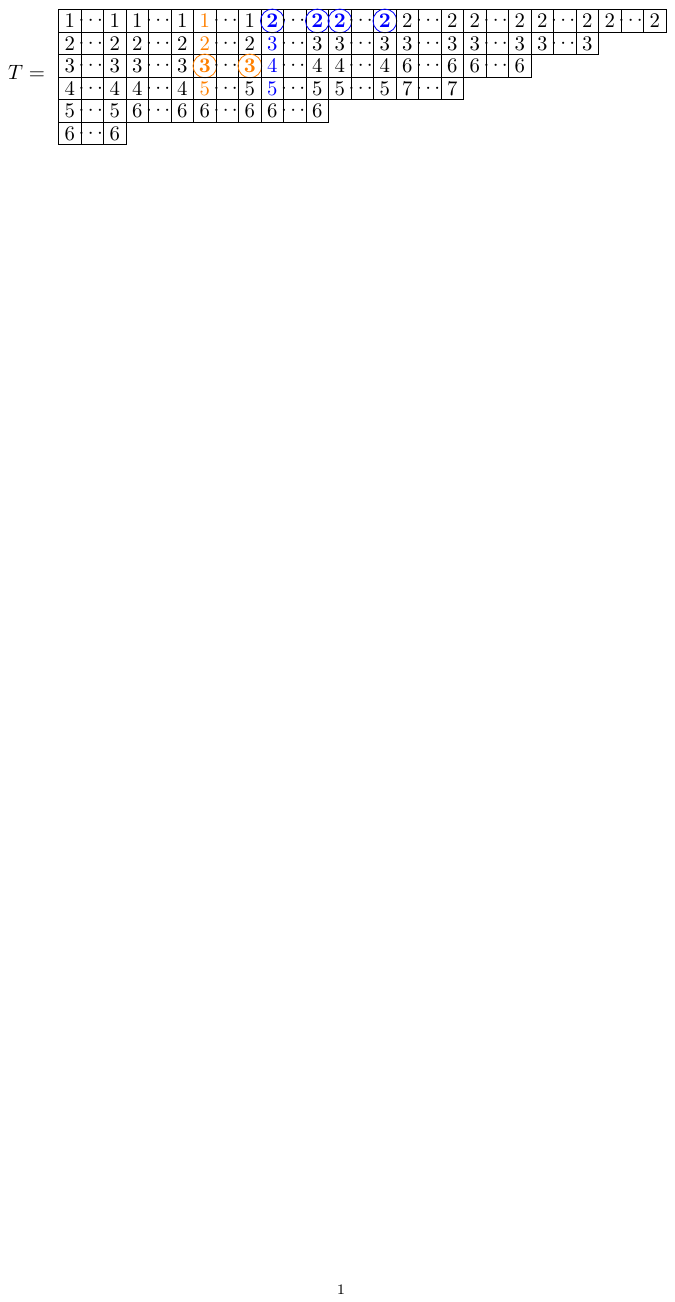}
\label{fig:figuraEXl=6n=4type32v4}
 \end{figure}
 \begin{align}
&m_{12356}+m_{1235}\le m_2 \mbox{ and }0\le m_{12346}-m_{1235}-m_{2345}\le m_{23}
\end{align}


Not all of them are displayed. The remaining ones are obtained from this one by reverse Schensted insertion and inverse reverse reduction.

All the tableaux above satisfy
\begin{align}&m_{12347}+m_{12357}+m_{23457}\le m_{236},\quad\\
& m_{1235}+m_{2345}+m_{1234}+ m_{12357}+m_{23457}+m_{12347}\le m_{12346}\\
&m_{12{\bf 3}56}+m_{1235}\le m_{\bf 2} \mbox{ and }0\le m_{1234\bf 6}-m_{1235}-m_{2345}\le m_{2\bf 3}.\label{ineqn=4}
\end{align}

Under the transformation $\Phi^{-1}$  on those tableaux $(A)$ one has respectively with corresponding linear inequalities
$$\Phi^{-1}(A)$$
%
\begin{figure}[htbp] \centering \includegraphics[width=0.7\textwidth]{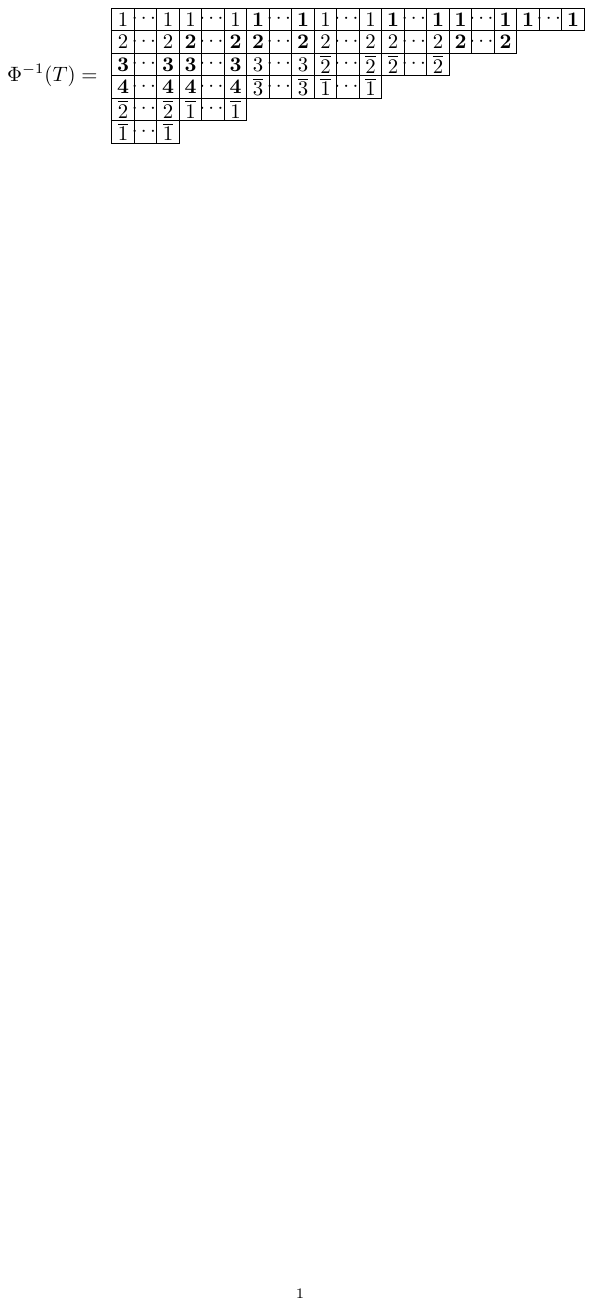}
\label{fig:figuraEXgwn=4type4160xv4}
 \end{figure}
\begin{align}
 m_{1234\overline{21}}\le m_{123\overline 3}\quad  m_{1234\overline 1}\le m_{12\overline 2}
  \end{align}

 $$\Phi^{-1}(B)$$
  \begin{figure}[htbp] \centering \includegraphics[width=0.7\textwidth]{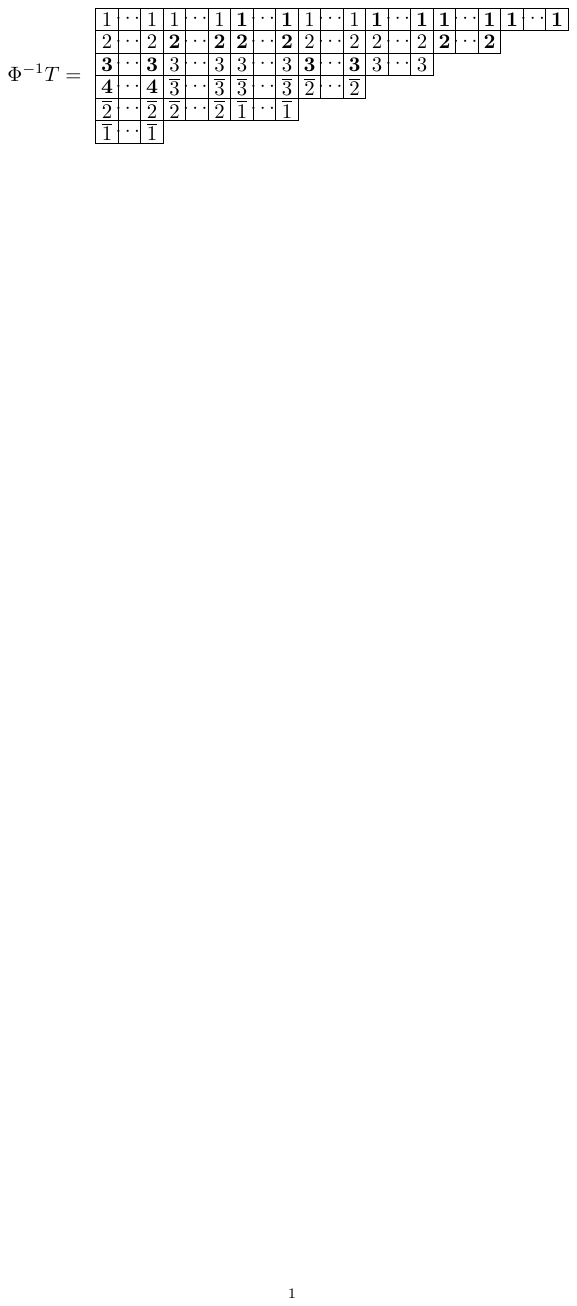}
\label{fig:figuraEXgtype416v4}
 \end{figure}
 \begin{align}
 m_{1234\overline{21}}\le m_{123\overline{31}}\le m_{123\overline{2}}\le m_{12} 
\label{Bineqgtype416*}
 \end{align}

 \begin{figure}[htbp] \centering \includegraphics[width=0.7\textwidth]{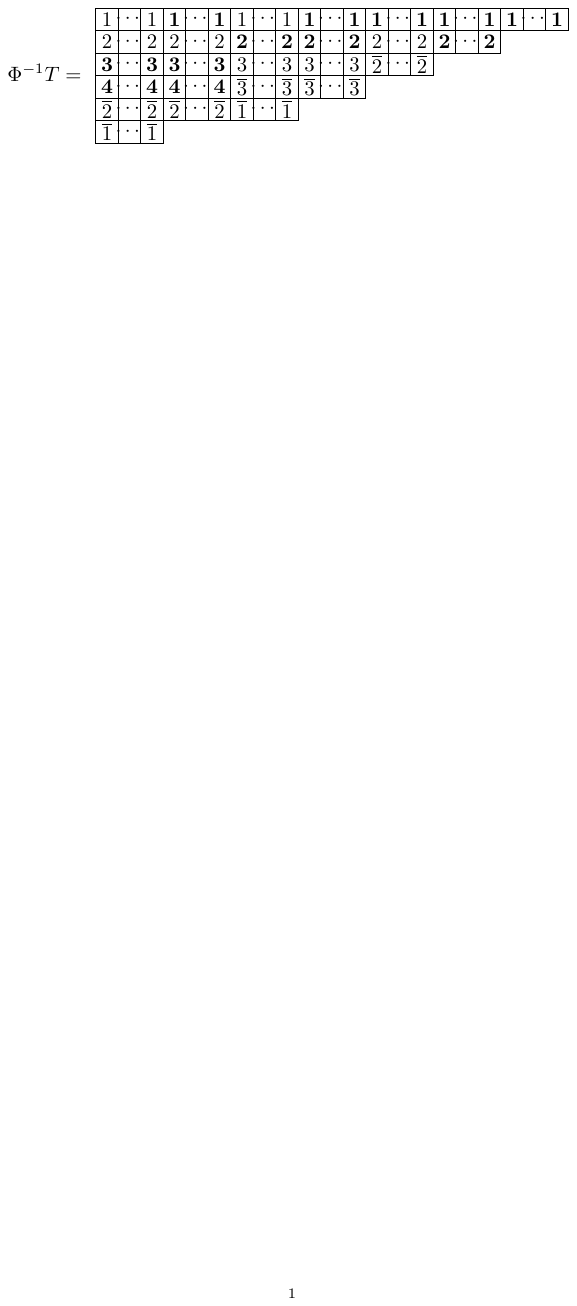}
\label{fig:figuraEXgl=6n=4type31Xv4}
 \end{figure}
 \begin{align}
 m_{1234\overline{21}}\le m_{123\overline 3},\quad
  m_{1234\overline 2} \le m_{123\overline{31}}\le m_{12\overline 2}
 \end{align}
\vskip3cm
 $$\Phi^{-1}(C)$$


 \begin{figure}[htbp] \centering \includegraphics[width=0.7\textwidth]{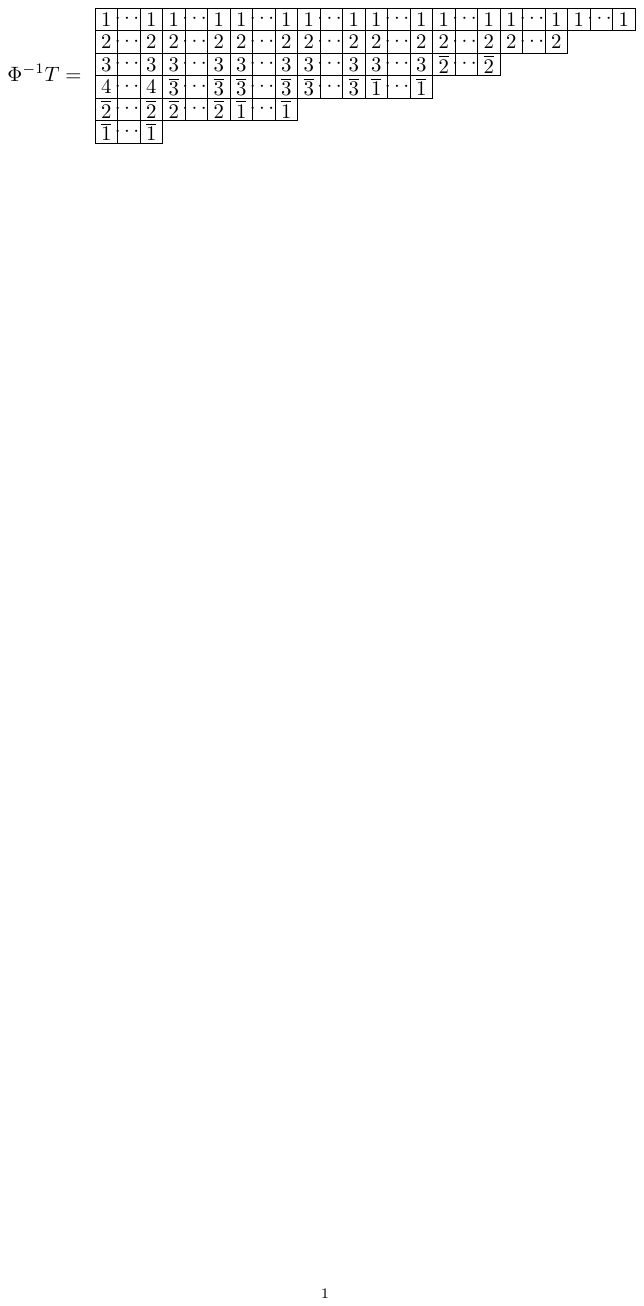}
\label{fig:figuraineqEXgl=6n=4type32v4}
 \end{figure}
 \begin{align}
 m_{1234\overline{2}\overline{1}}\le m_{123\overline 3}\quad m_{123\overline{31}}\le m_{1}, \quad  \quad m_{123\overline{1}}\le m_{12\overline 2}\label{Bineqgl=5n=4type31X***}
 \end{align}

    \item \label{n=8l=5notdone}If $\ell(\lambda)=5$, we apply Lema \ref{lem:basic} $(1)$ and $(2)$ $(a)$. The $\k$-highest weight tableaux are the same as above
with the columns $(123456)^M$, $M>0$, deleted on the left, together
with the same inequalities.
Under the transformation $\Phi^{-1}$  on those tableaux obtained from $(A)$ one has

$$\Phi^{-1}(A)$$
  \begin{align}\label{gwn=4type30}
&\Phi^{-1}(T)=\YT{0.15in}{}{
 {1,\cdots,{1}, \bf 1,\cdots,{1}, 1,\cdots, 1,1,\cdots,{1},1,\cdots,{1},{1},\cdots,{1}},
 {{2},\cdots,2, \bf 2,\cdots,{2},{2},\cdots,{2},2,\cdots,2,2,\cdots,2},
 {{3},\cdots,{3},\bf  3,\cdots,{3}, {\overline 2},\cdots,{\overline 2},{\overline 2},\cdots,{\overline 2}},
 {{4},\cdots,{4}, \bf 4,\cdots,{4},  \overline 1,\cdots,\overline 1},
 {\overline 1,\cdots,\overline 1 },
}
 \end{align}
where \begin{align} m_{1234\overline 1}\le m_{12\overline 2}
  \end{align}

$$\Phi^{-1}(B)$$
\begin{align}\label{gl=5type416}
&\Phi^{-1}T=\YT{0.15in}{}{
 {{\bf 1},\cdots,{\bf 1},  1,\cdots, 1, \bf 1,\cdots, \bf1,\bf 1,\cdots, \bf 1,\bf 1,\cdots,\bf 1,\bf 1,\cdots,{\bf 1} },
 {{2},\cdots,{2},           {\bf 2},\cdots,\bf 2,{ 2},\cdots, 2,\bf 2,\cdots,\bf 2,{\bf 2},\cdots,{\bf2} },
 {{\bf 3},\cdots,{\bf 3},          3,\cdots,3,\bf 3,\cdots,\bf 3,{\bf 3},\cdots,{\bf 3}},
 {{\bf 4},\cdots,\bf 4,        \overline 3,\cdots,\overline 3,\overline 2,\cdots,\overline 2},
 {\overline 2,\cdots,\overline 2,  \overline 1,\cdots,\overline 1},
 }
 \end{align}
 \begin{align}  m_{1234\overline{2}}\le m_{123\overline{31}}\le   m_{123\overline{2}}\le m_{12}\label{Bineqgl=5type416*}
 \end{align}
 \begin{align}\label{gl=5n=4type31X}
&\Phi^{-1}T=\YT{0.15in}{}{
 {{\bf 1},\cdots,{\bf 1}, {1},\cdots,1, 1,\cdots, 1,\bf 1,\cdots, \bf 1, \bf 1,\cdots,\bf  1,\bf 1,\cdots,{\bf 1},\bf 1,\cdots,{\bf 1}},
 {   {2}, \cdots,{2},         {\bf 2},\cdots,\bf 2,{\bf 2},\cdots,\bf 2,\bf 2,\cdots,\bf 2,{2},\cdots,{2},\bf 2,\cdots,\bf 2},
 {   {\bf 3},\cdots,{\bf 3},        {\bf 3},\cdots,{\bf 3},3,\cdots,3, 3,\cdots, 3,{\overline 2},\cdots,{\overline 2}},
 {{\bf 4},\cdots,\bf 4, \bf 4,\cdots,\bf 4,       \overline 3,\cdots,\overline 3,\overline 3,\cdots,\overline 3},
 {\overline 2,\cdots,\overline 2,  \overline 1,\cdots,\overline 1,\overline 1,\cdots,\overline 1},
 }
 \end{align}

\begin{align}
 m_{1234\overline{2}}\le m_{123\overline 3}\quad m_{123\overline{31}}\le m_{1}, \quad  \quad m_{1234\overline{1}}\le m_{12\overline 2}
 \end{align}

$$\Phi^{-1}C$$

\begin{align}\label{gCl=5n=4type32}
&\Phi^{-1}T=\YT{0.15in}{}{
 {{\bf 1},\cdots,\bf 1,\bf 1,\cdots,\bf 1,1,\cdots,1,\bf 1, \cdots,\bf 1, 1,\cdots, 1,\bf 1,\cdots, \bf 1, 1,\cdots,1,1,\cdots,{1}},
 {{2},\cdots,2,  2,\cdots,2,{\bf 2},\cdots,\bf 2,{\bf 2},\cdots,\bf 2,\bf 2,\cdots,{\bf 2},2,\cdots,2,2,\cdots,2},
 {{\bf 3},\cdots,\bf 3,  3,\cdots, 3,{3},\cdots,3, 3,\cdots,3, \bf 3,\cdots,{\bf 3},\overline 2,\cdots,{\overline 2}},
 {{\bf 4},\cdots,\bf 4, {\overline 3},\cdots,\overline 3,{\overline 3},\cdots,\overline 3, \overline 3,\cdots,\overline 3,\overline 1,\cdots,{\overline 1}},
 {{\overline 2},\cdots,\overline 2,  {\overline 2},\cdots,\overline 2,{\overline 1},\cdots,\overline 1},
 }
 \end{align}

\begin{align}
 m_{1234\overline{2}}\le m_{123\overline 3},\quad m_{123\overline{31}}\le m_{1}, \quad  \quad m_{123\overline{1}}\le m_{12\overline 2}
 \end{align}

        \item If $\ell(\lambda)=4$,  then either

       with  $\ell(\mu)=4$
        \begin{align}\label{Zn=4l=4type426}
&T=\YT{0.15in}{}{
 { 1,\cdots,1,2,\cdots, 2, 2,\cdots, 2,2,\cdots,{2},2,\cdots,{2}},
 {         2,\cdots,2,3,\cdots,3,{3},\cdots,{3},3,\cdots,3},
 {        3,\cdots,3,6,\cdots,6,{6},\cdots,{6}},
 {        4,\cdots,4,7,\cdots,7},
}
\end{align}

or with $\ell(\mu)\le 3$

\begin{align}\label{n=4typel=410}
&T=\YT{0.15in}{}{
 {1,\cdots,1,{1},\cdots,1,    \blue{\circled{\bf 2}},\cdots, \blue{\circled{\bf 2}},{2},\cdots,2,    2,\cdots, 2,   2,\cdots,{ 2}},
 {2,\cdots,2,{2},\cdots,2,3,\cdots,3,  {3},\cdots,3,  { 3},\cdots, 3},
 {3,\cdots,3, \red{\circled{\bf{3}}},\cdots, \red{\circled{\bf 3}},  4,\cdots,4,\ora 6,\cdots,\ora 6},
 {4,\cdots,4},
}
\end{align}
 or
\begin{align}\label{n=4typel=41}
&T=\YT{0.15in}{}{
 {1,\cdots,1,{1},\cdots,1,1,\cdots,1,     2,\cdots,  2,{2},\cdots,2,    2,\cdots, 2,   2,\cdots,{ 2}},
 {2,\cdots,2,{2},\cdots,2,{2},\cdots,2,3,\cdots,3,  {3},\cdots,3,  { 3},\cdots, 3},
 {3,\cdots,3, \red{\circled{\bf{3}}},\cdots, \red{\circled{\bf 3}},4,\cdots,4,  4,\cdots,4,\ora 6,\cdots,\ora 6},
 {4,\cdots,4},
}
\end{align}

or

\begin{align}\label{n=4typel=42}
&T=\YT{0.15in}{}{
 {1,\cdots,1,{1},\cdots,1,{1},\cdots,1,1,\cdots,1,  2,\cdots, 2, 2,\cdots, 2, 2,\cdots,{2}},
 {2,\cdots,2,{2},\cdots,2,{2},\cdots,2,2,\cdots,2,  {3},\cdots,3,   {3},\cdots,3},
 {3,\cdots,3,\red{\circled{\bf{3}}},\cdots,\red{\circled{\bf{3}}},4,\cdots,4,  \ora 6,\cdots,\ora 6,\ora 6,\cdots,\ora 6},
 {4,\cdots,4},
}
\end{align}

or
\begin{align}\label{n=4typel=43}
&T=\YT{0.15in}{}{
 {1,\cdots,1,{1},\cdots,1,{1},\cdots,1,1,\cdots,1,{1},\cdots,1,    2,\cdots, 2, 2,\cdots, 2},
 {2,\cdots,2,{2},\cdots,2,{2},\cdots,2,2,\cdots,2,  {2},\cdots,2,   {3},\cdots,3},
 {3,\cdots, 3,\red{3},\cdots,\red 3,4,\cdots,4,  \ora 6,\cdots,\ora 6},
 {4,\cdots,4},
}
\end{align}
all these tableaux satisfy the linear inequalities
\begin{align}\label{catineq=64}m_{123}\le m_2, \quad m_{124}+m_ {126}\le m_{23}
\end{align}

Under the transformation $\Phi^{-1}$  one has  respectively

\begin{align}\label{n=4l=4type426}
&\Phi^{-1}(T)=\YT{0.15in}{}{
 { 1,\cdots,1,1,\cdots, 1, 1,\cdots, 1,1,\cdots,{1},1,\cdots,{1}},
 {         2,\cdots,2,2,\cdots,2,{2},\cdots,{2},2,\cdots,2},
 {        3,\cdots,3,3,\cdots,3,\overline 2,\cdots,\overline 2},
 {        4,\cdots,4,\overline 1,\cdots,\overline 1},
}
\end{align}
or

\begin{align}\label{n=4gl=41}\Phi^{-1}(T)=
&\YT{0.15in}{}{
 {1,\cdots,1,1,\cdots,1,  1,\cdots, 1,   1,\cdots, 1,{1},\cdots,1,       1,\cdots,{ 1}},
 {2,\cdots,2,2,\cdots,2,{2},\cdots,2,2,\cdots,2,   { \overline 1},\cdots, \overline 1},
 {3,\cdots,3,3,\cdots,3, \overline 2,\cdots,\overline 2,  \overline 1,\cdots,\overline 1 },
 {\overline 1,\cdots,\overline 1},
}
\end{align}

or

 \begin{align}\Phi^{-1}(T)=
&\YT{0.15in}{}{
 {1,\cdots,1, 1,\cdots,1,    1,\cdots, 1,{1},\cdots,1,      1,\cdots,{ 1}},
 {2,\cdots,2,{2},\cdots,2,2,\cdots,2,  { \overline 1},\cdots, \overline 1},
 {3,\cdots,3, \overline 2,\cdots,\overline 2,  \overline 1,\cdots,\overline 1 },
 {\overline 1,\cdots,\overline 1},
}
\end{align}

where \begin{align}m_{12\overline 1}\le m_1,\quad m_{123\overline 1}\le m_{12\overline 2}
\end{align}

            \item If $\ell(\lambda)=3$, the $\k$-highest weight tableaux are the ones in the previous case with $\ell(\mu)\le 3$ and $(1234)^M$ with $M=0$.
with  the same linear inequalities
\begin{align}\label{n=4ineq=63}m_{123}\le m_2, \quad m_{124}+m_ {126}\le m_{23}
\end{align}

Under the transformation $\Phi^{-1}$ they are respectively

  \begin{align}\label{n=4typel=31}
&\Phi^{-1}(T)=\YT{0.15in}{}{
 {1,\cdots,1,1,\cdots,1,     1,\cdots, 1,{1},\cdots,1,    1,\cdots, 1,   1,\cdots,{ 1}},
 {2,\cdots,2,{2},\cdots,2,2,\cdots,2,  {2},\cdots,2,  { \overline 1},\cdots, \overline 1},
 {3,\cdots,3, \overline 2,\cdots,\overline 2,  \overline 1,\cdots,\overline 1},
}
\end{align}

or

\begin{align}\label{xn=4typel=32}
&\Phi^{-1}(T)
=\YT{0.15in}{}{
 {1,\cdots,1,1,\cdots,1,     1,\cdots, 1,    1,\cdots, 1,   1,\cdots,{ 1}},
 {2,\cdots,2,{2},\cdots,2,2,\cdots,2,    { \overline 1},\cdots, \overline 1},
 {3,\cdots,3, \overline 2,\cdots,\overline 2,  \overline 1,\cdots,\overline 1},
}
\end{align}

with the linear inequalities
\begin{align}\label{xineq=63}m_{12\overline 1}\le m_1
\end{align}

                 \item If $\ell(\lambda)=2$, then either $\ell(\mu)=1$ or $\ell(\mu)=2$,

                \begin{align}\label{n=4typel=2}&T=\YT{0.15in}{}{
 {{1},\cdots,1,    2,\cdots, 2, 2,\cdots, 2},
 {  {2},\cdots,2,   {3},\cdots,3},
}
\end{align}
  with no constraints on the multiplicity of the columns,

  and \begin{align}\label{n=4gl=2}\Phi^{-1}(T)=
&\YT{0.15in}{}{
 {1,\cdots,1,1,\cdots,1,     1,\cdots, 1 },
 {2,\cdots,2,    { \overline 1},\cdots, \overline 1},
}
\end{align}
with no constraints on the multiplicity of the columns.

      \item If $\ell(\lambda)=1$, then $\ell(\mu)=\ell(\lambda)=1$, $Q=\emptyset$ and $S^{H,\mu}=2^M$, $M\ge 0$, ${\LRAII^{AII}}^{-1}(S^{H,\mu},\emptyset)=S^{H,\mu},$ and
          $$\Phi^{-1}(S^{H,\mu})=\Phi^{-1}(2^M)=(1^M), \quad M\ge 0.$$

\end{enumerate}
\end{thm}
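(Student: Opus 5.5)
The proof follows the pattern of Theorem \ref{thm:n=3slack1} and Theorem \ref{thm:1}, organised by the shape length $\ell(\lambda)$ and reduced as far as possible with Lemma \ref{lem:basic}.

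\emph{Step 1: the $\k$-highest weight side.} For $\ell(\lambda)=8$ we take the tableaux and the inequalities \eqref{ineqCorollary 8azreco} from \cite[Corollary 8]{azreco}. For the odd lengths $7,5,3,1$, Lemma \ref{lem:basic} $(1)$ and $(2)(a)$ show that the recording tableaux in $Rec^{1,0}_8$ of length $l$ are those of length $l+1$ with the tail removed. Hence the $\k$-highest weight tableaux are those of length $l+1$ with the columns $(12\cdots l+1)^M$ deleted. Since $\wt_\k$ of such a column is $0$, the inequalities are unchanged.

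For the even lengths $6,4,2$ we describe the tableaux directly. We apply reverse Schensted insertion of the vertical strips of a $1$-$0$-slack $Q$ into $S^{H,\mu}$. Each column produced is $\redu_1^{-1}(a)$ for $a\in\{2,3,6,7\}$, by \eqref{expexp}, or $\redu_0^{-1}()=(12\cdots l)$. The bumped entries of $S^{H,\mu}$ determine which of the shapes $(A)$, $(B)$, $(C)$, \eqref{Zn=4l=4type426}--\eqref{n=4typel=43} or \eqref{n=4typel=2} occurs. We then check $P^{AII}(T)=S^{H,\mu}$ by forward insertion and reduction, exactly as in the proof of Theorem \ref{thm:n=3slack1}. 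We compute $\wt_\k(T)$ as a linear form in the multiplicities $m_c$ and confirm that the stated inequalities, for instance $m_{12347}+m_{12357}+m_{23457}\le m_{236}$ and \eqref{ineqn=4}, are exactly what makes this vector a partition. They also exclude reductions with $t\ge2$, which by Remark \ref{haha} would take $Q$ out of $Rec^{1,0}$. For $\ell(\lambda)\le 4$ the alphabet effectively used is $\{1,\dots,7\}$ in at most four rows, so the analysis is the $n=3$ computation of Theorem \ref{thm:n=3slack1}, parts (3)--(6), verbatim. The only new shape is \eqref{Zn=4l=4type426}, which contains the column $(2367)=S^{H,\varpi_4}$.

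\emph{Step 2: apply $\Phi^{-1}=\pr_{3,6}^{-1}\circ\pr_{3,7}^{-1}\circ\pr_{1,8}^{-1}$.} We apply it block by block. By Proposition \ref{special}, $(2367)\mapsto(1234)$, columns $S^{H,\mu}$ go to Yamanouchi columns, and $(12\cdots8)^M$ is fixed. This gives case $(2)$ through Lemma \ref{lem:basic}$(2)(b)$, where the first-column entries always slide down.

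For the other columns we follow the jeu de taquin slides as in Remark \ref{ha}. Those slides only mix adjacent blocks of columns, so each block of equal columns maps to a block of equal columns. This yields the displayed $\widehat{\g}$-dominant tableaux and an affine-linear change of variables between the $m$'s on the two sides, and we transport the inequalities through it.

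\emph{Step 3: dominance.} We view the images in $SST_{C_8}$, or in $SST_{C_{16}}$ via $\mathfrak{gl}_8\subset\mathfrak{gl}_{16}$ as in the proof of Theorem \ref{thm:1}. There Lemma \ref{encode} applies, and the cancellation property holds because each $\bar i$ appears below row $i$. We then verify the inequalities \eqref{ineqdom} of Proposition \ref{prop:gineq} for $1\le i\le 3$ and $i\le l\le 8$. Each telescoping partial sum reduces to one of the displayed inequalities, such as $m_{1234\overline{21}}\le m_{123\overline 3}$ or $m_{123\overline 1}\le m_{12\overline 2}$, or to a sum of nonnegative multiplicities. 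Corollary \ref{prop:promn3} then gives $\wt_{\widehat\g}(\Phi^{-1}T)=\mu$.

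\emph{Main obstacle.} I expect the hardest part to be $\ell(\lambda)=6$ (and hence $5$). There the reverse insertion branches into many configurations: $(C)$ is only partially displayed, and the tail column $(123456)$ is not $\Phi^{-1}$-stable, as Remark \ref{ha} shows. I would organise that case by which entries of $S^{H,\mu}$ get bumped, the circled entries $2,3,6$. In each branch I would track the promotion of the columns $(12\cdots6)$ that interact with the neighbouring block, as in Remark \ref{ha}, reducing the remaining configurations of $(C)$ to the displayed one by the same insertion argument.
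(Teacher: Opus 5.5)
Your proposal takes the same route as the paper's proof. You verify $P^{AII}(T)=S^{H,\mu}$ by forward insertion, with every $\redu$-step landing in $SpT_8(\varpi_t)$ for $t\in\{0,1\}$, read off $\wt_\k(T)$ as a partition under the stated inequalities, use Lemma \ref{lem:basic} for the odd lengths, and certify $\widehat{\g}$-dominance of $\Phi^{-1}(T)$ through the inequalities of Proposition \ref{prop:gineq} after embedding into $\widehat{\mathfrak{gl}}_{16}$. The paper runs that last check over $1\le i\le 7$ while you stop at $i\le 3$, which is harmless: once the row structure and the cancellation property are in place, the cases $i\ge 5$ are $0\ge 0$, and the case $i=4$ holds because each $\overline 4$ lies below a $4$ in its own column.
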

\begin{proof} To check the results, we may see that $P^{AII}(T)=S^{H,\mu}$, taking into account the inequalities that $T$ do satisfy, and that in this computation the reduce map  is always of the type \begin{align}
\redu_l:SST_{2n}(\varpi_l)&\rightarrow SpT_{2n}(\varpi_t)
\end{align} with $t\in \{0,1\}$ which guarantees that the recording tableau belongs to $Rec^{1,0}$.

(1)
Taking into account the inequalities in \eqref{ineqCorollary 8azreco}, for each of the tableau in \cite[Corollary 8]{azreco} one has respectively
$P^{AII}(T)=S^{H,\mu}$ and
\begin{align*}&\mu=\wt_\k(T)=(m_2+m_{23}+m_{236}+m_{2367}+m_{2345678}, m_{23}+m_{236}+m_{2367}+m_{1235678}, \\
&m_{236}+m_{2367}+m_{1234678},
m_{236}+m_{234567})\in Par_{\le 4}
\end{align*}
where $m_{1235678}\le m_2$ and $m_{1234678}\le m_{23}$.

\begin{align*}&\mu=\wt_\k(T)=(m_2+m_{23}+m_{236}+m_{2367}+m_{234567},m_{23}+m_{236}+m_{2367}+m_{1235678},\\
&m_{236}+m_{2367}+m_{1234678}, m_{2367}+m_{234567}-m_{1234568}+m_{1234567})\in Par_{\le 4}
\end{align*}

where $m_{234567}-m_{1234568}=0$ and $m_{1235678}\le m_2$, $m_{1234678}\le m_{23}$, $m_{1234567}\le m_{236}$.

\begin{align*}&\mu=\wt_\k(T)=(m_2+m_{23}+m_{236}+m_{2367}+m_{234567},m_{23}+m_{236}+m_{2367}+m_{1235678},\\
&m_{236}+m_{2367}+m_{1234678}, m_{2367}+m_{234567}-m_{1234568}+m_{1234567})\in Par_{\le 4}
\end{align*}

where  $m_{1234568}=m_{234567}+m_{123467}+m_{123567}$ and $m_{1235678}\le m_2$, $m_{1234678}\le m_{23}$, $m_{1234567}\le m_{236}$.


 As for $\Phi^{-1}(T)$,
 the proof goes along the same lines as in the previous theorem. We show  that
 they are $\widehat{\mathfrak{gl}}_{16}$-dominant by verifying that the inequalities \eqref{ineqdom} in Proposition \ref{prop:gineq} (\cite[ Proposition 56]{schumanntorres})  hold, for $1\le i\le 7$, $1\le l\le 8$.
 \end{proof}

\section{Branching rules for the pair (Gl{2n}(C),Sp{2n}(C)) and bijections}\label{sec:last}
Fix $n\in \mathbb{N}$. Let $\lambda \in Par_{\le 2n}$ be a partition with at most $2n$ parts, and $\mu \in Par_{\le n}$ such that $\mu\subset \lambda$.
A partition $\nu\in Par_{\le 2n}$  is said to be \emph{even} if its Young shape has all columns of even length.

A \emph{Littlewood}–\emph{Richardson}-\emph{Sundaram} (LRS) tableau $T$ of skew shape $\lambda/\mu$ in the alphabet $\{1,\dots,2n\}$ and weight the even partition $\nu$ is a skew-tableau of skew shape $\lambda/\mu$ which
has a dominant word of weight $\nu$, and every entry $T(n+i,1)\ge 2i$. The set of all such skew-tableaux is denoted by $LRS_{2n}(\lambda/\mu, \nu)$. Let us write
$$LRS_{2n}(\lambda,\mu):=\bigcup_{\nu \mbox{ even}}  LRS_{2n}(\lambda/\mu,\nu).$$

Let us consider the two following bijections for the Naito-Sagaki conjecture \cite{naitosagaki} where $\lambda\in Par_{\le 2n}$ and $\mu\in Par_{\le n}$ with $\mu\subset\lambda$ are fixed. There is another recent bijection  which does not rely on the two others which is not considered here \cite{muniz}. The first $ \daleth$ is based on the Watanabe quantum branching rule \cite{watanabe}, and  is  a composition of four bijections: $\Phi$ by Naito-Suzuki-Watanabe \cite{nsw}, the inverse quantum LR bijection ${\LRAII^{AII}}^{-1}$ \cite{watanabe,azreco,azslack}, $\pi$ a projection, and the bijection $\lozenge$ \cite[Theorem 1]{azreco}, \cite[Lemma 8.3.2]{watanabe}. That is, $\daleth:=\Phi^{-1}\circ {\LRAII^{AII}}^{-1}\circ \pi^{-1}\circ \lozenge$,

\begin{align} SST^{\widehat{\mathfrak{g}}-dom}_{2n}(\lambda,\mu)\overset{\sim}{\underset{\Phi^{-1}}\longleftarrow } SST^{\mathfrak{k}-hw}_{2n}(\lambda,\mu)  \overset{\sim}{\underset{{\LRAII^{AII}}^{-1}}\longleftarrow} \{S^{H,\mu}\}\times Rec_{2n}(\lambda/\mu)\overset{\sim}{\underset{\pi^{-1}}\longleftarrow } Rec_{2n}(\lambda/\mu)\overset{\sim}{\underset{\lozenge}\longleftarrow } LRS_{2n}(\lambda,\mu):\daleth
\end{align}
and $\daleth LRS_{2n}(\lambda,\mu)=SST^{\widehat{\mathfrak{g}}-dom}_{2n}(\lambda,\mu)$.

The second is by Schumann-Torres \cite[Theorem 35]{schumanntorres}
\begin{align}\phi:\domres(\lambda,\mu)\overset{\sim}\longrightarrow  LRS_{2n}(\lambda,\mu)
\end{align}

We illustrate  $\daleth$ using the Example 36 in \cite{schumanntorres} where $\phi$ is computed,

\begin{ex} Let $n=3$, $\lambda=\omega_1+\omega_2+\omega_3+\omega_4$ and $\mu=\omega_1+\omega_2+\omega_3$. Then

\begin{align*} & LRS_6(\lambda,\mu)=
\{\YT{0.14in}{}{{,,,1},
{,,1},
{,2},
{2},
}, \YT{0.14in}{}{{,,,1},
{,,2},
{,1},
{2},
}, \YT{0.14in}{}{{,,,1},
{,,2},
{,3},
{4},
}\}\overset{\sim}{\underset{\lozenge}\rightarrow }\\
& \overset{\sim}{\underset{\lozenge}\rightarrow }Rec_{6}(\lambda/\mu)=
\{Q_1=\YT{0.14in}{}{{,,,1},
{,,2},
{,1},
{2},
}, Q_2=\YT{0.14in}{}{{,,,1},
{,,1},
{,2},
{2},
}, Q_3= \YT{0.14in}{}{{,,,1},
{,,1},
{,1},
{1},
}\}
\end{align*}

Then
\begin{align*}&\domres(\lambda,\mu)=SST^{\widehat{\mathfrak{g}}-dom}_{6}(\lambda,\mu)=\phi^{-1}LRS_6(\lambda,\mu)\\
&=\{\phi^{-1}(Q_1)=
T_1=\YT{0.15in}{}{{1,1,1,1},
{2,2,2},
{3,\overline 1},
{\overline 2},
},\phi^{-1}(Q_2)= T_2=\YT{0.15in}{}{{1,1,1,1},
{2,2,\overline 1},
{3,3},
{\overline 3},
}, \phi^{-1}(Q_3)=T_3=\YT{0.15in}{}{{1,1,1,1},
{2,2,2},
{3,\overline 2},
{\overline 1},
}\}
\end{align*}

Let $S^{H,\mu}=\YT{0.15in}{}{{2,2,2},
{3,3},
{6},
}\in SpT_6(\mu)$ be the symplectic $\k$-highest weight in $SpT_6(\mu)$, and let
\begin{align}\daleth: SST^{\widehat{\mathfrak{g}}-dom}_{6}(\lambda,\mu)\overset{\sim}{\underset{\Phi^{-1}}\longleftarrow } SST^{\mathfrak{k}-lw}_{6}(\lambda,\mu)  \overset{\sim}{\underset{{\LRAII^{AII}}^{-1}}\longleftarrow} \{S^{H,\mu}\}\times Rec_{6}(\lambda/\mu)\overset{\sim}{\underset{\pi^{-1}}\longleftarrow } Rec_{6}(\lambda/\mu)\overset{\sim}{\underset{\lozenge}\longleftarrow } LRS_{6}(\lambda,\mu)\nonumber
\end{align}

We compute ${\LRAII^{AII}}^{-1}(\{S^{H,\mu}\}\times Rec_{6}(\lambda/\mu))=SST^{\mathfrak{k}-hw}_{6}(\lambda,\mu)$. For ${\LRAII^{AII}}^{-1}(S^{H,\mu}, Q_2)$, one has

\begin{align*}
&\c \circ (\rm{red}_{2}^{-1},\rm{id})\circ (\underset{{(1,2)}}\leftarrow S^{H,\mu}=\YT{0.15in}{}{{2,2,2},
{3,3},
{6},
})
=\c\circ (\rm{red}_{2}^{-1},\rm{id})((2,3), \YT{0.15in}{}{
 {2,2},
 {3},
{6},
})=\YT{0.15in}{}{
 {2,2,2},
 {3,3},
{5,6},
{6},
}
\end{align*}
\begin{align*}
&\c \circ (\rm{red}_{2}^{-1},\rm{id})\circ (\underset{{(3,4)}}\leftarrow \YT{0.15in}{}{
 {2,2,2},
 {3,3},
{5,6},
{6},
})
=\c\circ (\rm{red}_{2}^{-1},\rm{id})((5,6), \YT{0.15in}{}{
 {2,2,2},
 {3,3},
{6},
})=\YT{0.15in}{}{
 {1,2,2,2},
 {2,3,3},
{5,6},
{6},
}=S_2\in SST^{\mathfrak{k}-hw}_{6}(\lambda,\mu)
\end{align*}
Then
$${\LRAII^{AII}}^{-1}(S^{H,\mu}, Q_2)=S_2\in SST^{\mathfrak{k}-hw}_{6}(\lambda,\mu)$$

Now we compute,
$$\Phi^{-1}(S)=\pr^{-1}_{3,4}\circ \pr_{3,5}^{-1}\circ \pr^{-1}_{1,6}(S_2)=\YT{0.15in}{}{
 {1,1,1,1},
 {2,2,6},
{3,3},
{4},
}\in  SST^{\widehat{\mathfrak{g}}-dom}_{6}(\lambda,\mu)$$
\end{ex}
Recalling that $\i=6-i+1$ for $1\le i\le 3$, then
$$\YT{0.15in}{}{
 {1,1,1,1},
 {2,2,6},
{3,3},
{4},
}
=\YT{0.15in}{}{
 {1,1,1,1},
 {2,2,\overline 1},
{3,3},
{\overline 3},
}=T_2=\phi(Q_2).
$$

Then $\Phi^{-1}(S_2)=T_2\in SST^{\widehat{\mathfrak{g}}-dom}_{6}(\lambda,\mu)$ and $\daleth(Q_2)=\phi^{-1}(Q_2)=T_2$.

Similarly,

\begin{align*}&{\LRAII^{AII}}^{-1}(S^{H,\mu}, Q_3)=S_3=\YT{0.15in}{}{
 {1,2,2,2},
 {2,3,3},
{3,6},
{4},
}\in SST^{\mathfrak{k}-hw}_{6}(\lambda,\mu),\\
& \Phi^{-1}(S_3)=\YT{0.15in}{}{
 {1,1,1,1},
 {2,2,2},
{3,5},
{6},
}=\YT{0.15in}{}{
 {1,1,1,1},
 {2,2,2},
{3,\overline 2},
{\overline 1},
}=T_3\in  SST^{\widehat{\mathfrak{g}}-dom}_{6}(\lambda,\mu)\end{align*}
and $\daleth(Q_3)=\phi^{-1}(Q_3)=T_3$.

Lastly, for ${\LRAII^{AII}}^{-1}(S^{H,\mu}, Q_1)$, one has

\begin{align*}
&\c \circ (\rm{red}_{2}^{-1},\rm{id})\circ (\underset{{(1,3)}}\leftarrow S^{H,\mu}=\YT{0.15in}{}{{2,2,2},
{3,3},
{6},
})
=\c\circ (\rm{red}_{2}^{-1},\rm{id})((2,6), \YT{0.15in}{}{
 {2,2},
 {3,3},
})=\YT{0.15in}{}{
 {2,2,2},
 {3,3,3},
{4},
{6},
}
\end{align*}
\begin{align*}
&\c \circ (\rm{red}_{2}^{-1},\rm{id})\circ (\underset{{(2,4)}}\leftarrow \YT{0.15in}{}{
 {2,2,2},
 {3,3,3},
{4},
{6},
})
=\c\circ (\rm{red}_{2}^{-1},\rm{id})((3,6), \YT{0.15in}{}{
 {2,2,2},
 {3,3},
{4},
})=\YT{0.15in}{}{
 {1,2,2,2},
 {2,3,3},
{3,4},
{6},
}=S_1\in SST^{\mathfrak{k}-hw}_{6}(\lambda,\mu)
\end{align*}
Then

\begin{align*}&{\LRAII^{AII}}^{-1}(S^{H,\mu}, Q_1)=S_1=\YT{0.15in}{}{
 {1,2,2,2},
 {2,3,3},
{3,4},
{6},
}\in SST^{\mathfrak{k}-hw}_{6}(\lambda,\mu),\\
& \Phi^{-1}(S_1)=\YT{0.15in}{}{
 {1,1,1,1},
 {2,2,2},
{3,6},
{5},
}=\YT{0.15in}{}{
 {1,1,1,1},
 {2,2,2},
{3,\overline 1},
{\overline 2},
}=T_1\in  SST^{\widehat{\mathfrak{g}}-dom}_{6}(\lambda,\mu)\end{align*}
and $\daleth(Q_1)=\phi^{-1}(Q_1)=T_1$.

\bibliography{sample17}
\bibliographystyle{alpha}
\end{document}